\newcommand{\mylabel}[2]{#2\def\@currentlabel{#2}\label{#1}}
\DeclarePairedDelimiter\ceil{ \lceil}{ \rceil}
\theoremstyle{plain}
\newtheorem{theorem}{Theorem}[section]
\newtheorem{corollary}[theorem]{Corollary}
\newtheorem{lemma}[theorem]{Lemma}
\newtheorem{proposition}[theorem]{Proposition}
\newtheorem{condition}[theorem]{Condition}
\theoremstyle{definition}
\newtheorem{example}[theorem]{Example}
\newtheorem{notation}[theorem]{Notation}
\theoremstyle{remark}
\numberwithin{equation}{section}
\newcommand{\N}{\mathbb{N}}
\newcommand{\R}{\mathbb{R}}
\newcommand{\Z}{\mathbb{Z}}
\newcommand{\p}{\mathbb{P}}
\newcommand{\C}{\mathbb{C}}
\newcommand{\cA}{\mathcal{A}}
\newcommand{\cB}{\mathcal{B}}
\newcommand{\cG}{\mathcal{G}}
\newcommand{\cF}{\mathcal{F}}
\newcommand{\cH}{\mathcal{H}}
\newcommand{\cL}{\mathcal{L}}
\newcommand{\cS}{\mathcal{S}}
\newcommand{\cT}{\mathcal{T}}
\newcommand{\cN}{\mathcal{N}}
\newcommand{\cO}{\mathcal{O}}
\newcommand{\cV}{\mathcal{V}}
\newcommand{\fS}{\mathfrak{S}}
\newcommand{\E}[1]{\mathbb{E}\left [ \, #1 \, \right ]}
\newcommand{\Ec}[1]{\mathbb{E}^*\left [ \, #1 \, \right ]}
\renewcommand{\epsilon}{\varepsilon}
\renewcommand{\phi}{\varphi}
\newcommand{\pspace}{(\Omega,\cA,\p)}
\newcommand{\intd}[1]{\mathrm{d}#1}
\newcommand{\norm}[1]{\left\lVert #1 \right\rVert}
\newcommand{\scalar}[2]{\left\langle #1,#2 \right\rangle}
\newcommand{\argmin}[1]{\operatorname*{arg\,min}_{#1}\,}
\newcommand{\dzn}[1]{d_{\infty}\left( #1 \right) }
\newcommand{\darg}{\,\cdot\,}
\newcommand{\supp}{\text{supp}\,}
\newcommand{\1}[1]{\,\mathds{1}\! \left\{ #1 \right\} }
\begin{document}

\title{Non-parametric Regression for Spatially Dependent Data with Wavelets\textsuperscript{a}
\footnote{\textsuperscript{a}This research was supported by the German Research Foundation (DFG), Grant Number KR-4977/1 and by the Fraunhofer ITWM, 67663 Kaiserslautern, Germany.}
}

\author{
\name{Johannes T. N. Krebs\textsuperscript{b}\thanks{CONTACT: jtkrebs@ucdavis.edu} } 
\affil{\textsuperscript{b}Department of Statistics, University of California, Davis, CA, 95616, USA}
}

\maketitle

\begin{abstract}
We study non-parametric regression estimates for random fields. The data satisfies certain strong mixing conditions and is defined on the regular $N$-dimensional lattice structure. We show consistency and obtain rates of convergence. The rates are optimal modulo a logarithmic factor in some cases. As an application, we estimate the regression function with multidimensional wavelets which are not necessarily isotropic. We simulate random fields on planar graphs with the concept of concliques (cf. \cite{kaiser2012}) in numerical examples of the estimation procedure.
\end{abstract}

\begin{keywords}
Multidimensional wavelets; Non-parametric regression; Random fields; Rates of convergence; Strong spatial mixing conditions
\end{keywords}

\begin{classcode}Primary: 62G08, 62H11, 65T60; Secondary: 65C40, 60G60.\end{classcode}

\section{Introduction}
In this article we study a non-parametric regression model with random design for data which is observed on a spatial structure such as a regular $N$-dimensional lattice or a finite and undirected graph $G=(V,E)$ with a set of nodes $V$ and a set of edges $E$. Consider the random field $(X,Y)=\{ (X(s),Y(s)): s\in \Z^N \} \subseteq \R^d\times\R$. We assume that $(X,Y)$ has equal marginal distributions, e.g., $(X,Y)$ is stationary. Denote the probability distribution of the $X(s)$ by $\mu_X$. The process satisfies the regression model
\begin{align}\label{lsqI}
Y(s) = m( X(s) ) + \varsigma( X(s) ) \, \epsilon(s), \quad s\in \Z^N,
\end{align}
where $m$ and $\varsigma$ are two elements of the function space $L^2(\mu_X)$. The collection of error terms $\epsilon=\{\epsilon(s):s\in \Z^N \}$ is independent of $X$. The $\epsilon(s)$ have mean zero and unit variance. There is a vast literature on non-parametric regression models, see, e.g., \cite{hardle1990applied}, \cite{gyorfi} and \cite{gyorfi2013nonparametric}. A particular choice for the estimation of $m$ and $\varsigma$ are sieve estimators, see \cite{grenander1981abstract}. One class of sieve estimators are neural networks: \cite{hornik1991approximation} investigates approximation properties of multilayer feedforward networks. Rates of $L^2$-convergence for sigmoidal neural networks have been studied by \cite{barron1994approximation} and \cite{mccaffrey1994convergence}. \cite{franke06} use neural networks for modelling financial time series. \cite{kirch2014uniform} model  autoregressive processes by a feedforward neural network.

Another popular choice for the construction of the sieve are wavelets, see \cite{hardle2012wavelets} and \cite{fan1996local}. In this article, we consider the sieve estimator as defined in \cite{gyorfi} and we construct the sieve in applications with general multidimensional wavelets. The wavelet method has already been studied both in the classical i.i.d.\ case and for dependent data in various ways: \cite{donoho1996density} and \cite{donoho1998minimax} use wavelets for univariate density estimation with i.i.d.\ data. \cite{cai1999adaptive} studies block thresholding of the wavelet estimator in the regression model with fixed design. \cite{kerkyacharian2004regression} construct warped wavelets for the random design regression model which admit an orthonormal basis w.r.t.\ the design distribution. \cite{kulik2009wavelet} use warped wavelets in the regression model with dependent data and heteroscedastic error terms. \cite{brown2010nonparametric} study the wavelet method in the context of non-parametric regression estimators for exponential families.

Recently, the analysis of spatial data has gained importance in many applications, e.g., in astronomy, image analysis, environmental sciences or more general in GIS applications. The monographs of \cite{cressie1993statistics} and \cite{kindermann1980markov} offer a detailed introduction to this topic. Non-parametric regression models (with random design) for dependent data are a major tool in spatial statistics. We only mention a few related references: \cite{koul1977behavior}, \cite{roussas1992asymptotic}, \cite{baraud2001adaptive}, \cite{guessoum2010kernel}, \cite{yahia2012nonlinear}, \cite{li2016wavelet}.
 
So far, the kernel method has been popular when considering regression models for spatial data, see, e.g., \cite{carbon2007kernel} or \cite{hallin2004local}. The kernel method is an efficient tool if the design distribution has unbounded support. However, it can have disadvantages if the design distribution is compactly supported. In this case, the results can suffer from a boundary bias. Moreover, the kernel method requires a smooth regression function, e.g., two-times continuous differentiability.

In situations where these requirements are not satisfied, the wavelet method is an alternative which performs relatively well because of its extraordinary adaptability to local irregularities (e.g., jump discontinuities) of the underlying regression function, see also \cite{hall1995formulae} or \cite{gyorfi}. So smoothness conditions are only necessary in a piecewise sense. In particular, (hard thresholding) wavelet estimates can achieve a nearly optimal rate in the minimax sense for a variety of function spaces such as Besov or H{\"o}lder spaces.

However, the wavelet method has received little attention: \cite{li2016nonparametric} studies a wavelet estimator for the non-parametric regression model in the context of spatially dependent data under the assumption that the design distribution of the $X(s)$ is known. In this article, we continue with these ideas but we remove the assumption that the design distribution is known. We transfer the non-parametric regression model of \cite{gyorfi} for i.i.d.\ data to spatially dependent data. The model of \cite{gyorfi} has three important features. Firstly, the regression function $m$ can be any function in $L^2(\mu_X)$. It is not required that $m$ belongs to a certain range of function classes. E.g., other papers in the wavelet context often assume that the regression function belongs to the class of Besov spaces. Secondly, the function classes we construct the estimator from can take a very general form; we could use neural networks instead of multidimensional wavelets. Thirdly, the predicted variables $Y(s)$ are not necessarily bounded and neither the design distribution of the $X(s)$ nor the distribution of the error terms $\epsilon(s)$ needs to admit a density w.r.t.\ the Lebesgue measure. Furthermore, in this paper, we enrich the model with the following novelties. The data is not necessarily i.i.d.\ distributed any more. We prove consistency and derive rates of convergence of the least-squares estimator under strong mixing conditions. We relax the assumptions on the marginal distributions of the random field $(X,Y)$: the design distribution does not have to be known and does not have to admit a density w.r.t.\ the $d$-dimensional Lebesgue measure. The latter condition is assumed for instance in \cite{hallin2004local}. In applications we choose $d$-dimensional wavelets to construct the sieve. These wavelets can take a very general form and do not have to be isotropic.

Moreover, we remove the usual assumption of stationarity: we show that our estimator is consistent if the random field has equal marginal distributions. This is useful in applications to (Markov) random fields defined on irregular graphical networks which do not satisfy the usual definitions of stationarity. A Gaussian random field defined on a finite graph $G=(V,E)$ is such an example. There, the dependency structure of the data is determined by the adjacency matrix of $G$ and is supposed to vanish with an increasing graph-distance. Particular applications we have in mind are data like traffic intensity or road roughness indices on road networks, which may be represented on graphs.

The simulation examples in the present manuscript are constructed with the algorithm of \cite{kaiser2012} and use the concept of concliques. This approach puts us in position to consider our simulation as iterations of an ergodic Markov chain and we achieve a fast convergence of the simulated random field when compared to the Gibbs sampler. We give two simulation examples where we consider one bivariate and one univariate non-parametric linear regression problem on real graphical structures. The results give encouraging prospects in the handling of random fields on graphs.

Altogether, on the one hand, the main contribution of the paper is the generalization of the theory of distribution-free non-parametric regression of \cite{gyorfi} to spatially dependent data. On the other hand, we demonstrate how practical inference on irregular graphs can be performed with the studied estimation technique.

The remainder of this manuscript is organized as follows: we introduce the basic notation in detail in Section~\ref{Section_NonParaRegGeneral}. Besides, we present two general theorems on the consistency and the rate of convergence of the truncated non-parametric linear least-squares estimator. In Section~\ref{Section_NonParaRegWavelets} we use general $d$-dimensional wavelets to construct a consistent estimator of the regression function. Additionally, we derive rates of convergence for this estimator in examples where the regression function satisfies certain smoothness conditions. Section~\ref{ExamplesOfApplication} is devoted to numerical applications: we present simulation concepts for random fields on graphical structures and discuss the developed theory in two examples. Section~\ref{Section_Proofs} contains the proofs of the presented theorems. Appendix~\ref{Section_ExponentialInequalities} consists of useful exponential inequalities for dependent sums. Appendix~\ref{Appendix_Example} contains a deferred verification of an example.

\section{Regression Estimation for Spatially Dependent Data}\label{Section_NonParaRegGeneral}

In this section we present the main results of this article: consistency properties of the proposed estimators and their rates of convergence.

\subsection{Notation and Definitions}
We work on a probability space $\pspace$ that is equipped with a generic random field $Z$. In our application $Z$ will often be the random field $(X,Y)$. So $Z$ is a collection of random variables $\{ Z(s): s\in \Z^N \}$, where $N$ is the lattice dimension. Each $Z(s)$ maps from $\Omega$ to $S$, where $(S,\fS)$ is a measurable space.

The random field is called (strictly) stationary if for each $k \in \N_+$, for all points $s_1,\ldots,s_k \in \Z^N$ and for each translation $t\in \Z^N$, the joint distribution of $\{ Z(s_1+t),\ldots,Z(s_k+t) \}$ coincides with the joint distribution of $\{ Z(s_1),\ldots,Z(s_k) \}$.

Furthermore, if $j\in\N$ and $A\in\R_+$, we write $2^j \simeq A$ if and only if $2^j \le A < 2^{j+1}$.

If $U$ is a random variable on $\pspace$ with values in $[-\infty,\infty]$, we write $\norm{U}_{\p,p}$ for the $p$-norm of $U$ w.r.t.\ $\p$ for $p\in[1,\infty]$, i.e., $\norm{U}_{\p,p}^p = \E{|U|^p}$. Similarly, if $\nu$ is a measure on $(\R^d,\cB(\R^d))$ and $f$ is a real-valued function on $\R^d$, we write $\norm{f}_{L^p(\nu)}$ for the $L^p$-norm of $f$ w.r.t.\ $\nu$ for $p\in[1,\infty]$.

Write $\lambda$ (resp. $\lambda^d$) for the one-dimensional (resp. $d$-dimensional) Lebesgue measure on $(\R,\cB(\R))$ (resp. $(\R^d,\cB(\R^d))$) and denote the space of square integrable Borel functions on $\R^d$ w.r.t.\ the $d$-dimensional Lebesgue measure by $L^2\left(\R^d, \cB(\R^d), \lambda^d \right)$. We sometimes abbreviate it also by $L^2(\lambda^d)$.  

We define the $2$-norm of a square matrix $A=(a_{i,j})_{1\le i,j\le d}\in\R^{d\times d}$ as $\norm{A}_{2} = \sup_{x: \norm{x}_2 = 1} \norm{Ax}_2$, where $\norm{x}_2$ is the Euclidean 2-norm of $x\in\R^d$.

Next, we consider the lattice $\Z^N$. We write $\norm{\,\cdot\,}_{\infty}$ for the maximum norm on $\R^N$ and $d_{\infty}$ for the corresponding metric which can be extended to non-empty subsets $I,J$ of $\Z^N$ via $d_{\infty}(I,J) \coloneqq \min \{ d_{\infty}(s,t): s\in I, t\in J \}$. Additionally, we write $s \le t$ for $s,t\in \R^N$ if and only if the single coordinates satisfy $s_i \le t_i$ for each $1\le i \le N$. The $N$-dimensional vector $(1,\ldots,1)$ is abbreviated by $e_N$.

Let $I$ be a subset of $\Z^N$, the $\sigma$-algebra which is generated by the $Z(s)$ with $s$ in $I$ is denoted by $\cF(I)$. The $\alpha$-mixing coefficient was introduced by \cite{rosenblatt1956central}. \cite{doukhan1991mixing} defines this coefficient for random fields as
\begin{align*}
	\alpha(k) \coloneqq \sup_{ \substack{I, J \subseteq \Z^N ,\\ \dzn{I,J}\ge k}} \sup_{ \substack{ A \in \cF(I),\\ B \in \cF(J) } } \left| \p(A\cap B)-\p(A)\p(B)		\right|, \quad k\in\N.
\end{align*}
A random field is strongly spatial mixing if $\alpha(k)\rightarrow 0$ as $k\rightarrow\infty$. The $\beta$-mixing coefficient was introduced by \cite{kolmogorov1960strong}, it is defined for two sub-$\sigma$-algebras $\cF,\cG$ of $\cA$ as
\begin{align*}
		\beta(\cF,\cG) \coloneqq &\frac{1}{2} \sup \Bigg\{ \sum_{i\in I} \sum_{j\in J} \big|\p(U_i\cap V_j) - \p(U_i)\p(V_j) \big| \,:\\
		&\qquad\qquad\qquad (U_i)_{i\in I}\subseteq \cF, (V_j)_{j\in J} \subseteq \cG \text{ are finite partitions of } \Omega \Bigg\}.
\end{align*}
\cite{doukhan1991mixing} defines the $\beta$-mixing coefficient of the random field $Z$ as
$$
		\beta(k) \coloneqq \sup_{ \substack{I, J \subseteq \Z^N ,\\ \dzn{I,J}\ge k}} \beta( \cF(I),\cF(J)).
$$
The two mixing coefficients feature the relation that $2\alpha(k)\le\beta(k) \le 1$, see \cite{bradley2005basicMixing}. The definition of mixing coefficients for random fields differs from that of time series. The latter definition does not allow to consider interlaced subsets as the definitions of $\alpha(k)$ and $\beta(k)$ do. See also \cite{doukhan1991mixing} for a further discussion and the different properties of mixing time series.

In the following we associate to each $n\in\N_+^N$ an index sets $I_n \coloneqq \{s\in \Z^N: e_N\le s\le n \}$. As we study regression estimates based on data defined on this index set, we need to make precise the asymptotics of the vector $n$. Consider a sequence $(n(k): k\in\N)\subseteq \N^N$ such that
\begin{align}\label{Cond_IndexSet1}
\begin{split}
		 &\min\{ n_i(k):i=1,\ldots,N\} \ge C' \max\{ n_i(k):i=1,\ldots,N\}  \text{ for some } C'\in\R_+\\
		&\qquad\qquad\qquad \text{ and } \max\{ n_i(k+1):i=1,\ldots,N \}  > \max \{ n_i(k):i=1,\ldots,N \}.
\end{split}\end{align}
We say that such a sequence diverges to infinity in each component and write $n\rightarrow \infty$. Moreover, if $(A_{n(k)}:k\in\N)$ is a sequence which is indexed by the sequence $(n(k):k\in\N)$, we also write $A_n$ for this sequence. In particular, we characterize limits for real-valued sequences $A_n$ in this notation, i.e., we agree to write $\lim_{n\rightarrow\infty} A_n$ for $\lim_{k\rightarrow \infty} A_{n(k)}$.

\eqref{Cond_IndexSet1} allows us to proceed at different speeds in each direction, as long as the ratio between the minimum and the maximum does not fall below a certain level. The amendment that the running maximum is strictly increasing ensures that we select sufficiently many data points in the sampling process and guarantees a strongly universally consistent estimator.

We need two regularity conditions to prove the consistency of the sieve estimator. The first condition concerns both the index set on which the data is defined and the distribution of the data. We consider two models ($\alpha$) and ($\beta$):

\begin{condition}\label{regCond0}
$Z = \{Z(s): s\in \Z^N \}$ is an $\R^d$-valued random field for $N, d \in \N_+$ which has equal marginal distributions, i.e., $\cL_{Z(s)	} = \cL_{Z(t) } $ for all $s,t\in \Z^N$. Furthermore,

\begin{enumerate}[label=(\greek*)]

\item the $\alpha$-mixing coefficients decrease exponentially, i.e., $\alpha(k) \le c_0 \exp( -c_1 \, k)$, $k\in\N$ and for certain $c_0, c_1 \in \R_+$. \label{Cond_AlphaMixing}

\item for each pair $(s,t) \in \Z^N\times \Z^N$ the joint distribution of $(Z(s),Z(t))$ is absolutely continuous w.r.t. the product measure $\p_{Z(e_N)}\otimes \p_{Z(e_N)}$ such that the corresponding Radon-Nikod{\'y}m derivatives are uniformly bounded in that
\begin{align}\label{BoundRadonNikodymDerivative}
		\sup_{s,t\in\Z^N} \norm{ \frac{\intd{\p_{(Z(s),Z(t))}}}{\mathrm{d}\left(\p_{Z(e_N)} \otimes \p_{Z(e_N) }  \right)  } }_{\p,\infty} < \infty.
\end{align}
Moreover, the $\beta$-mixing coefficients of $Z$ decrease exponentially, i.e., $\beta(k) \le c_0 \exp( -c_1 \, k)$, $k\in\N$ and for certain $c_0, c_1 \in \R_+$.  \label{Cond_BetaMixing}
\end{enumerate}
\end{condition}

Condition~\ref{regCond0} \ref{Cond_AlphaMixing} is a very weak condition if the regression estimator is expected to be consistent. A usual assumption in this context is stationarity, as in \cite{hallin2004local} or \cite{li2016nonparametric}. However, since we want to cover irregular networks, we need this relaxed assumption because there is no definition of stationarity for random fields on a general (finite) network. Clearly, the dependence within the data has to vanish with increasing distance on the lattice. The decay of the $\alpha$-mixing coefficients is not unusual. One can show that for time series exponentially decreasing $\alpha$-mixing coefficients are guaranteed under mild conditions (\cite{davydov1973mixing}, \cite{Withers1981}).

Condition~\ref{regCond0} \ref{Cond_BetaMixing} implies the first condition. The assumption on the Radon-Nikod{\'y}m derivatives is reasonable as the dependence between the observations vanishes with increasing distance. Note that this condition does not imply that the marginal laws of the $Z(s)$ have to admit a density w.r.t.\ the Lebesgue measure. Assuming this condition, we obtain optimal rates of convergence. The technical reason why $\beta$-mixing ensures optimal rates is that the data $(X(s),Y(s))$ can be coupled with a sample $(X^*(s),Y^*(s))$ which is sufficiently independent, we give more details below. \cite{chen2013optimal} also obtain optimal rates of convergence for regression estimates of time series under a $\beta$-mixing condition.

At this point, it is important to remember the relation of Condition~\ref{regCond0} \ref{Cond_BetaMixing} to $m$-dependence. \cite{bradley1989caution} shows that for random fields $\beta$-mixing in the sense of the above definition and stationarity imply $m$-dependence, see also \cite{doukhan1991mixing}. In the following, we will work only in a single case with a combination of Condition~\ref{regCond0} \ref{Cond_BetaMixing} and stationarity. Especially, we do not need the assumption of stationarity when establishing rates of convergence under $\beta$-mixing. So our results apply indeed to $\beta$-mixing data and not exclusively to $m$-dependent data.

In order to study sieve estimators, we have to quantify the approximability of function classes by a finite collection of functions. For that reason, let $\epsilon>0$ and let $\left( \R^d,\cB(\R^d) \right)$ be endowed with a probability measure $\nu$. Consider a class of real-valued Borel functions $\cG$ on $\R^d$. Every finite collection $g_1,\ldots,g_M$ of Borel functions on $\R^d$ is called an $\epsilon$-cover of size $M$ of $\cG$ w.r.t.\ the $L^p$-norm $\norm{\,\cdot\,}_{L^p(\nu)}$ if for each $g\in\cG$ there is a $j$, $1\le j\le M$, such that $\norm{g-g_j}_{L^p(\nu)}^p = \int_{\R^d} |g-g_j|^p\intd{\nu}  < \epsilon$. The $\epsilon$-covering number of $\cG$ w.r.t.\ $\norm{\,\cdot\,}_{L^p(\nu)}$ is defined as
\begin{align}\label{DefConveringNumber}
		\mathsf{N}\left( \epsilon, \cG, \norm{\,\cdot\,}_{L^p(\nu)} \right) \coloneqq \inf\left\{ M \in \N: \exists\, \epsilon\text{-cover of }\cG\text{ w.r.t.\ }\norm{\,\cdot\,}_{L^p(\nu)} \text{ of size } M \right\}.
\end{align}
$\mathsf{N}$ is monotone, i.e., $\mathsf{N}\left(\epsilon_2, \cG, \norm{\,\cdot\,}_{L^p(\nu)} \right) \le \mathsf{N}\left(\epsilon_1, \cG, \norm{\,\cdot\,}_{L^p(\nu)}\right)$ if $\epsilon_1 \le \epsilon_2$. The covering number can be bounded uniformly over all probability measures under mild regularity conditions, compare the theorem of \cite{haussler1992decision} which is stated as Proposition~\ref{boundCoveringNumber} in the appendix. Since this last proposition involves the technical definition of the Vapnik-Chervonenkis-dimension, we mostly work in the following with a simple covering condition which is satisfied for any class $\cG$ of uniformly bounded functions.

\begin{condition}\label{coveringCondition}
$\cG$ is a class of uniformly bounded, measurable functions $f:\R^d \rightarrow \R$, i.e., there is a $B\in\R_+$ such that $\norm{f}_{\infty} = \sup\{|f(x)|:x\in\R^d\} \le B$ for all $f\in\cG$. Additionally, for all $\epsilon > 0$ and all $M\in\N_+$:
\par
\begingroup
\leftskip=1em
\rightskip=1em
\noindent
for any choice $z_1,\ldots,z_M \in \R^d$ the $\epsilon$-covering number of $\cG$ w.r.t.\ the $L^1$-norm of the discrete measure with point masses $M^{-1}$ in $z_1,\ldots,z_M$ is bounded above by a deterministic function $H_{\cG} (\epsilon)$ depending only on $\epsilon$ and $\cG$, i.e., $\mathsf{N}\left(\epsilon,\cG, \norm{\,\cdot\,}_{L^1(\nu)} \right) \le H_{\cG} (\epsilon)$, where $\nu = M^{-1} \sum_{k=1}^M \delta_{z_k}$.
\par
\endgroup
\end{condition}

The key requirement of Condition~\ref{coveringCondition} is that the covering number (which can be stochastic) admits a deterministic upper bound which only depends on the function class itself and on the parameter $\epsilon$. In particular, Condition~\ref{coveringCondition} is valid for classes of uniformly bounded Sobolev functions or H{\"o}lder continuous functions.

\subsection{The Estimation Procedure}
We assume that the random field $(X,Y)$ satisfies Condition~\ref{regCond0} \ref{Cond_AlphaMixing} or \ref{Cond_BetaMixing}. The $Y(s)$ are real-valued and each pair $(X(s),Y(s))$ satisfies the relation \eqref{lsqI} for each observation location $s\in \Z^N$. The error terms $\epsilon(s)$ are independent of $X$, have mean zero and unit variance. Note that we do not require any specific distribution of the error terms, e.g., a Gaussian distribution is not necessary. 

As in \cite{gyorfi}, let $\cF_{n(k)} \subseteq L^2(\mu_X)$ be deterministic increasing function classes the union of which is dense in $L^2\left(\mu_X\right)$. The function classes are indexed by the sequence from \eqref{Cond_IndexSet1}. In this context, increasing means that $\cF_{n(k)} \subseteq \cF_{n(k+1)}$ for $k\in\N_+$. Preferably, one chooses function classes which satisfy the universal approximation property, i.e., the union of the function spaces $\cF_n$ should be dense in $L^2(\mu)$ for any Borel probability measure $\mu$ on $\R^d$, see also \cite{hornik1991approximation}. This is quite useful because the distribution $\mu_X$ is unknown in practice. We come back to this in more detail in Section~\ref{Section_NonParaRegWavelets}.

The least-squares estimator is defined for a function class $\cF_n$ and a sample $\{(X(s),Y(s)): s\in I_{n}\}$ as
\begin{align}
		m_n \coloneqq \argmin{f\in\cF_n}  |I_{n}|^{-1} \sum_{s\in I_{n}} \big( Y(s) - f(X(s)) \big)^2. \label{lsqII}
\end{align}
Later, we will choose finite-dimensional linear spaces as $\cF_n$, i.e.,
\begin{align}
		\cF_n = \Bigg\{	\sum_{j=1}^{K_n} a_j f_j: f_j\colon\R^d\rightarrow\R, a_j \in \R, j=1,\ldots,K_n		\Bigg\}. \label{linSpaceAsFunctionClass}
\end{align}
Note that the basis functions $f_j$ are ordered, so that $\cF_{n(k)}\subseteq\cF_{n(k+1)}$ if $K_{n(k)} \le K_{n(k+1)}$. Using linear spaces as function classes has the computational advantage that the minimization is an unrestricted ordinary least-squares problem on the domain of the parameters without an additional penalizing term, i.e., the minimizing function in \eqref{lsqII} can be determined by the parameters $(a_1,\ldots,a_{K_n})$ which minimize
$
		|I_{n}|^{-1}  \sum_{s\in I_{n}}  ( Y(s) - \sum_{i=1}^{K_n} a_i f_i( X(s) ) )^2.
$

In examples of application, the parameters $a_i$ are estimated with principal component regression and singular value decomposition. Nevertheless, the subsequent results are derived for more general function classes $\cF_n$. These merely have to satisfy a technical condition on the measurability of the random variables $(X(s)$, $Y(s))$ mapping from the probability space $\pspace$ to $\R^d\times\R$; we indicate this by the writing $\Omega\ni \omega\mapsto (X(s,\omega),Y(s,\omega))$.

In what follows, let $(\rho_{n(k)}: k\in\N_+)$ be a real-valued and positive sequence which tends to infinity. Let $L>0$ and denote the truncation operator by $T_L y \coloneqq \max( \min(y,L), -L)$. Then we define the truncated function classes of $\cF_n$ by $T_{\rho_n} \cF_n \coloneqq \{T_{\rho_n}f : f\in\cF_n \}$. The function classes $\cF_n$, resp. $T_{\rho_n}\cF_n$, must not be too complex in the sense that taking the supremum preserves measurability: to be more precise, we need that the map
\begin{align}\begin{split}\label{supMeas2}
		\Omega \ni \omega \mapsto \sup_{f\in T_{\rho_n} \cF_n} &\bigg| 	 |I_{n} |^{-1 } \sum_{s\in I_{n} } \left|f(X(s,\omega)) - T_L Y(s,\omega)	\right|^2 \\
		& - \E{ \left|f(X(e_N) )- T_LY(e_N)	\right|^2}	\bigg| 
		\end{split}
\end{align}
is $\cA$-$\cB(\R)$-measurable for all $n(k)$ and for all $L>0$. This is necessary to apply exponential inequalities to \eqref{supMeas2}. Finite-dimensional linear spaces satisfy \eqref{supMeas2}, so this condition is satisfied in our applications. In order to obtain a consistent estimator in regions of $\R^d$ with sparse data, we consider the truncated least-squares estimator 
\begin{align}
		\hat{m}_n \coloneqq T_{\rho_n} m_n. \label{lsqIII}
\end{align}

Summing up, the properties of \eqref{lsqIII} are determined by the sample $\{(X(s),Y(s))\colon s\in I_n\}$, by the sequence $\rho_n$ and by the function classes $\cF_n$. In the case of linear spaces the latter are defined in terms of the number of basis functions $K_n$.

\subsection{Consistency and Rate of Convergence}
This subsection contains the main results of Section~\ref{Section_NonParaRegGeneral}. We start with a result on the consistency of the truncated least-squares estimator $\hat{m}_n$ from \eqref{lsqIII}.

\begin{theorem}\label{ConsistencyTruncatedLeastSquaresGeneral}
Let the random field $(X,Y)$ satisfy \eqref{lsqI}. Let the $\cF_n$ be increasing function classes the union of which is dense in $L^2(\mu_X)$ and which fulfil \eqref{supMeas2}. Assume that Condition \ref{coveringCondition} is satisfied for the truncated function classes $T_{\rho_n} \cF_n$ and define
$
	\kappa_{n} (\epsilon,\rho_n) \coloneqq \log H_{T_{\rho_n} \cF_n} \left(\epsilon/(4\rho_n) \right).
$
Assume that $\kappa_n (\epsilon,\rho_n) \rightarrow \infty$ as $n\rightarrow \infty$ in $\N^N$ for each $\epsilon>0$. 

Let Condition \ref{regCond0} \ref{Cond_AlphaMixing} be satisfied. If for each $\epsilon>0$
\begin{align}\label{EqConsistencyTruncatedLeastSquaresGeneral0}
	\kappa_n(\epsilon, \rho_n) \rho_n^4   (\log |I_n|)^2 / |I_n|^{1/N} \rightarrow 0 \text{ as } n \rightarrow \infty,
\end{align}
then the estimate $\hat{m}_n$ is weakly universally consistent, i.e.,
$$
	\lim_{n \rightarrow \infty} \E{ \int_{\R^d} | \hat{m}_n - m |^2 \intd{\mu_X} } = 0.
$$
Moreover, if additionally $(X,Y)$ is stationary and if additionally
\[
		\rho_n^4  \left( \log |I_n| \right)^4   / |I_n|^{1/N }  \rightarrow 0 \text{ as } n \rightarrow \infty,
\]
then $\hat{m}_n$ is strongly universally consistent, i.e., $\lim_{n \rightarrow \infty} \int_{\R^d} | \hat{m}_n - m |^2 \intd{\mu_X}  = 0$ $a.s.$

Let Condition \ref{regCond0} \ref{Cond_BetaMixing} be satisfied. If for each $\epsilon>0$
\begin{align*}
	\kappa_n(\epsilon, \rho_n) \rho_n^4  (\log |I_n|)^N / |I_n| \rightarrow 0 \text{ as } n\rightarrow \infty,
\end{align*}
then the estimate $\hat{m}_n$ is weakly universally consistent. Moreover, if $(X,Y)$ is stationary (and thus $m$-dependent) and if additionally
\[
	  \rho_n^4 (\log |I_n|)^{N+2} / |I_n|\rightarrow 0 \text{ as } n \rightarrow \infty,
\]
then $\hat{m}_n$ is strongly universally consistent.
\end{theorem}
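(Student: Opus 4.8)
The plan is to follow the risk decomposition for truncated least-squares sieve estimators from the i.i.d.\ theory of \cite[Chapter~10]{gyorfi} and to substitute the concentration inequalities used there by exponential inequalities for sums of strongly mixing variables indexed by $\Z^N$, which are provided in Appendix~\ref{Section_ExponentialInequalities}. First I would decompose the risk. Writing $T_{\rho_n}Y(s)$ for the truncated response and noting $\E{Y(e_N)^2} = \norm{m}_{L^2(\mu_X)}^2 + \norm{\varsigma}_{L^2(\mu_X)}^2 < \infty$, the standard argument bounds, up to terms that vanish by dominated convergence,
\[
\int_{\R^d}|\hat m_n - m|^2 \intd{\mu_X} \le a_n + 2\Delta_n + R_n ,
\]
where $a_n := \inf_{f\in\cF_n}\int_{\R^d}|T_{\rho_n}f - m|^2\intd{\mu_X}$ is the approximation error,
\[
\Delta_n := \sup_{f\in T_{\rho_n}\cF_n}\Bigl| |I_n|^{-1}\sum_{s\in I_n}|f(X(s))-T_{\rho_n}Y(s)|^2 - \E{|f(X(e_N))-T_{\rho_n}Y(e_N)|^2}\Bigr| ,
\]
and $R_n$ collects the finitely many auxiliary empirical averages, not depending on $\cF_n$, that arise from truncating $Y$ at level $\rho_n$. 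The term $a_n$ tends to $0$: given $\delta>0$, density of $\bigcup_k\cF_{n(k)}$ in $L^2(\mu_X)$ gives $f^\star\in\cF_{n(k_0)}$ with $\norm{f^\star-m}_{L^2(\mu_X)}<\delta$, and $\rho_n\rightarrow\infty$ forces $\norm{T_{\rho_n}f^\star-f^\star}_{L^2(\mu_X)}\rightarrow0$, hence $\limsup_n a_n\le\delta$; letting $\delta\downarrow0$ shows $a_n\rightarrow0$. It thus remains to control $\Delta_n$ and $R_n$.

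The heart of the argument is the uniform deviation $\Delta_n$. For a fixed realization of $\{X(s):s\in I_n\}$ set $\nu_n := |I_n|^{-1}\sum_{s\in I_n}\delta_{X(s)}$; by Condition~\ref{coveringCondition} applied to $T_{\rho_n}\cF_n$ there is an $(\epsilon/(4\rho_n))$-cover $g_1,\ldots,g_M$ of $T_{\rho_n}\cF_n$ with respect to $\norm{\,\cdot\,}_{L^1(\nu_n)}$ with $M\le H_{T_{\rho_n}\cF_n}(\epsilon/(4\rho_n)) = \exp(\kappa_n(\epsilon,\rho_n))$. Since $y\mapsto(y-c)^2$ is $4\rho_n$-Lipschitz on $[-\rho_n,\rho_n]$, the squared-loss functions $s\mapsto|g(X(s))-T_{\rho_n}Y(s)|^2$ inherit an $\epsilon$-cover of the same cardinality and are bounded by $4\rho_n^2$; this is exactly why $\kappa_n$ is evaluated at $\epsilon/(4\rho_n)$. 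A symmetrization argument as in \cite[Chapter~9]{gyorfi} — which replaces the random, data-dependent cover by the deterministic bound $H_{T_{\rho_n}\cF_n}$ — then yields, for each $\epsilon>0$ and all large $n$,
\[
\p(\Delta_n>\epsilon) \le c_1\exp(\kappa_n(\epsilon,\rho_n))\cdot\max_{g}\,\p\Bigl(\bigl||I_n|^{-1}\sum_{s\in I_n}\bigl(|g(X(s))-T_{\rho_n}Y(s)|^2-\E{|g(X(e_N))-T_{\rho_n}Y(e_N)|^2}\bigr)\bigr|\ge c_2\epsilon\Bigr),
\]
the maximum running over the (deterministically $\le\exp(\kappa_n)$-many) cover functions, and there is an analogous bound for $R_n$ that does not carry the $\exp(\kappa_n)$ factor.

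It then remains to bound the single probability in the display, i.e.\ a tail for a sum over $I_n$ of bounded ($\le4\rho_n^2$) dependent summands with a common mean. Under Condition~\ref{regCond0} \ref{Cond_AlphaMixing}, the spatial-blocking Bernstein-type inequality for $\alpha$-mixing random fields from Appendix~\ref{Section_ExponentialInequalities}, used with a block-gap of order $\log|I_n|$ so that the exponentially decaying mixing coefficients become negligible, yields a tail of the form $c_3\exp(-c_4\,\epsilon^2\,|I_n|^{1/N}/(\rho_n^4(\log|I_n|)^2))$; multiplying by $\exp(\kappa_n)$, and absorbing the polynomial prefactor obtained when integrating the tail against the deterministic envelope $\Delta_n\le4\rho_n^2$, one gets $\p(\Delta_n>\epsilon)\rightarrow0$ and $\E{\int_{\R^d}|\hat m_n-m|^2\intd{\mu_X}}\rightarrow0$ exactly under $\kappa_n(\epsilon,\rho_n)\rho_n^4(\log|I_n|)^2/|I_n|^{1/N}\rightarrow0$, i.e.\ \eqref{EqConsistencyTruncatedLeastSquaresGeneral0}, with $R_n$ handled the same way. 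Under Condition~\ref{regCond0} \ref{Cond_BetaMixing} I would instead use the $\beta$-mixing inequality obtained by coupling the field on spatial blocks with an independent field — the coupling error being $\bigO{|I_n|\,\beta(\log|I_n|)} = o(1)$ by exponential decay of $\beta$ — and then applying Bernstein's inequality to the of order $|I_n|/(\log|I_n|)^N$ essentially independent block sums, which gives a tail $c_3\exp(-c_4\epsilon^2|I_n|/(\rho_n^4(\log|I_n|)^N))$ and hence weak consistency under $\kappa_n\rho_n^4(\log|I_n|)^N/|I_n|\rightarrow0$.

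For strong universal consistency one applies the Borel--Cantelli lemma along the subsequence $(n(k))$. By \eqref{Cond_IndexSet1} the running maximum $\max_i n_i(k)$ is a strictly increasing sequence of positive integers, so $|I_{n(k)}|\rightarrow\infty$ with $\log|I_{n(k)}|\gtrsim\log k$; under the stronger hypothesis — $\rho_n^4(\log|I_n|)^4/|I_n|^{1/N}\rightarrow0$ in the $\alpha$-mixing case, $\rho_n^4(\log|I_n|)^{N+2}/|I_n|\rightarrow0$ in the $\beta$-mixing case — the exponents above are eventually dominated by $-c(\log|I_{n(k)}|)^2$, which makes $\sum_k\p(\Delta_{n(k)}>\epsilon)<\infty$ and the corresponding sum for $R_{n(k)}$ finite, the latter being precisely what this extra hypothesis (which carries no $\kappa_n$, since it bounds the $Y$-truncation remainder) secures; Borel--Cantelli then gives $\Delta_{n(k)}\rightarrow0$ and $R_{n(k)}\rightarrow0$ almost surely, and together with $a_{n(k)}\rightarrow0$ the claim follows. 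Stationarity is used at this point through the sharper form of the exponential inequality of Appendix~\ref{Section_ExponentialInequalities} that it permits. The step I expect to be the \emph{main obstacle} is this uniform deviation bound: one must pass from the random, data-dependent cover to the deterministic $H_{T_{\rho_n}\cF_n}$ by symmetrization while tracking the dependence on $\rho_n$, and then invoke the correct lattice exponential inequality — the delicate point being the $\beta$-mixing coupling on spatial blocks, which has to simultaneously render the coupling error negligible and retain of order $|I_n|$ essentially independent observations, thereby upgrading the denominator from $|I_n|^{1/N}$ to $|I_n|$.
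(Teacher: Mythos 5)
Your proposal follows essentially the same route as the paper: the risk is decomposed into an approximation error, a uniform deviation over $T_{\rho_n}\cF_n$, and a truncation remainder (the paper packages this as Proposition~\ref{propConsNReg}, a modified version of Theorem~10.2 of \cite{gyorfi}); the uniform deviation is controlled by the $L^1$-cover of Condition~\ref{coveringCondition} with the $\epsilon/(4\rho_n)$ Lipschitz rescaling, symmetrization, and then a Bernstein-type inequality for $\alpha$-mixing lattice fields (Theorem~\ref{USLLNM}) in the first case and a Berbee-type coupling on spatial blocks (Lemma~\ref{CouplingBetaMixing}) followed by Theorem~9.1 of \cite{gyorfi} in the second; strong consistency comes from Borel--Cantelli along $(n(k))$ using $\log|I_{n(k)}|\gtrsim\log k$. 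All of this matches the paper's proof.

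There is, however, one concrete gap: your treatment of the remainder $R_n$, i.e.\ of the almost-sure convergence of $|I_n|^{-1}\sum_{s\in I_n}|Y(s)-T_LY(s)|^2$ to $\E{|Y(e_N)-T_LY(e_N)|^2}$. You claim this term is ``handled the same way'' by the exponential tail bounds and that the extra hypothesis $\rho_n^4(\log|I_n|)^4/|I_n|^{1/N}\rightarrow0$ is there to make the corresponding Borel--Cantelli sum finite. That step would fail as stated: the summands $|Y(s)-T_LY(s)|^2$ are \emph{unbounded} (only $\E{Y(e_N)^2}<\infty$ is assumed), so neither Proposition~\ref{applBernstein} nor Hoeffding applies to them, and no exponential tail is available. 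The paper instead invokes an ergodic theorem for stationary, strongly mixing random fields (Theorem~B.4 of \cite{krebs2017orthogonal}) to get this a.s.\ limit --- and this is precisely where the stationarity hypothesis is consumed, not, as you suggest, through ``a sharper form of the exponential inequality.'' Correspondingly, the extra growth condition in the strong-consistency statement is needed to make the tail of the \emph{uniform deviation} $\Delta_{n(k)}$ summable (the $\kappa_n$ factor drops out there because \eqref{EqConsistencyTruncatedLeastSquaresGeneral0} already forces $\kappa_n$ to be negligible against the exponent), not to control $R_n$. With the ergodic-theorem step supplied, the rest of your argument goes through as in the paper.
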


The growth rates of the truncation sequence and of the covering number are upper bounds which guarantee a consistent estimator. We see that the conditions in the case of $\alpha$-mixing data are more restrictive than in the case of $\beta$-mixing data. In the first case the growth in $\rho_n^4$ times the logarithm of the covering number, $\kappa_n(\epsilon,\rho_n)$, have to be overcompensated by the $N$-th root of the sample size, $|I_n|^{1/N}$, for a weakly universally consistent estimator (modulo a logarithmic factor). In the second case of $\beta$-mixing data, the sample size $|I_n|$ is not corrected by the exponent $1/N$. This last result corresponds to the classical case of i.i.d.\ data, see \cite{gyorfi}. We will see this analogy between i.i.d.\ and $\beta$-mixing data again below. Moreover in both dependence settings, we need an additional growth condition which ensures a strongly universally consistent estimate. Note that in the case where Condition \ref{regCond0} \ref{Cond_BetaMixing} is satisfied and $(X,Y)$ is stationary, $(X,Y)$ is indeed $m$-dependent with the result of \cite{bradley1989caution}. Hence, the last statement in Theorem~\ref{ConsistencyTruncatedLeastSquaresGeneral} is actually achieved under $m$-dependence.

In the next corollary, we give an application to the linear spaces from \eqref{linSpaceAsFunctionClass}. In this case, we can compute an upper bound for the covering number with Proposition~\ref{boundCoveringNumber}. This corollary is also a generalized result of \cite{gyorfi} Theorem 10.3.

\begin{corollary}\label{ConsistencyTruncatedLeastSquaresGeneral_Corollary}
Let $\cF_n$ be the linear span of continuous and linearly independent functions $f_1,\ldots,f_{K_n}$ as in \eqref{linSpaceAsFunctionClass} such that $\cup_{k\in\N_+} \cF_{n(k)}$ is dense in $L^2(\mu_X)$. Assume Condition~\ref{regCond0} \ref{Cond_AlphaMixing}. $\hat{m}_n$ is weakly universally consistent if
$
	\lim_{n\rightarrow\infty} K_n \, \rho_n^4 \, \log \rho_n \,(\log |I_n|)^2 \,/  |I_n|^{1/N} = 0$. $\hat{m}_n$ is strongly universally consistent if additionally $(X,Y)$ is stationary and if additionally $
	\lim_{n\rightarrow\infty}\rho_n^4 \, (\log |I_n|)^{4} \,/ |I_n|^{1/N}  =0$.

Assume Condition~\ref{regCond0} \ref{Cond_BetaMixing}. If
$	\lim_{n\rightarrow\infty}K_n \, \rho_n^4 \, \log \rho_n \,(\log |I_n|)^N \,/  |I_n| = 0$, the estimate $\hat{m}_n$ is weakly universally consistent. $\hat{m}_n$ is strongly universally consistent if additionally $(X,Y)$ is stationary and if additionally $
	\lim_{n\rightarrow\infty}\rho_n^4 \, (\log |I_n|)^{N+2} \,/ |I_n| =0 
$.
\end{corollary}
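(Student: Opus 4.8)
The plan is to apply Theorem~\ref{ConsistencyTruncatedLeastSquaresGeneral} to the linear spaces $\cF_n$ of \eqref{linSpaceAsFunctionClass} and their truncations $T_{\rho_n}\cF_n$; all but one of the hypotheses are immediate, and the only real work is to exhibit an admissible deterministic bound $H_{T_{\rho_n}\cF_n}$ for the covering numbers and then to estimate $\kappa_n(\epsilon,\rho_n)=\log H_{T_{\rho_n}\cF_n}(\epsilon/(4\rho_n))$. The density of $\cup_{k}\cF_{n(k)}$ in $L^2(\mu_X)$ is assumed; the inclusions $\cF_{n(k)}\subseteq\cF_{n(k+1)}$ follow from the ordering of the basis functions $f_1,\ldots,f_{K_n}$ (with $K_n$ nondecreasing, as implicit in \eqref{linSpaceAsFunctionClass}); every $f\in T_{\rho_n}\cF_n$ satisfies $\norm{f}_\infty\le\rho_n$, so Condition~\ref{coveringCondition} will hold with $B=\rho_n$ once the bound $H_{T_{\rho_n}\cF_n}$ is produced; and the measurability requirement \eqref{supMeas2} holds for finite-dimensional linear spaces, as noted after its statement.

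For the covering number we use Proposition~\ref{boundCoveringNumber}. The truncation $y\mapsto T_{\rho_n}y$ is nondecreasing, so the subgraphs of $T_{\rho_n}\cF_n$ differ from those of $\cF_n$ only by a fixed modification and the two subgraph classes have the same Vapnik--Chervonenkis dimension; since $\cF_n$ is a linear space of dimension $K_n$, this dimension is at most $K_n+1$ by a standard argument (see, e.g., \cite{gyorfi}). Hence Proposition~\ref{boundCoveringNumber} gives, uniformly over all probability measures $\nu$ on $(\R^d,\cB(\R^d))$,
\[
\mathsf{N}\!\left(\epsilon, T_{\rho_n}\cF_n, \norm{\,\cdot\,}_{L^1(\nu)}\right)\le 3\left(\frac{2e\rho_n}{\epsilon}\,\log\frac{3e\rho_n}{\epsilon}\right)^{K_n+1}=:H_{T_{\rho_n}\cF_n}(\epsilon),
\]
which verifies Condition~\ref{coveringCondition}. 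Evaluating the logarithm at $\epsilon/(4\rho_n)$,
\[
\kappa_n(\epsilon,\rho_n)=\log H_{T_{\rho_n}\cF_n}\!\left(\epsilon/(4\rho_n)\right)\le \log 3+(K_n+1)\left[\log\frac{8e\rho_n^2}{\epsilon}+\log\log\frac{12e\rho_n^2}{\epsilon}\right],
\]
so that, using $K_n\ge1$ and $\rho_n\to\infty$, there is a finite constant $C(\epsilon)$ with $\kappa_n(\epsilon,\rho_n)\le C(\epsilon)\,K_n\log\rho_n$ for all large $n$; in particular $\kappa_n(\epsilon,\rho_n)\to\infty$ for each $\epsilon>0$, as required by Theorem~\ref{ConsistencyTruncatedLeastSquaresGeneral}.

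It remains to feed this estimate into the conditions of Theorem~\ref{ConsistencyTruncatedLeastSquaresGeneral}. Under Condition~\ref{regCond0}\ref{Cond_AlphaMixing}, the corollary's hypothesis $K_n\rho_n^4\log\rho_n\,(\log|I_n|)^2/|I_n|^{1/N}\to0$, combined with $\kappa_n(\epsilon,\rho_n)\le C(\epsilon)K_n\log\rho_n$, yields \eqref{EqConsistencyTruncatedLeastSquaresGeneral0}, hence weak universal consistency; adding stationarity together with $\rho_n^4(\log|I_n|)^4/|I_n|^{1/N}\to0$ — verbatim the extra hypothesis of the theorem — gives strong universal consistency. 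Under Condition~\ref{regCond0}\ref{Cond_BetaMixing} the reasoning is identical: $K_n\rho_n^4\log\rho_n\,(\log|I_n|)^N/|I_n|\to0$ forces $\kappa_n(\epsilon,\rho_n)\rho_n^4(\log|I_n|)^N/|I_n|\to0$, giving weak consistency, and the additional $\rho_n^4(\log|I_n|)^{N+2}/|I_n|\to0$ together with stationarity — which by \cite{bradley1989caution} entails $m$-dependence — gives strong consistency. The only genuine obstacle is the clean VC/covering estimate for $T_{\rho_n}\cF_n$ and the bookkeeping of the $\epsilon$-dependent constant $C(\epsilon)$, so that for each fixed $\epsilon>0$ the growth conditions of the corollary do imply those of the theorem.
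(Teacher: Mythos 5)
Your proposal follows essentially the same route as the paper: verify Condition~\ref{coveringCondition} for $T_{\rho_n}\cF_n$ via Proposition~\ref{boundCoveringNumber} with $\cV_{(T_{\rho_n}\cF_n)^+}\le K_n+1$, deduce $\kappa_n(\epsilon,\rho_n)=\cO(K_n\log\rho_n)$, and feed this into Theorem~\ref{ConsistencyTruncatedLeastSquaresGeneral}; the asymptotic bookkeeping is correct. The one hypothesis of Proposition~\ref{boundCoveringNumber} you leave unchecked is $\cV_{(T_{\rho_n}\cF_n)^+}\ge 2$, which the paper establishes by an explicit shattering argument using the linear independence and continuity of $f_1,f_2$ (this is also where the corollary's continuity assumption is actually used); aside from that, and the harmless constant slip $b-a=2\rho_n$ rather than $\rho_n$ in Haussler's bound, your argument matches the paper's.
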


One usually chooses a truncation sequence $\rho_n$ growing at a rate of $\cO(\log |I_n|)$ which is negligible, e.g., see \cite{kohler2003nonlinear} who considers piecewise polynomials as basis functions in the case of i.i.d.\ data.

The next result gives the rate of convergence of the truncated least-squares estimator $\hat{m}_n$ in both dependence scenarios. This rate can be divided into an empirical error which depends on the realization $\omega \in \Omega$ and an approximation error which relates the regression function $m$ to its projection onto the function classes $\cF_n$.

However, in order to derive a rate of convergence result, we need an additional requirement on the error terms because we do not rule out conditional dependence between two distinct observations $Y(s)$ and $Y(t)$. Thus, we need a condition on the conditional covariance matrix of the observations $Y(s)$ given the observations $X(s)$. We denote this matrix by $\operatorname{Cov}( Y(I_{n}) \,|\, X(I_{n}) ) $. Note that in the special case with uncorrelated error terms $\epsilon(s)$, $\operatorname{Cov}( Y(I_{n}) \,|\, X(I_{n}) )$ is a diagonal matrix and it is sufficient to impose a restriction on the conditional variances.

\begin{theorem} \label{nonParaRegRateOfConvergence}
Assume that the regression function and the conditional variance function are essentially bounded, i.e., $\norm{m}_{\infty}, \norm{ \varsigma}_{\infty}\le L$. If the error terms $\epsilon(s)$ are correlated, assume that $\E{ |\epsilon(e_N) |^{2+\gamma} } < \infty$ for some $\gamma >0$. The function classes $\cF_n$ are linear spaces as in \eqref{linSpaceAsFunctionClass}.

If Condition~\ref{regCond0} \ref{Cond_AlphaMixing} is satisfied, assume
$
		K_n \log(|I_n|) / |I_n|^{1/(2N)} \rightarrow 0 \text{ as } n \rightarrow \infty.
$
Then there is a $C\in\R_+$ such that
\begin{align*}
	 \E{ \int_{\R^d} \left| \hat{m}_n - m \right|^2 \intd{\mu_X} } \le 8 \inf_{f \in \cF_n} \int_{\R^d} \left|	f - m	\right|^2   \intd{\mu_X} + C \frac{ K_n \log |I_n|  }{ |I_n|^{1/(2N) } }.
\end{align*}

If Condition~\ref{regCond0} \ref{Cond_BetaMixing} is satisfied, assume
$
		K_n (\log |I_n| )^{N+2} / |I_n| \rightarrow 0 \text{ as } n \rightarrow \infty.
$
Then there is a $C\in\R_+$ such that
\begin{align*}
	 \E{ \int_{\R^d} \left| \hat{m}_n - m \right|^2 \intd{\mu_X} } \le 16 \inf_{f \in \cF_n} \int_{\R^d} \left|	f - m	\right|^2   \intd{\mu_X} + C \frac{ K_n (\log |I_n| )^{N+2} }{ |I_n| }.
\end{align*}
\end{theorem}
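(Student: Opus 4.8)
The plan is to transport the analysis of least-squares estimates on finite-dimensional linear spaces (\cite{gyorfi}, Chapter~11) to the two mixing regimes. I may assume $\rho_n\ge L$, so that $T_{\rho_n}m=m$; if instead $\rho_n$ is allowed to grow polynomially in $|I_n|$ the only price is a factor $\log\rho_n=O(\log|I_n|)$ which the stated rates absorb. Write $\xi(s):=\varsigma(X(s))\,\epsilon(s)$, $\norm{g}_n^2:=|I_n|^{-1}\sum_{s\in I_n}g(X(s))^2$ and $\norm{g}^2:=\int g^2\intd{\mu_X}$, and let $\Pi_n$ be the orthogonal projection onto $\cF_n$ in $\norm{\cdot}_n$. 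Two elementary facts drive the argument. First, truncation is a contraction, hence $|\hat m_n-m|\le|m_n-m|$ pointwise. Second, from the minimizing property $|I_n|^{-1}\sum_s(Y(s)-m_n(X(s)))^2\le|I_n|^{-1}\sum_s(Y(s)-f(X(s)))^2$, expanding the squares and cancelling the $\xi(s)^2$ terms gives, for every $f\in\cF_n$,
\begin{align*}
\norm{m_n-m}_n^2 \;\le\; \norm{f-m}_n^2 + \frac{2}{|I_n|}\sum_{s\in I_n}\xi(s)\big(m_n(X(s))-f(X(s))\big).
\end{align*}
Since $m_n-f\in\cF_n$, the last term equals $2\langle\xi,m_n-f\rangle_n\le 2\norm{\Pi_n\xi}_n\big(\norm{m_n-m}_n+\norm{m-f}_n\big)$, and $2ab\le\tfrac14a^2+4b^2$ turns the display into $\norm{\hat m_n-m}_n^2\le 2\norm{f-m}_n^2+C\norm{\Pi_n\xi}_n^2$ for every $f\in\cF_n$.

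It remains to control the two terms on the right. The noise projection obeys $\E{\norm{\Pi_n\xi}_n^2\,|\,X(I_n)}=|I_n|^{-1}\operatorname{tr}\big(\Pi_n\operatorname{Cov}(Y(I_n)\,|\,X(I_n))\Pi_n\big)\le K_n\norm{\operatorname{Cov}(Y(I_n)\,|\,X(I_n))}_2/|I_n|$, and the operator norm is $O(1)$: for uncorrelated $\epsilon(s)$ the conditional covariance is diagonal with entries $\le\norm{\varsigma}_\infty^2$, and in the correlated case each row sum is bounded using $\E{|\epsilon(e_N)|^{2+\gamma}}<\infty$, the Davydov covariance inequality and the exponential decay of the mixing coefficients; hence $\E{\norm{\Pi_n\xi}_n^2}\le CK_n/|I_n|$. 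For the approximation term one must pass from $\norm{\cdot}_n$ to $\norm{\cdot}$: it suffices to show, with $\delta_n\to0$, that $\norm{\hat m_n-m}^2\le 2\norm{\hat m_n-m}_n^2+\delta_n$ (valid since $\hat m_n\in T_{\rho_n}\cF_n$) and that $\norm{f_n-m}_n^2\le 2\norm{f_n-m}^2+\delta_n$ for a near-minimizer $f_n\in\cF_n$ of $\norm{\cdot-m}$; substituting, optimizing over $f_n$ and taking expectations then yields an oracle inequality with remainder of order $\delta_n+K_n/|I_n|$.

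This uniform transfer is the heart of the proof, and the two cases diverge here. By Proposition~\ref{boundCoveringNumber}, $T_{\rho_n}\cF_n$ has $L^1(\nu)$-covering number of order $(C\rho_n/\epsilon)^{cK_n}$ uniformly in $\nu$ (so Condition~\ref{coveringCondition} holds with $\log$-covering number $O(K_n\log(\rho_n/\epsilon))$ for the relevant squared class, squaring a bounded class being Lipschitz); combining a union bound over an $\epsilon$-net with the exponential inequalities of Appendix~\ref{Section_ExponentialInequalities} gives the transfer. Under Condition~\ref{regCond0}\ref{Cond_BetaMixing} I would use a Berbee-type coupling: partition $I_n$ into blocks of side $\asymp\log|I_n|$ separated by gaps of the same order, replace the data on well-separated blocks by an independent family at total-variation cost $O(|I_n|\,\beta(\log|I_n|))=o(1)$, and apply the classical i.i.d.\ linear-least-squares estimates of \cite{gyorfi} to the $\asymp|I_n|/(\log|I_n|)^N$ blocks, the leftover pairwise second-order terms being handled with the Radon-Nikod{\'y}m bound \eqref{BoundRadonNikodymDerivative}; this gives $\delta_n\asymp K_n(\log|I_n|)^{N+2}/|I_n|$ and the constant $16$ (the extra factor $2$ over the i.i.d.\ constant $8$ coming from the coupling/alternating-block step). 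Under Condition~\ref{regCond0}\ref{Cond_AlphaMixing} no coupling to independence is available, so the covering bound is fed directly into the $\alpha$-mixing exponential inequality, whose large-deviation regime on the $N$-dimensional lattice is governed by the side length $|I_n|^{1/N}$ rather than by $|I_n|$; after the (lossy) bookkeeping this produces $\delta_n\asymp K_n\log|I_n|/|I_n|^{1/(2N)}$ and the constant $8$.

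The hypotheses $K_n\log|I_n|/|I_n|^{1/(2N)}\to0$, resp.\ $K_n(\log|I_n|)^{N+2}/|I_n|\to0$, are exactly what forces $\delta_n\to0$; assembling the basic inequality, the bound $\E{\norm{\Pi_n\xi}_n^2}\le CK_n/|I_n|$ and the uniform transfer, then taking expectations, gives the two stated inequalities. The step I expect to be the main obstacle is precisely this uniform empirical-to-population comparison over a linear class driven by unbounded noise under dependence: making the exponential inequalities of Appendix~\ref{Section_ExponentialInequalities} interact cleanly with the $O(K_n)$-dimensional covering number and, under $\alpha$-mixing, with the spatial blocking that pins the effective sample size at $|I_n|^{1/N}$, all while tracking the constants $8$ and $16$.
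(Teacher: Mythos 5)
Your proposal follows essentially the same route as the paper's proof: the basic least-squares/projection inequality with the noise term controlled by $\operatorname{tr}(\Pi_n\operatorname{Cov}(Y(I_n)\,|\,X(I_n))\Pi_n)/|I_n|\le K_n\norm{\operatorname{Cov}}_2/|I_n|$ and the operator norm bounded via Davydov's inequality under exponential mixing (this is exactly the paper's appeal to Theorem~11.1 of \cite{gyorfi}), followed by the empirical-to-population norm comparison via covering numbers fed into the $\alpha$-mixing exponential inequality with effective sample size $|I_n|^{1/N}$ in the first case, and via Berbee coupling on blocks of side $\asymp\log|I_n|$ (Lemma~\ref{CouplingBetaMixing} and Proposition~\ref{empNormBetaMixing}) in the second, with the extra factor $2$ in the constant coming from the coupling split. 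The only cosmetic differences are that the paper cites the Györfi oracle inequality rather than rederiving it and phrases the transfer step as a one-sided bound on $\max(\norm{\cdot}-2\norm{\cdot}_{I_n},0)$ integrated over its tail, choosing $v=K_n\log|I_n|/|I_n|^{1/(2N)}$, resp. $v=K_n(\log|I_n|)^{N+2}/|I_n|$.
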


The first term appearing on the right-hand side of both inequalities is a multiple of the the approximation error which depends on the function class $\cF_n$ and the (unknown) function $m$. The second term is the estimation error. The number of basis functions $K_n$ has a linear influence on this error. This influence is negative because if $K_n$ increases, more parameters need to be estimated. Conversely, a growing sample size reduces this error. In the case of $\beta$-mixing data, an increasing sample size reduces the error more than in the case of $\alpha$-mixing data.

The boundedness of the functions $m$ and $\varsigma$ and the assumption that we know this bound are essential to derive rates of convergence, see \cite{gyorfi} for more details. However, note that we do not assume the error terms to be bounded. We only require a moment condition, i.e., $\E{ |\epsilon(e_N) |^{2+\gamma} } < \infty$ for some $\gamma>0$. This is not unexpected if we want to bound the summed conditional covariances in our model from \eqref{lsqI} which has the multiplicative heteroscedastic structure.

In the case of linear function spaces, \cite{gyorfi} find that the estimation error can be bounded by $K_k (\log k ) / k $ times a constant under similar assumptions for the case of an i.i.d.\ sample of size $k$. This guarantees an optimal rate of convergence in terms of \cite{stone1982optimal} up to a logarithmic factor. We see that for $\beta$-mixing data our result guarantees the same rate also up to a logarithmic factor. We discuss this in detail in Section~\ref{Section_NonParaRegWavelets} below.

\section{Linear Wavelet Regression with Spatially Dependent Data}\label{Section_NonParaRegWavelets}

In this section we consider an adaptive wavelet estimate of the regression function $m$.

\subsection{Preliminaries}
A detailed introduction to the properties of wavelets, in particular the construction of wavelets with compact support, can be found in \cite{meyer1995wavelets} and \cite{daubechies1992ten}. Since we consider $d$-dimensional data, we give a short review on important concepts of wavelets in $d$ dimensions  indexed by the lattice $\Z^d$. The definitions are taken from \cite{benedetto1993wavelets}. In the following, $M \in \R^{d\times d}$ is a matrix which preserves the lattice, i.e., $M \Z^d \subseteq \Z^d$. Moreover, $M$ is strictly expanding in that all eigenvalues $\zeta$ of $M$ satisfy $|\zeta| > 1$. Denote the absolute value of the determinant of $M$ by $|M|$.

A multiresolution analysis (MRA) of $L^2\left(\lambda^d \right)$ with a scaling function $\Phi:\R^d\rightarrow\R$ is an increasing sequence of subspaces $\ldots \subseteq U_{-1} \subseteq U_0 \subseteq U_1 \subseteq \ldots $ such that the following four conditions are satisfied
\begin{enumerate}[label=\textnormal{(\arabic*)}]
\item (Denseness) $\bigcup_{j \in \Z} U_j$ is dense in $L^2\left(\lambda^d \right)$,
\item (Separation) $\bigcap_{j \in \Z} U_j = \{0\}$,
\item (Scaling) $ f\in U_j $ if and only if $f( M^{-j} \,\cdot\,) \in U_0$,
\item (Orthonormality) $\{ \Phi(\,\cdot\, - \gamma): \gamma \in \Z^d \}$ is an orthonormal basis of $U_0$.
\end{enumerate}

The relationship between an MRA and an orthonormal basis of $L^2(\lambda^d)$ is summarized in the next theorem:

\begin{theorem}[Theorem 1.7 of \cite{benedetto1993wavelets}]\label{BenedettoTheorem}
Suppose $\Phi$ generates a multiresolution analysis and the $a_k(\gamma)$ satisfy for all $0\le j,k \le |M|-1$ and $\gamma\in \Z^d$ the equations
\begin{align*}
	\sum_{\gamma'\in \Z^d} a_j(\gamma')\, a_k(M\gamma + \gamma') = |M|\, \delta(j,k)\, \delta(\gamma,0) \quad\text{ and }\quad \sum_{\gamma\in\Z^d} a_0(\gamma) = |M|.
\end{align*}
Furthermore, define the functions $\Psi_k \coloneqq \sum_{\gamma\in\Z^d} a_k(\gamma)\, \Phi(M\,\cdot\, - \gamma)$ for $k=1,...,|M|-1$. Then the set of functions $\{ |M|^{j/2} \Psi_k( M^j\,\cdot\,-\gamma): j\in\Z, k=1,\ldots,|M|-1, \gamma\in\Z^d \}$ forms an orthonormal basis of $L^2(\lambda^d)$:
\begin{align}\begin{split}\label{ONBWavelets}
		&L^2(\lambda^d) = U_0 \oplus \bigoplus_{j\in\N} W_j = \bigoplus_{j\in\Z} W_j, \\
		&\qquad\text{ where } W_j \coloneqq \langle\, |M|^{j/2} \Psi_k(M^j\,\cdot\, - \gamma): k=1,\ldots,|M|-1,\gamma\in\Z^d \, \rangle.
\end{split}\end{align}
\end{theorem}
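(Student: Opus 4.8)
\emph{Proof proposal.} This is the classical MRA construction; the plan is to reduce the statement to scale zero by a telescoping argument, settle the scale-zero case via the Fourier transform and the filter identities, and then transport the conclusion to all scales using the dilation structure of the MRA. First I would set $W_j:=U_{j+1}\ominus U_j$ (orthogonal complement), so that $U_{j+1}=U_j\oplus W_j$. Writing $P_j$ for the orthogonal projection onto $U_j$, Denseness gives $P_j\to\mathrm{Id}$ strongly as $j\to+\infty$ and Separation gives $P_j\to0$ strongly as $j\to-\infty$; telescoping $U_m=U_{-m}\oplus\bigoplus_{-m\le j<m}W_j$ and letting $m\to\infty$ yields the orthogonal decompositions $L^2(\lambda^d)=\bigoplus_{j\in\Z}W_j$ and, since $U_0=\overline{\bigoplus_{j<0}W_j}$, also $L^2(\lambda^d)=U_0\oplus\bigoplus_{j\ge0}W_j$. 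Next I would introduce the unitary dilation $Df:=|M|^{1/2}f(M\,\cdot\,)$; the Scaling axiom gives $DU_j=U_{j+1}$, hence $DW_j=W_{j+1}$ and $W_j=D^jW_0$, and $D^j$ carries the family $\{\Psi_k(\,\cdot\,-\gamma)\}_{\gamma\in\Z^d}$ onto $\{|M|^{j/2}\Psi_k(M^j\,\cdot\,-\gamma)\}_{\gamma\in\Z^d}$. Thus it suffices to show that $\{\Psi_k(\,\cdot\,-\gamma):1\le k\le|M|-1,\ \gamma\in\Z^d\}$ is an orthonormal basis of $W_0$: the full system is then a union over $j\in\Z$ of orthonormal bases of the mutually orthogonal $W_j$, and all the displayed identities follow.

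For the scale-zero analysis I would pass to the Fourier transform. By Orthonormality, $\{|M|^{1/2}\Phi(M\,\cdot\,-\gamma)\}_\gamma$ is an orthonormal basis of $U_1$, and since $\Phi\in U_0\subseteq U_1$ this gives the refinement equation $\Phi=\sum_\gamma a_0(\gamma)\Phi(M\,\cdot\,-\gamma)$ with $a_0\in\ell^2(\Z^d)$ — these being the coefficients $a_0$ of the statement, the hypothesis $\sum_\gamma a_0(\gamma)=|M|$ being a normalization at the origin. With the $\Z^d$-periodic symbols $m_k(\xi):=\sum_\gamma a_k(\gamma)e^{-2\pi i\gamma\cdot\xi}$, $k=0,\dots,|M|-1$, the refinement equation and the definition of $\Psi_k$ become $\widehat\Phi(M^{\mathsf T}\xi)=|M|^{-1}m_0(\xi)\widehat\Phi(\xi)$ and $\widehat\Psi_k(M^{\mathsf T}\xi)=|M|^{-1}m_k(\xi)\widehat\Phi(\xi)$, while orthonormality of the $\Phi$-translates reads $\sum_{n\in\Z^d}|\widehat\Phi(\xi+n)|^2=1$ a.e. A short character-sum computation, regrouping $\Z^d$ modulo $M^{\mathsf T}\Z^d$, then shows the hypothesis $\sum_{\gamma'}a_j(\gamma')a_k(M\gamma+\gamma')=|M|\,\delta(j,k)\,\delta(\gamma,0)$ is exactly equivalent to the $|M|\times|M|$ modulation matrix $\mathcal M(\xi):=\bigl(|M|^{-1}m_j(\xi+M^{-\mathsf T}\rho_\ell)\bigr)_{0\le j,\ell\le|M|-1}$ being unitary for a.e.\ $\xi$, where $\rho_0=0,\dots,\rho_{|M|-1}$ are representatives of $\Z^d/M^{\mathsf T}\Z^d$.

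Orthonormality of the candidate basis then follows from Parseval: the inner products among the translates of $\Phi$ and of the $\Psi_k$ are governed by the periodizations $\sum_n\widehat\Psi_j(\xi+n)\overline{\widehat\Psi_k(\xi+n)}$ and $\sum_n\widehat\Psi_k(\xi+n)\overline{\widehat\Phi(\xi+n)}$; inserting the identities above and splitting the $n$-sum into cosets $n=M^{\mathsf T}n'+\rho_\ell$, periodicity of the symbols together with $\sum_{n'}|\widehat\Phi(\,\cdot\,+n')|^2=1$ collapses each periodization to a single entry of $\mathcal M(\xi)\mathcal M(\xi)^{*}=\mathrm{Id}$. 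This yields that $\{\Phi(\,\cdot\,-\gamma)\}\cup\{\Psi_k(\,\cdot\,-\gamma):1\le k\le|M|-1\}$ is orthonormal and that $V:=\overline{\operatorname{span}}\{\Psi_k(\,\cdot\,-\gamma)\}\perp U_0$. It remains to prove completeness in $U_1$: given $g\in U_1$, I would write $\widehat g(M^{\mathsf T}\xi)=|M|^{-1/2}\beta(\xi)\widehat\Phi(\xi)$ with $\beta$ square-integrable and $\Z^d$-periodic; since $\mathcal M(\xi)$ is invertible for a.e.\ $\xi$, one can solve for $\beta(\xi)=|M|^{-1/2}\sum_{j=0}^{|M|-1}c_j(M^{\mathsf T}\xi)\,m_j(\xi)$ with each $c_j$ $\Z^d$-periodic and square-integrable, and transform back to exhibit $g$ as a sum of $U_0$-translates of $\Phi$ (the $j=0$ term) and $V$-translates of the $\Psi_j$ ($j\ge1$). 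Hence $U_1=U_0\oplus V$, so $V=W_0$ and the displayed family is an orthonormal basis of $W_0$; combined with the first paragraph this proves the theorem.

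I expect the completeness step to be the main obstacle. The telescoping of the first paragraph and the Fourier dictionary of the second are soft, and the orthonormality of the third is a direct computation; but completeness is the only place where the algebraic filter relations are genuinely used — through the pointwise invertibility of $\mathcal M(\xi)$ — and carrying it out cleanly requires careful bookkeeping with the two nested lattices $M^{\mathsf T}\Z^d\subseteq\Z^d$, in particular distinguishing $\Z^d$-periodic multipliers from $M^{\mathsf T}\Z^d$-periodic ones, together with a measurability and $\ell^2$-convergence check for the coefficient functions $c_j$ obtained from inverting $\mathcal M(\xi)$.
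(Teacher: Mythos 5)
There is nothing in the paper to compare your argument against: Theorem~\ref{BenedettoTheorem} is imported verbatim as Theorem 1.7 of \cite{benedetto1993wavelets} and is never proved in the manuscript --- the only related material, Appendix~\ref{Appendix_Example}, merely verifies the \emph{hypotheses} of the theorem for the tensor-product construction of Example~\ref{MRAByTensorProduct}. Judged on its own, your outline is the standard proof of this classical result and its structure is sound: the telescoping and dilation reduction to $W_0$, the translation of the displayed coefficient relations into a.e.\ unitarity of the $|M|\times|M|$ modulation matrix $\mathcal M(\xi)$, the Parseval computation for orthonormality, and the inversion of $\mathcal M(\xi)$ for completeness in $U_1$ are exactly the steps in the literature, and your normalizations (the factor $|M|^{-1}m_j$, the coset splitting of $\Z^d$ modulo $M^{\mathsf T}\Z^d$) check out. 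Two remarks. First, you tacitly supply a hypothesis that the statement omits: nothing in the displayed conditions ties $a_0$ to $\Phi$, and the theorem fails unless $a_0$ is the refinement mask of $\Phi$, i.e., $a_0(\gamma)=|M|\int_{\R^d}\Phi(x)\,\Phi(Mx-\gamma)\,\intd{x}$, which is precisely how the paper reads it in Appendix~\ref{Appendix_Example}; this identification should be stated, not derived in passing. Second, the completeness step --- the only place the algebraic relations are genuinely used --- is left as an outline: one still has to check that the functions $c_j$ obtained by applying $\mathcal M(\xi)^{*}$ to the vector $\bigl(\beta(\xi+M^{-\mathsf T}\rho_\ell)\bigr)_\ell$ are well defined as $\Z^d$-periodic functions of $M^{\mathsf T}\xi$ (independent of the choice of coset representatives) and square-integrable over a fundamental domain, the latter following from the boundedness of the entries of a unitary matrix. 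These points are routine, and with them filled in your proof is complete.
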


The scaling function $\Phi$ is also called the father wavelet and also denoted by $\Psi_0$. The $\Psi_k$ are the mother wavelets for $k=1,\ldots,|M|-1$. We sketch in a short example how to construct a $d$-dimensional MRA provided that one has a father and a mother wavelet on the real line.
\begin{example}[Isotropic $d$-dimensional MRA from one-dimensional MRA via tensor products]\label{MRAByTensorProduct}
Let $d\in\N_+$ and let $\phi$ be a father wavelet on the real line $\R$ together with a mother wavelet $\psi$, so that $\phi$ and $\psi$ are related by the identities
\begin{align*}
		\phi \equiv \sqrt{2} \sum_{\gamma\in\Z}  h_\gamma\, \phi( 2 \darg - \gamma) \text{ and } \psi \equiv \sqrt{2} \sum_{\gamma\in\Z} g_\gamma\, \phi (2 \darg -\gamma),
\end{align*}
for real-valued sequences $( h_\gamma: \gamma \in \Z)$ and $(g_\gamma: \gamma\in\Z)$. Let $\phi$ generate an MRA of $L^2(\lambda)$ with the corresponding spaces $U'_j$, $j\in \Z$. The $d$-dimensional wavelets are derived as follows. Define $M$ by $2 I_d$, where $I_d$ is the identity matrix in $\R^{d\times d}$. Denote the mother wavelets as pure tensors by $\Psi_k \coloneqq \xi_{k_1} \otimes \ldots \otimes \xi_{k_d}$ for $k \in \{0,1\}^d\setminus 0$, where $\xi_0 \coloneqq \phi$ and $\xi_1 \coloneqq \psi$. The scaling function is given as $\Phi \coloneqq \Psi_0 \coloneqq \otimes_{i=1}^{d} \phi$.

In Appendix~\ref{Appendix_Example} we demonstrate that $\Phi$ and the linear spaces $U_j \coloneqq \otimes_{i=1}^d U'_j$ form an MRA of $L^2(\lambda^d)$ and that the functions $\Psi_k$, generate an orthonormal basis in the sense of \eqref{ONBWavelets} where $W_j$ equals $\langle |M|^{j/2} \Psi_k\left( M^j\darg - \gamma \right): \gamma \in \Z^d,\, k \in \{0,1\}^d \setminus 0 \rangle$.
\end{example}

\subsection{Consistency and Rate of Convergence}
In the sequel, we bridge the gap between non-parametric regression and wavelet theory. As indicated in Theorem~\ref{ConsistencyTruncatedLeastSquaresGeneral} the function spaces $\bigcup_{k\in\N_+} \cF_{n(k)}$ are preferably dense in $L^2(\mu)$ for any probability measure $\mu$. The next theorem states that wavelets satisfy this universal approximation property.

\begin{theorem}\label{WaveletsDenseLp}
Consider an isotropic MRA on $\R^d$ with corresponding scaling function $\Phi$ constructed as in Example~\ref{MRAByTensorProduct} from a compactly supported real scaling function $\phi$. Let $\mu$ be a probability measure on $\cB(\R^d)$ and let $1\le p < \infty$. Then	$\bigcup_{j\in\Z} U_j \text{ is dense in } L^p(\mu)$.
\end{theorem}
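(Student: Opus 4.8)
The plan is to show that $\bigcup_{j\in\Z} U_j$ is dense in $L^p(\mu)$ in two stages: first reduce to approximating continuous compactly supported functions, then exploit the projection/localization properties of the scaling function $\Phi$ to approximate such a function by an element of some $U_j$ in the supremum norm on a large enough cube. Since $\mu$ is a finite Borel measure on $\R^d$, the space $C_c(\R^d)$ of continuous compactly supported functions is dense in $L^p(\mu)$ for $1\le p<\infty$ (standard measure theory: simple functions are dense, indicators of Borel sets are approximated by indicators of open/compact sets via regularity of finite Borel measures on $\R^d$, and those in turn by continuous functions via Urysohn). So it suffices to fix $g\in C_c(\R^d)$ and $\epsilon>0$ and produce $j$ and $f_j\in U_j$ with $\norm{g-f_j}_{L^p(\mu)}<\epsilon$.

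The key step is the following \emph{deterministic} approximation fact about the MRA of Example~\ref{MRAByTensorProduct}: for $g\in C_c(\R^d)$, the orthogonal projections $P_j g$ of $g$ onto $U_j$ (in $L^2(\lambda^d)$) converge to $g$ \emph{uniformly} as $j\to\infty$, and moreover the $P_j g$ are supported in a fixed compact set (independent of $j$, once $j$ is large) because $\phi$ — hence $\Phi=\otimes_{i=1}^d\phi$ — has compact support. For the one-dimensional compactly supported $\phi$ generating an MRA one has $\int\phi\,d\lambda\neq 0$, and after the usual normalization $\int\phi\,d\lambda=1$; writing $P_j g(x)=\sum_{\gamma}\langle g,\Phi_{j,\gamma}\rangle\Phi_{j,\gamma}(x)$ with $\Phi_{j,\gamma}=|M|^{j/2}\Phi(M^j\cdot-\gamma)=2^{jd/2}\Phi(2^j\cdot-\gamma)$, a standard computation (quasi-interpolation / approximate identity estimate, using uniform continuity of $g$, compact support of $\Phi$, and $\sum_\gamma\Phi(x-\gamma)=1$ which follows from the two-scale relation and $\int\phi=1$) gives $\norm{P_j g-g}_\infty\to 0$. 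This is exactly the content of the classical $L^p(\lambda^d)$-density of the $U_j$ for compactly supported MRAs; here I only need the uniform bound on a compact set, which is softer. Denote by $R>0$ a radius such that $\operatorname{supp} g$ and all $\operatorname{supp} P_j g$ (for $j$ large) lie in the cube $[-R,R]^d$.

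Finally I combine the two stages: choose $j$ large enough that $\norm{P_j g-g}_\infty<\epsilon/(\mu(\R^d)^{1/p}+1)$ and set $f_j:=P_j g\in U_j$. Then
\begin{align*}
\norm{g-f_j}_{L^p(\mu)}^p=\int_{\R^d}|g-P_j g|^p\intd{\mu}\le \norm{g-P_j g}_\infty^p\,\mu(\R^d)<\epsilon^p,
\end{align*}
so $\norm{g-f_j}_{L^p(\mu)}<\epsilon$, and density follows. I expect the main obstacle to be the rigorous verification of the uniform-on-compacta convergence $\norm{P_j g-g}_\infty\to 0$ together with the uniform support bound: this needs the partition-of-unity identity $\sum_{\gamma\in\Z^d}\Phi(x-\gamma)=1$ (valid for a compactly supported scaling function with $\int\phi=1$, inherited from the one-dimensional case through the tensor product), the compact support of $\Phi$ to make the sum over $\gamma$ locally finite, and uniform continuity of $g$ to control $\sum_\gamma\big(g(M^{-j}\gamma)-g(x)\big)\Phi(M^jx-\gamma)$ — everything else (reduction to $C_c$, the final Hölder-type estimate against $\mu$) is routine.
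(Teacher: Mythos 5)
Your proposal is correct, but it takes a genuinely different route from the paper's. The paper argues by duality: assuming $\bigcup_{j}U_j$ is not dense in $L^p(\mu)$, it produces a nonzero annihilator $g\in L^q(\mu)$ and shows that the Fourier transform $\xi\mapsto\int_{\R^d}e^{i\langle x,\xi\rangle}g(x)\,\mu(\mathrm{d}x)$ of the complex measure $g\,\mathrm{d}\mu$ vanishes identically --- by splitting $\cos\langle\cdot,\xi\rangle$ into tensor products of one-dimensional cosines, approximating each factor in $L^\infty(\lambda)$ on a large interval via the projection kernel $K_j$ (moment condition $M(0)$ and Theorem 8.1/Remark 8.4 of H\"ardle et al.), and controlling the tail outside a large cube with H\"older's inequality and the finiteness of $\mu$ --- which contradicts $g\neq 0$ by injectivity of the Fourier transform of finite measures. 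You instead argue directly: reduce to $C_c(\R^d)$, which is dense in $L^p(\mu)$ because a finite Borel measure on $\R^d$ is Radon, and then approximate such a function by its projections $P_jg\in U_j$, using the same kernel/approximate-identity estimate to obtain $\lVert P_jg-g\rVert_\infty\to 0$ together with a $j$-uniform compact-support bound coming from the compact support of $\Phi$. Both proofs rest on the identical wavelet input (the partition of unity $\sum_{\gamma}\Phi(\cdot-\gamma)\equiv 1$ and the locally finite, dominated kernel); yours dispenses with the Hahn--Banach step and with Fourier uniqueness and exhibits explicit approximants, at the small price of checking that $P_jg$ is a finite linear combination of the $\Phi_{j,\gamma}$ (hence lies in $U_j$ and in $L^p(\mu)$), which is immediate since $\Phi$ and $g$ are compactly supported. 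One caveat you share with the paper: both arguments place a sup-norm (or $L^\infty(\lambda)$) bound under an integral against $\mu$, which may be singular with respect to $\lambda$; this is harmless when $\phi$ is continuous or is the Haar function, so that the partition of unity and the kernel identities hold pointwise everywhere, and your write-up, like the paper's, implicitly uses such a pointwise version.
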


In what follows, we assume that $\Phi$ is a compactly supported scaling function and that $M$ is a diagonalizable matrix, i.e., $M = S^{-1} D S$ for a diagonal matrix $D$ which contains the eigenvalues of $M$. Denote the maximum of the absolute values of the eigenvalues by $\zeta_{max} \coloneqq \max\{ |\zeta_i| : i=1,\ldots, d \}$. Set 
$$
\Phi_{j,\gamma} \coloneqq |M|^{j/2}\,\Phi ( M^j \,\cdot\, - \gamma ), \text{ where }\gamma\in\Z^d \text{ and } j\in\Z.
$$
Let $(w_{n(k)}: k\in\N) \subseteq \Z$ and $( j(n(k)):k\in\N) \subseteq \Z$ be two increasing sequences with $\lim_{n\rightarrow\infty} w_n= \infty $ and $\lim_{n\rightarrow\infty} j(n) = \infty$ such that $\lim_{n\rightarrow\infty} (\zeta_{max})^{j(n) }/{w_n}=0$. We set	$K_n \coloneqq \{\gamma\in\Z^d: \norm{\gamma}_{\infty} \le w_n \} \subseteq \Z^d$. Then, we define the linear function space by
\begin{align}
		\cF_n \coloneqq \bigg\{ \sum_{\gamma\in K_n} a_{\gamma}\, \Phi_{j(n),\gamma} : a_{\gamma}\, \in \R  \bigg\} \subseteq U_{j(n)}.  \label{LinSpaceWavelets}
\end{align}
So the $j(n)$ scale $\Phi$, whereas the $w_n$ control which translations are used for the construction of the function space $\cF_n$. Based on the results from the previous section, the following statements are true for the linear wavelet estimate.

\begin{theorem}\label{ConsistencyWaveletRegression}
Assume that the wavelet basis is dense in $L^2(\mu_X)$. Set $\rho_n \coloneqq c\, \log |I_n|$ for some constant $c\in\R_+$. Define the wavelet estimator $\hat{m}_n$ by \eqref{lsqIII} and \eqref{LinSpaceWavelets}.

Assume that Condition~\ref{regCond0}~\ref{Cond_AlphaMixing} is satisfied, then $\hat{m}_n$ is weakly universally consistent if 
\begin{align}\label{ConsistencyWaveletRegressionEq0}
		& \lim_{n \rightarrow \infty}  w_n^{d}\, (\log |I_n|)^{6}\, \log \log |I_n|\, \big/\, |I_n|^{1/N} = 0.
\end{align}
$\hat{m}_n$ is strongly universally consistent if $(X,Y)$ is stationary, if \eqref{ConsistencyWaveletRegressionEq0} holds and if additionally
$
		\lim_{n\rightarrow\infty} (\log |I_n|)^{8} \,\big/\,|I_n|^{1/N} = 0.
$

Assume that Condition~\ref{regCond0}~\ref{Cond_BetaMixing} is satisfied. If $\lim_{n \rightarrow \infty}  w_n^{d}\, (\log |I_n|)^{N+4}\, \log \log |I_n|\, \big/\, |I_n| = 0$, then $\hat{m}_n$ is weakly universally consistent. $\hat{m}_n$ is strongly universally consistent if additionally $(X,Y)$ is stationary and if additionally $\lim_{n\rightarrow\infty} (\log |I_n|)^{N+6} \,\big/\,|I_n| = 0$.
\end{theorem}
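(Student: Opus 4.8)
The plan is to derive all four assertions from Corollary~\ref{ConsistencyTruncatedLeastSquaresGeneral_Corollary} (equivalently, from Theorem~\ref{ConsistencyTruncatedLeastSquaresGeneral} together with the covering-number bound of Proposition~\ref{boundCoveringNumber}) applied to the wavelet spaces $\cF_n$ of \eqref{LinSpaceWavelets}. These are of the form \eqref{linSpaceAsFunctionClass}: the translates $\Phi_{j(n),\gamma}$, $\gamma\in K_n$, are orthonormal in $L^2(\lambda^d)$ and hence linearly independent, so $\cF_n$ is a genuine linear span of $(2w_n+1)^d$ basis functions, each truncation $T_{\rho_n}\cF_n$ consists of uniformly bounded measurable functions, and with $\rho_n=c\,\log|I_n|$ the sequence $\rho_n$ is positive and diverges. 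Corollary~\ref{ConsistencyTruncatedLeastSquaresGeneral_Corollary} applies directly whenever $\phi$ (hence $\Phi$) is continuous; in general one invokes Theorem~\ref{ConsistencyTruncatedLeastSquaresGeneral} instead, since the subgraphs of a $(2w_n+1)^d$-dimensional linear space have Vapnik--Chervonenkis dimension at most $(2w_n+1)^d+1$, unchanged by truncation, so Proposition~\ref{boundCoveringNumber} gives $\kappa_n(\epsilon,\rho_n)=\log H_{T_{\rho_n}\cF_n}(\epsilon/(4\rho_n))$ of order $(2w_n+1)^d\log(\rho_n^2/\epsilon)$; in particular $\kappa_n(\epsilon,\rho_n)\rightarrow\infty$ because $(2w_n+1)^d\rightarrow\infty$.

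The only step that is not pure bookkeeping is showing that the $\cF_n$ form an increasing family whose union is dense in $L^2(\mu_X)$. Denseness follows by combining Theorem~\ref{WaveletsDenseLp} with the compact support of $\Phi$ and the hypothesis $(\zeta_{max})^{j(n)}/w_n\rightarrow0$: given $m\in L^2(\mu_X)$, approximate it in $L^2(\mu_X)$ by a continuous, compactly supported $f'$, then approximate $f'$ uniformly by its wavelet projection $P_{j(n)}f'\in U_{j(n)}$ (standard wavelet approximation; since $\mu_X$ is a probability measure, $\norm{f'-P_{j(n)}f'}_{L^2(\mu_X)}\le\norm{f'-P_{j(n)}f'}_{\infty}\rightarrow0$). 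Because $\Phi$ has compact support, $P_{j(n)}f'$ involves only those $\gamma$ with $\operatorname{supp}\Phi_{j(n),\gamma}\cap\operatorname{supp}f'\neq\emptyset$, i.e.\ with $\norm{\gamma}_{\infty}\le C\,(\zeta_{max})^{j(n)}$ for a constant $C$ depending on $f'$, $\Phi$, $M$ (using $M=S^{-1}DS$, so $\norm{M^{j}}_{2}\le\norm{S^{-1}}_{2}\norm{S}_{2}\,(\zeta_{max})^{j}$); since $(\zeta_{max})^{j(n)}/w_n\rightarrow0$, for $n$ large $C(\zeta_{max})^{j(n)}\le w_n$, hence $P_{j(n)}f'\in\cF_n$. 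The same support accounting lets one arrange the family to be increasing (thinning the scale sequence if necessary); in any case the proof of Theorem~\ref{ConsistencyTruncatedLeastSquaresGeneral} only uses that $\inf_{f\in\cF_n}\norm{f-m}_{L^2(\mu_X)}^2\rightarrow0$, which the above establishes.

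It then remains to substitute. With the number of basis functions equal to $(2w_n+1)^d$ (of order $w_n^d$), with $\rho_n^4$ of order $(\log|I_n|)^4$ and $\log\rho_n$ of order $\log\log|I_n|$, the weak-consistency condition of Corollary~\ref{ConsistencyTruncatedLeastSquaresGeneral_Corollary} under Condition~\ref{regCond0}~\ref{Cond_AlphaMixing}, namely $(2w_n+1)^d\rho_n^4\log\rho_n(\log|I_n|)^2/|I_n|^{1/N}\rightarrow0$, is equivalent to $w_n^{d}(\log|I_n|)^{6}\log\log|I_n|/|I_n|^{1/N}\rightarrow0$, i.e.\ \eqref{ConsistencyWaveletRegressionEq0}; the extra condition $\rho_n^4(\log|I_n|)^4/|I_n|^{1/N}\rightarrow0$ for strong consistency becomes $(\log|I_n|)^{8}/|I_n|^{1/N}\rightarrow0$. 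Likewise, under Condition~\ref{regCond0}~\ref{Cond_BetaMixing}, the weak-consistency condition $(2w_n+1)^d\rho_n^4\log\rho_n(\log|I_n|)^N/|I_n|\rightarrow0$ becomes $w_n^{d}(\log|I_n|)^{N+4}\log\log|I_n|/|I_n|\rightarrow0$, and the additional condition $\rho_n^4(\log|I_n|)^{N+2}/|I_n|\rightarrow0$ (imposed together with stationarity, hence $m$-dependence) becomes $(\log|I_n|)^{N+6}/|I_n|\rightarrow0$. These coincide with the hypotheses in the statement, so the corresponding conclusions of Corollary~\ref{ConsistencyTruncatedLeastSquaresGeneral_Corollary} yield exactly the asserted weak, respectively strong, universal consistency of $\hat m_n$.

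I expect the genuine obstacle to be the second paragraph: making the support-localization estimate precise (bounding which lattice translates can enter $P_{j(n)}f'$ in terms of $(\zeta_{max})^{j(n)}$ via the diagonalization of $M$, and reconciling the resulting denseness with the increasing-family requirement). The remainder is a substitution into the general theorems of Section~\ref{Section_NonParaRegGeneral}.
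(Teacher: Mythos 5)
Your proposal is correct and follows essentially the same route as the paper: establish the approximation property of the truncated classes $T_{\rho_n}\cF_n$ by combining the denseness of the wavelet spaces with a support-localization argument based on $M=S^{-1}DS$ and $(\zeta_{max})^{j(n)}/w_n\to 0$, bound $\kappa_n(\epsilon,\rho_n)=\cO(w_n^d\log\rho_n)$ via Proposition~\ref{boundCoveringNumber}, and substitute $\rho_n=c\log|I_n|$ and $K_n\asymp w_n^d$ into Theorem~\ref{ConsistencyTruncatedLeastSquaresGeneral}; your arithmetic for all four growth conditions matches the paper's. The only (harmless) deviation is that you re-derive the $L^2(\mu_X)$-denseness from Theorem~\ref{WaveletsDenseLp}, whereas the theorem already assumes it as a hypothesis, so the paper only needs the tail-localization step to pass from $U_{j(n)}$ to the finitely many translates indexed by $K_n$.
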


\begin{theorem}\label{RateOfConvergenceWaveletRegression}
Let Condition~\ref{regCond0}~\ref{Cond_BetaMixing} and the assumptions of Theorem~\ref{nonParaRegRateOfConvergence} be satisfied, then there is a constant $C$ independent of $n$ such that
\begin{align*}
\E{\int_{\R^d} |\hat{m}_n - m|^2\,\intd{\mu_X} } &\le C\,w_n^{d} (\log |I_n|)^{N+2} \big/ |I_n| + 16 \inf_{f\in\cF_n} \int_{\R^d} |f-m|^2\,\intd{\mu_X}.
\end{align*}
\end{theorem}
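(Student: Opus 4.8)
The plan is to obtain Theorem~\ref{RateOfConvergenceWaveletRegression} as a direct specialization of Theorem~\ref{nonParaRegRateOfConvergence} to the wavelet spaces $\cF_n$ of \eqref{LinSpaceWavelets}. First I would check that $\cF_n$ genuinely has the linear form \eqref{linSpaceAsFunctionClass}: it is the span of the family $\{\Phi_{j(n),\gamma}:\gamma\in K_n\}$ with index set $K_n=\{\gamma\in\Z^d:\norm{\gamma}_{\infty}\le w_n\}$, so the number of basis functions is $|K_n|=(2w_n+1)^d$. For the fixed scale $j(n)$ the dilated translates $\Phi_{j(n),\gamma}=|M|^{j(n)/2}\Phi(M^{j(n)}\,\cdot\,-\gamma)$, $\gamma\in\Z^d$, form an orthonormal basis of $U_{j(n)}$ by the orthonormality and scaling axioms of the MRA underlying Theorem~\ref{BenedettoTheorem}; in particular every finite subfamily is linearly independent, so $\dim\cF_n=(2w_n+1)^d$. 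Being a finite-dimensional linear space, $\cF_n$ also meets the measurability requirement \eqref{supMeas2}, as observed below that display, and $\hat m_n$ is the truncated least-squares estimator \eqref{lsqIII} with $\rho_n=c\log|I_n|$ as fixed in the wavelet set-up.

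Next I would invoke the $\beta$-mixing half of Theorem~\ref{nonParaRegRateOfConvergence}. Its hypotheses are exactly what is assumed here: $\norm{m}_{\infty},\norm{\varsigma}_{\infty}\le L$; the moment bound $\E{|\epsilon(e_N)|^{2+\gamma}}<\infty$ in the correlated-errors case; Condition~\ref{regCond0}~\ref{Cond_BetaMixing}; and the growth condition applied with the dimension $|K_n|=(2w_n+1)^d$ in place of $K_n$, namely $(2w_n+1)^d(\log|I_n|)^{N+2}/|I_n|\to 0$. The theorem then supplies a constant $C$, independent of $n$, with
\begin{align*}
\E{\int_{\R^d}|\hat m_n-m|^2\,\intd{\mu_X}}\le 16\inf_{f\in\cF_n}\int_{\R^d}|f-m|^2\,\intd{\mu_X}+C\,\frac{(2w_n+1)^d(\log|I_n|)^{N+2}}{|I_n|}.
\end{align*}
Since $w_n$ is increasing with $w_n\to\infty$, one has $(2w_n+1)^d\le 3^dw_n^d$ for all large $n$ (finitely many initial terms only affect the constant), so replacing $C$ by $3^dC$ yields the asserted bound
\begin{align*}
\E{\int_{\R^d}|\hat m_n-m|^2\,\intd{\mu_X}}\le C\,w_n^{d}(\log|I_n|)^{N+2}\big/|I_n|+16\inf_{f\in\cF_n}\int_{\R^d}|f-m|^2\,\intd{\mu_X}.
\end{align*}

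The only step that is not pure bookkeeping is the first one: verifying that the dilated translates $\Phi_{j(n),\gamma}$, $\gamma\in K_n$, form a measurable, linearly independent system so that Theorem~\ref{nonParaRegRateOfConvergence} may legitimately be applied with ambient dimension $(2w_n+1)^d$ — this is where the orthonormality coming from the MRA enters. Everything else, i.e.\ counting $|K_n|$, absorbing the combinatorial factor $3^d$, and matching the $(\log|I_n|)^{N+2}/|I_n|$ rate, is immediate, and I do not expect a genuine obstacle, since the wavelet space \eqref{LinSpaceWavelets} was constructed precisely so as to fit the linear-space framework of Theorem~\ref{nonParaRegRateOfConvergence}.
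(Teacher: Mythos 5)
Your proposal is correct and follows essentially the same route as the paper: the paper likewise proves this theorem by applying the $\beta$-mixing case of Theorem~\ref{nonParaRegRateOfConvergence} to the linear space \eqref{LinSpaceWavelets}, whose dimension is $(2w_n+1)^d=\cO(w_n^d)$. Your additional check that the fixed-scale translates $\Phi_{j(n),\gamma}$ are orthonormal (hence linearly independent) is exactly the bookkeeping the paper leaves implicit.
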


We give a short application in the case where the wavelet basis is generated by isotropic Haar wavelets in $d$ dimensions and where the regression function $m$ is $(A,r)$-H{\"o}lder continuous on a compact subset of $\R^d$. This means that $	|m(x)-m(y)| \le A \norm{x-y}_{\infty}^r $ for all $x,y$ in the domain of $m$, for an $A\in \R_+ $ and for an $r\in (0,1]$.

\begin{corollary}\label{HaarBasisRateOfConv}
Let the conditions of Theorem \ref{RateOfConvergenceWaveletRegression} be satisfied such that the data fulfils Condition~\ref{regCond0} \ref{Cond_BetaMixing}. Let the conditional mean function $m$ be $(A,r)$-H{\"o}lder continuous. Define the level $j$ as a function of $n$ by
$
				2^j \simeq |I_n|^{1/(d+2r)}$. Then
\begin{align}\label{ExampleRateOfConvHolder}
		\E{ \int_{\R^d} | \hat{m}_n - m|^2\, \intd{\mu} } & = \cO\left(	(\log |I_n|)^{N+2} |I_n|^{-2r/(d+2r) } \right).
\end{align}
\end{corollary}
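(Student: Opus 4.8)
The plan is to combine Theorem~\ref{RateOfConvergenceWaveletRegression} with the elementary fact that Haar (piecewise-constant) projections approximate an $(A,r)$-H{\"o}lder function at rate $2^{-jr}$. First I would take the Haar pair on $\R$ — $\phi = \mathds{1}_{[0,1)}$ and $\psi = \mathds{1}_{[0,1/2)} - \mathds{1}_{[1/2,1)}$ — so that the tensor construction of Example~\ref{MRAByTensorProduct} has dilation matrix $M = 2 I_d$, whence $|M| = 2^d$, $\zeta_{max} = 2$ and $\Phi_{j,\gamma} = 2^{jd/2}\,\mathds{1}_{Q_{j,\gamma}}$ with $Q_{j,\gamma} \coloneqq 2^{-j}\bigl(\gamma + [0,1)^d\bigr)$. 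Because $\phi$ is compactly supported, Theorem~\ref{WaveletsDenseLp} shows the MRA is dense in $L^2(\mu_X)$, and H{\"o}lder continuity of $m$ on its compact domain $\cC$ (which we may assume contains $\supp\mu_X$) gives $\norm{m}_\infty \le L$ for a suitable $L$. Hence the hypotheses of Theorem~\ref{RateOfConvergenceWaveletRegression} are met — its remaining requirement $|K_n|\,(\log|I_n|)^{N+2}/|I_n| \to 0$, where $|K_n| = (2 w_n + 1)^d$ is the number of basis functions, being checked at the end — and it yields
\begin{align*}
\E{\int_{\R^d} |\hat m_n - m|^2 \intd{\mu_X}} \;\le\; C\, w_n^{d}\,(\log|I_n|)^{N+2}\big/|I_n| \;+\; 16\,\inf_{f\in\cF_n}\int_{\R^d} |f - m|^2 \intd{\mu_X}.
\end{align*}

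Next I would bound the approximation term. Since $\cC$ is bounded, I may choose $w_n \asymp 2^{j(n)}$ of the smallest order for which $\{Q_{j(n),\gamma} : \norm{\gamma}_\infty \le w_n\}$ covers $\cC$ (if one additionally insists on the standing condition $\zeta_{max}^{j(n)}/w_n \to 0$, an extra factor growing slower than any power of $\log|I_n|$ may be appended, which only changes lower-order terms). Then $\cF_n$ contains every function that is constant on each such cube and vanishes elsewhere; taking $f\in\cF_n$ equal to $m(x_\gamma)$ on $Q_{j(n),\gamma}$ for an arbitrary point $x_\gamma \in Q_{j(n),\gamma}\cap\supp\mu_X$ (and $0$ on cubes disjoint from $\supp\mu_X$), and using that the $\norm{\,\cdot\,}_\infty$-diameter of a cube of side $2^{-j(n)}$ is at most $2^{-j(n)}$, H{\"o}lder continuity gives $|f(x)-m(x)| \le A\,2^{-j(n) r}$ for $\mu_X$-a.e.\ $x$, whence $\inf_{f\in\cF_n}\int_{\R^d} |f-m|^2 \intd{\mu_X} \le A^2\, 2^{-2 j(n) r}$.

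Finally I would substitute the prescribed scale $2^{j(n)} \simeq |I_n|^{1/(d+2r)}$. Since $d/(d+2r) - 1 = -2r/(d+2r)$, the stochastic term becomes $w_n^{d}\,(\log|I_n|)^{N+2}/|I_n| \asymp (\log|I_n|)^{N+2}\,|I_n|^{-2r/(d+2r)}$; the very same computation shows $|K_n|\,(\log|I_n|)^{N+2}/|I_n| \asymp (\log|I_n|)^{N+2}\,|I_n|^{-2r/(d+2r)} \to 0$, so the side condition invoked above indeed holds. The approximation term is at most $A^2\, 2^{-2 j(n) r} \le A^2\, |I_n|^{-2r/(d+2r)}$, which is of strictly smaller order. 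Adding the two contributions gives \eqref{ExampleRateOfConvHolder}.

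The only genuine obstacle is the bias--variance balance: the resolution $j(n)$ must be tuned so that the deterministic error $2^{-2 j(n) r}$ and the stochastic error $w_n^d (\log|I_n|)^{N+2}/|I_n|$ equilibrate, while $w_n$ is kept no larger than covering $\supp\mu_X$ requires, so that the variance term does not acquire logarithmic factors beyond $(\log|I_n|)^{N+2}$. The underlying Haar approximation estimate for H{\"o}lder functions is routine.
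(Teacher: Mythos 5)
Your proposal is correct and follows essentially the same route as the paper: invoke Theorem~\ref{RateOfConvergenceWaveletRegression}, bound the approximation error by $A^2 2^{-2 j(n) r}$ via a piecewise-constant interpolant of $m$ on dyadic cubes of side $2^{-j(n)}$ using H{\"o}lder continuity, and balance the two terms at $2^{j}\simeq |I_n|^{1/(d+2r)}$. If anything you are slightly more careful than the paper, which simply takes $w_n$ proportional to $2^{j}$ without commenting on the standing requirement $\zeta_{max}^{j(n)}/w_n\to 0$ or on the verification of the side condition $K_n(\log|I_n|)^{N+2}/|I_n|\to 0$; both of your remarks on these points are accurate and harmless.
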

\begin{proof}
Note that by construction it suffices to choose $w_n$ proportional to $2^j$ because the domain of the function $m$ is bounded, we can cover it with $2^{j d} $ wavelets from the $j$-th scale. This means that the estimation error behaves as $\cO( 2^{j d} (\log |I_n|)^{N+2} / |I_n| )$.

It remains to compute the approximation error: there is a function $f \in \cF_n$ piecewise constant on dyadic $d$-dimensional cubes of edge length $2^{- j }$ with values
\[
f(x) = m\left( (\gamma_1,\ldots,\gamma_d)/2^j \right) \text{ for } x\in \left[ (\gamma_1,\ldots,\gamma_d)/2^j, ((\gamma_1,\ldots,\gamma_d) + e_N)/2^j \right),
\]
where $\gamma_i \in \Z$ for $i=1,\ldots,d$ such that $\gamma=(\gamma_1,\ldots,\gamma_d)$ is an admissible element from $K_n$. Hence for this $f$
$$
			\int_{\R^d} |f-m|^2\,\intd{\mu_X} \le \sup_{ dom\, m} |f-m|^2 \le A^2\, 2^{-2r j(n) }.
$$ 
The choice of $j$ as $2^j \simeq |I_n|^{1/(d+2r)}$ approximately equates the estimation and the approximation error.
\end{proof}

The interpretation of the two parameters $d$ and $r$ in the rate of convergence is well known: on the one hand, an increase in $d$ deteriorates the rate (the curse of dimensionality). On the other hand, an increase in $r$ towards 1 increases the rate of convergence because the regression function becomes smoother and can be better approximated by finite linear combinations of functions.

We compare the above result to the results for the classical case of i.i.d.\ data: if the regression function is H{\"o}lder continuous, the rate of convergence is in $\cO\left( k^{-2r/(d+2r)} \right)$ up to a logarithmic factor, where the sample size is $k$, see \cite{kohler2003nonlinear} or \cite{gyorfi}. This is nearly optimal when compared to \cite{stone1982optimal}. The additional log-loss is due to the increasingly complex sieves.
 
Hence, our rate of $\cO\left(	(\log |I_n|)^{N+2} |I_n|^{-2r/(d+2r) } \right)$ is the same modulo a logarithmic factor. Note that this result is independent of the lattice dimension $N$ on which the data is defined.

\cite{chen2013optimal} also consider regression function estimates with wavelets for $\beta$-mixing time series. Their results can be compared to the present findings in the special case where the lattice dimension $N$ equals 1. They also obtain a nearly optimal rate w.r.t.\ the sup-norm.

\cite{li2016nonparametric} considers a wavelet based regression estimator for spatially dependent data similar to our model \eqref{lsqI} and also obtains a nearly optimal rate. However, some of the regularity conditions are more restrictive than those in Condition~\ref{regCond0}: the design distribution of the regressors $X(s)$ has to admit a known density and the response variables $Y(s)$ have to be bounded. Our results are derived without these additional restrictions.

\section{Examples of Application}\label{ExamplesOfApplication}

We begin this section with some well-known results on random fields necessary for the following applications. Let $G=(V,E)$ be a finite graph. We write $\text{Ne}(s)$ for the neighbours of a node $s$ w.r.t.\ the graph $G$ and $-s$ for the set $V\setminus \{s\}$.

Assume that $( Y(s)\colon s\in V )$ is multivariate normally distributed with expectation $\alpha \in \R^{|V|}$ and covariance matrix $\Sigma \in \R^{ |V|\times |V|}$.
If we write $P$ for the precision matrix $\Sigma^{-1}$, the conditional distribution of $Y(s)$ given the remaining observations $Y(-s)$ is
\[
		Y(s) \, |\, Y(-s) \sim \cN\Big(  \alpha(s) - (P(s,s))^{-1}  \sum_{t \neq s} P(s,t) \Big( y(t) - \alpha(t) \Big)  , P(s,s)^{-1} \Big).
\]
Since $P = \Sigma^{-1}$ is symmetric and since we can assume that $P(s,s) ^{-1} > 0$, $Y$ is a Markov random field if and only if
$
		P(s,t) \neq 0 \text{ for all } t \in \operatorname{Ne}(s) \text{ and }
		P(s,t) = 0 \text{ for all } t \in V \setminus \operatorname{Ne}(s),
$
 for all nodes $s\in V$.

\cite{cressie1993statistics} investigates the conditional specification
\begin{align}\label{concliquesMVN}
		Y(s) \,|\, Y(-s) \sim \cN\Big( \alpha(s) + \sum_{t \in \operatorname{Ne}(s)} c(s,t) \big(Y(t) - \alpha(t) \big), \tau^2(s) \Big),
\end{align}
where $C=\big( c(s,t) \big)_{1\le s,t\le |V|}$ is a $|V|\times |V|$ matrix and $T = \text{diag}(\tau^2(s): s\in V)$ is a $|V|\times|V|$ diagonal matrix such that the coefficients satisfy the condition $\tau^2(s) c(t,s) = \tau^2(t) c(s,t)$ for $s\neq t$ and $c(s,s) = 0$ as well as $c(s,t) = 0 = c(t,s)$ if $s,t$ are not neighbours. This means $P(s,t) = -c(s,t) P(s,s)$, i.e., $\Sigma^{-1} = P =  T^{-1}  (I-C)$. If $I-C$ is invertible and if $(I-C)^{-1} T$ is symmetric and positive definite, then the entire random field is multivariate normal with	$Y \sim \cN\left( \alpha, (I-C)^{-1} T \right)$.

It is plausible to use equal weights $c(s,t)$ in many applications, see \cite{cressie1993statistics}. Thus, we can write the matrix $C$ as $C=\eta H$, where $H$ is the adjacency matrix of $G$, i.e., $H(s,t)$ is 1 if $s$ and $t$ are neighbours, otherwise it is 0. Denote the maximal (resp. minimal) eigenvalue of $H$ by $h_m$ (resp. $h_0$). Assume that $h_0<0<h_m$ which is often satisfied in applications. We know from the properties of the Neumann series that in this case the matrix $I-C$ is invertible if $(h_0)^{-1} < \eta < (h_m)^{-1}$.

This insight allows us to simulate a Gaussian Markov random field with an MCMC-algorithm using concliques with a full conditional distribution. Here we refer to \cite{kaiser2012} for a general introduction to the concept of concliques and the simulation procedure, the latter is also described in \cite{krebs2017orthogonal}. In the present simulation examples, we run $15k$ iterations of the MCMC-algorithm. These suffice to ensure a nearly stationary distribution of the Gaussian random field.

We sketch the simulation procedure: let $V=\{s_1,\ldots,s_{|V|}\}$ be finite. We simulate a $d$-dimensional random field $Z$ on $G$ such that each component $Z_i$ takes values in $\R^{|V|}$, for $i=1,\ldots,d$. We use copulas to obtain a dependence-structure between the $Z_i$. Each $Z_i$ has a specification
$
	Z_i \sim  \cN \big( \alpha\, (1,\ldots,1)' , \sigma^2 \Sigma \big),
$
where $\alpha\in\R$, $\sigma\in\R_+$ may depend on $i$. Furthermore, $\Sigma$ is a correlation matrix which satisfies the relation
\begin{align}
		( I - \eta H )^{-1} T = \sigma^2 \Sigma. \label{GMRFSim2}
\end{align}
The parameter $\eta$ is chosen such that $I - \eta H$ is invertible and $T$ is a diagonal matrix $T = \text{diag}\big( \tau^2(s_1),\ldots,\tau^2(s_{|V|}) \big)$. A large absolute value of $\eta$ indicates a strong dependence within the random variables of one component $Z_i$, whereas $\eta = 0$ indicates independence within the component. The marginal distributions within the $i$-th component equal each other, i.e., $Z_i(s) \sim \cN(\alpha,\sigma^2)$ for $s\in V$. However, the conditional variances $\tau^2(\,\cdot\,)$ within a component $Z_i$ may differ.

In the next step, we use some of the components to construct the random field $\{ X(s): s \in V\}$ and use another independent component to construct the error terms $\{\epsilon(s): s\in V \}$. We specify this below. Then we simulate the random field $Y$ as in \eqref{lsqI} for a choice of $m$ and a constant $\varsigma$. So the conditional heteroscedastic part is constant. However, depending on the underlying graph $G$, the error terms $\epsilon(s)$ can have a complex mutual dependence pattern. We estimate $m$ with the truncated least-squares estimator from \eqref{lsqIII}. In the situation where the regression function $m$ is known, the $L^2$-error can serve as a criterion for the goodness-of-fit of $\hat{m}$: we partition the index set $V$ into a set $V_L$ containing the locations for the learning sample and a set $V_T$ containing the locations for the testing sample. Here both $V_L$ and $V_T$ should be two connected sets w.r.t.\ the underlying graph if this is possible. We estimate $\hat{m}$ from the learning sample and compute the approximate $L^2$-error with Monte Carlo integration over the testing sample, i.e.,
\begin{align}\label{GMRFSim3}
	\int_{\R^d} |\hat{m} - m|^2 \intd{\mu_X} \approx  |V_T|^{-1} \sum_{s\in V_T} | \hat{m}( X(s) ) - m( X(s) ) |^2.
\end{align}
We run this entire simulation procedure $1000$ times. Afterwards, we compute the mean and the standard deviation of the (approximate) $L^2$-error from \eqref{GMRFSim3} based on these simulations.

\begin{example}[Bivariate non-parametric regression]\label{SimGMVNEx}
We simulate a random field on a planar graph $G=(V,E)$ which represents the administrative divisions in the Sydney bay area on the statistical area level 1. See the website of the Australian bureau of statistics (www.abs.gov.au) for further reference. It comprises 7,713 nodes and approximately 47k edges in total. Hence, $G$ is highly connected relative to the four-nearest neighbour structure. Figure~\ref{fig:SydneyBayArea_SA1Graph} illustrates the graph.

We model a three-dimensional Markov random field $Z=(Z_1, Z_2, Z_3)$. Every $Z_i$ has a specification as in \eqref{concliquesMVN} such that the marginals $Z_i(s)$ within each component are standard normally distributed. The parameter space of $\eta$ is derived from the adjacency matrix of the graph $G$ and contains the interval $(-0.2221, 0.1312)$. Note that the range for the lattice with a four-nearest-neighbourhood structure is $\left( -0.25, 0.25 \right)$.

Then we adjust the marginal conditional variance $\tau^2_i(s)$ of the variable $Z_i(s)$ such that the entire random vector $Z_i$ has a covariance structure of the type $\Sigma_i$ as in \eqref{GMRFSim2}.
\begin{figure}
	\begin{subfigure}[b]{\textwidth} \centering
		\includegraphics[height=8cm, keepaspectratio]{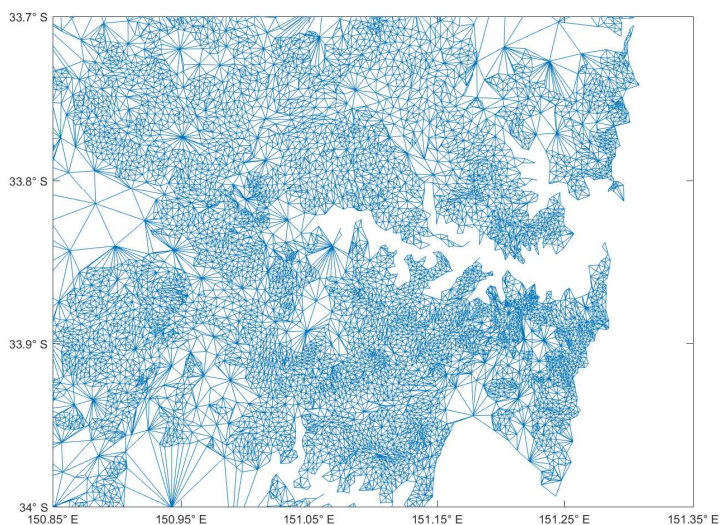}
	\caption{The Sydney bay area (on statistical area level 1-scale)}
	\label{fig:SydneyBayArea_SA1Graph}
	\end{subfigure}
	
	\begin{subfigure}[b]{\textwidth} \centering
			\includegraphics[height = 9cm, keepaspectratio]{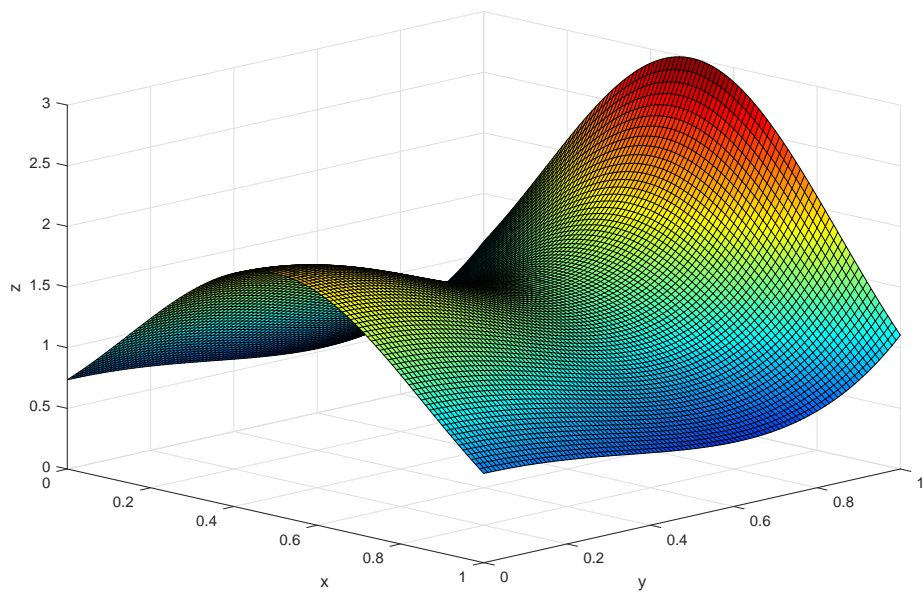}
	\caption{Function plot of $m$}
	\label{fig:Regression2dExample1_Theoretic}
	\end{subfigure}	
\caption{Input graph and regression function for the bivariate regression problem}
\label{fig:Regression2DExample1_Fig1}
\end{figure}

\begin{figure}
	\begin{subfigure}[b]{\textwidth} \centering
			\includegraphics[height = 8cm, keepaspectratio]{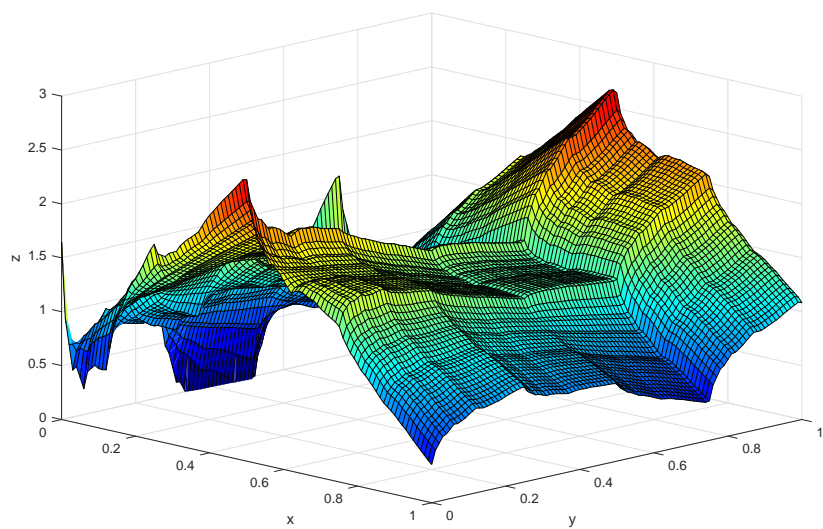}
		\caption{Estimate of ${m}$ with the D4 scaling function for $j=2$}
		\label{fig:Regression2dExample1_D2}
	\end{subfigure}

	\begin{subfigure}[b]{\textwidth} \centering
			\includegraphics[height = 8cm, keepaspectratio]{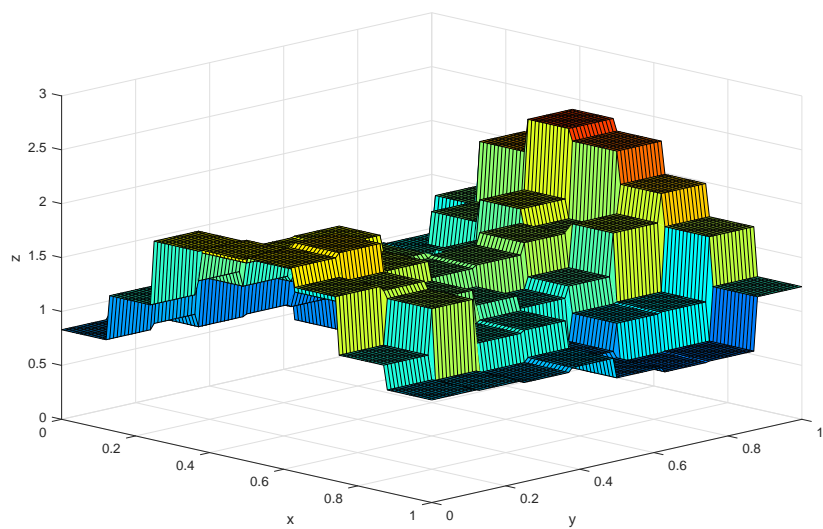}
		\caption{Estimate of ${m}$ with the Haar scaling function for $j=3$}
		\label{fig:Regression2dExample1_Haar}
	\end{subfigure}	
\caption{Estimated regression functions for a bivariate regression problem}
\label{fig:Regression2DExample1_Fig2}
\end{figure}

\begin{table}[ht]
\begin{center}
\begin{tabular}{|| c || c | c || c | c ||  }
\hline
\hline
	&	\multicolumn{2}{| c ||}{Estimates on the graph} & \multicolumn{2}{| c ||}{Independent reference estimates} \\
\hline
\hline	
	j &	D4 wavelet & Haar wavelet & D4 wavelet & Haar wavelet \\
\hline
\hline
\multirow{2}{*}{1}	&	0.264	&	0.413	&	0.260	&	0.406	\\
	&	(0.006)	&	(0.008)	&	(0.006)	&	(0.007)	\\
\hline									
\multirow{2}{*}{2}	&	0.122	&	0.258	&	0.119	&	0.254	\\
	&	(0.009)	&	(0.008)	&	(0.009)	&	(0.007)	\\
\hline									
\multirow{2}{*}{3}	&	0.163	&	0.198	&	0.170	&	0.196	\\
	&	(0.036)	&	(0.010)	&	(0.044)	&	(0.010)	\\
\hline									
\multirow{2}{*}{4}	&	0.422	&	0.259	&	0.435	&	0.257	\\
	&	(0.075)	&	(0.012)	&	(0.077)	&	(0.012)	\\
\hline
\hline
\end{tabular}
\caption{$L^2$-error of the bivariate regression problem: the estimated mean and in parentheses the estimated standard deviation for a level $j=1,\ldots,4$. The first two columns give the results for the random field, the last two columns those of the independent reference sample.}
\label{TableExampleBivariate}
\end{center}
\end{table}

In order to obtain dependent components $Z_1$ and $Z_2$, we draw the error terms from a two-dimensional Gaussian copula in each iteration. The exact simulation parameters are given by
		$\mu_{Z_i} = 0$, $\sigma_i = 1$ for $i = 1,2,3$, $\eta_1 = 0.12$, $\eta_2 = -0.18$ and $\eta_3 = 0.12$. The covariance between the first two components is 0.7. The third component $Z_3$ is simulated as independent. The vectors $\tau^2_i\in \R^{|V|}$ are computed with the formula $\tau_i^2(s) = \{	\operatorname{diag}^{\sim}(	\text{inv}(I-\eta_i H)\,	)	\}^{-1}(s)$ for $i=1,2,3$. Here we denote the inverse of a matrix by $\operatorname{inv}$, the operator that maps the diagonal of a matrix to a vector by $\operatorname{diag}^{\sim}$ and the elementwise inversion of a vector by $\{\cdot\}^{-1}$. Afterwards, we transform the first two components $Z_1$ and $Z_2$ with the distribution function of a two-dimensional standard normal distribution onto the unit square and obtain the random field $(X_1,X_2)$. We specify the mean function in this example as
\[
		m\colon \R^2 \rightarrow \R,\, (x_1,x_2)\mapsto (2 - 3x_2^2+ 4x_2^4 ) \, \exp\big(- ( 2 x_1 - 1)^2	 \big).
\]

Figure~\ref{fig:Regression2dExample1_Theoretic} shows the function plot of $m$. We simulate $Y(s) = m( X_1(s), X_2(s) ) + Z_3(s)$ and we use two different wavelet scaling functions for the estimation of $m$: we perform the first regression with the Haar scaling function $\phi = 1_{[0,1)}$ and the second with Daubechies 4-scaling function $D4$ (which is also known as $db2$). Figure~\ref{fig:Regression2DExample1_Fig2} displays the results: Figure~\ref{fig:Regression2dExample1_D2} depicts the estimate with Daubechies 4-scaling function, Figure~\ref{fig:Regression2dExample1_Haar} the one with the Haar scaling function. Table~\ref{TableExampleBivariate} gives the $L^2$-error statistics. Note that the $L^2$-error minimizing $j$ for the Haar wavelet differs from the $j$ minimizing the error for Daubechies 4-scaling function. Moreover, the $D4$ wavelet outperforms the Haar scaling function in this example. Table~\ref{TableExampleBivariate} also shows the $L^2$-error statistics for the same regression problem but with i.i.d.\ data of the same sample size. Note that the estimator obtained from i.i.d.\ data is slightly better than the estimate from the random field for both wavelet types.
\end{example}

\begin{example}[Univariate non-parametric regression]
In this example we consider a one-dimensional spatial regression problem based on a graph which represents Australia divided into administrative divisions on the statistical area level 3. The graph consists of 330 nodes and 1600 edges, cf. Figure~\ref{fig:AustraliaGraphSA3}. This graph is highly connected in certain regions relative to the four-nearest neighbour structure on a lattice.

\begin{figure}
			\begin{subfigure}[b]{\textwidth} \centering
	\centering
		\includegraphics[height = 8cm, keepaspectratio]{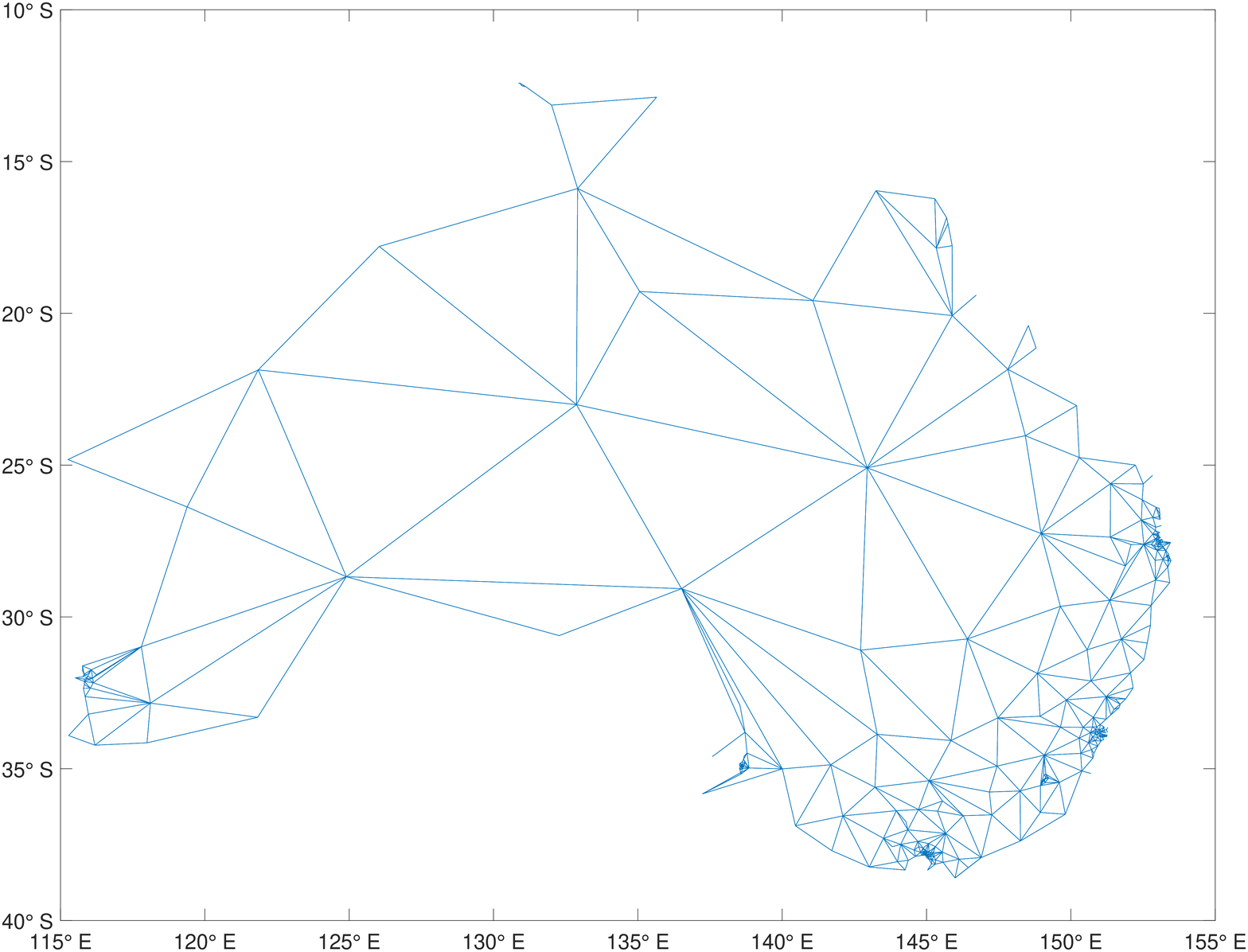}
	\caption{Administrative divisions of mainland Australia}
	\label{fig:AustraliaGraphSA3}
\end{subfigure}

	\begin{subfigure}[b]{\textwidth} \centering
		\includegraphics[height = 8cm, keepaspectratio, trim = 0 0 0 0, clip=true]{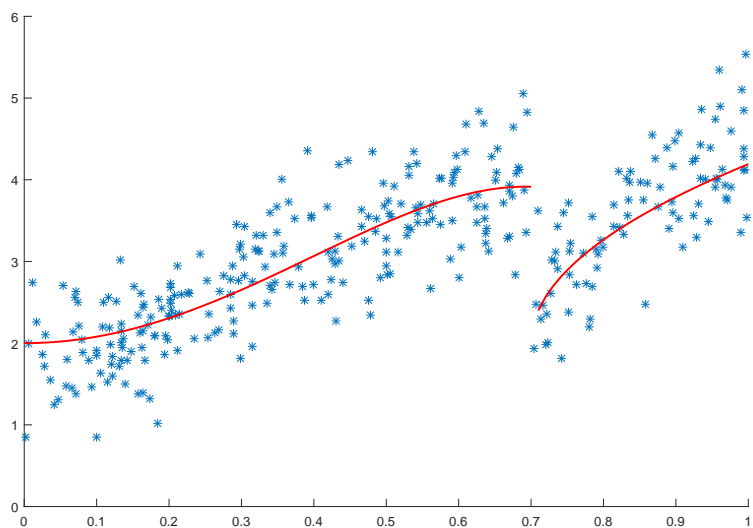}
	\caption{A realization of $X$ and the mean function $m$}
	\label{fig:Regression1dExample1_SimulatedvsTheoretic}
\end{subfigure}

\caption{Graph and true regression function.}
\label{fig:RegressionFunctionEstimates}
\end{figure}

\begin{figure}
	\begin{subfigure}[b]{\textwidth} \centering
		\includegraphics[height = 8cm, keepaspectratio, trim = 0 0 0 0, clip=true]{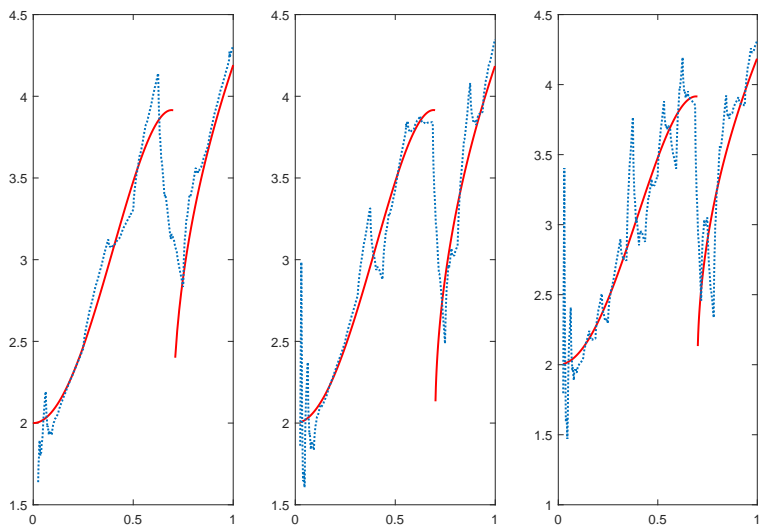}
	\caption{Estimate of ${m}$ with the D4 scaling function for $j=3,4,5$.}
	\label{fig:Regression1dExample1_D4}
\end{subfigure}

		\begin{subfigure}[b]{\textwidth} \centering
		\includegraphics[height = 8cm, keepaspectratio, trim = 0 0 0 0, clip=true]{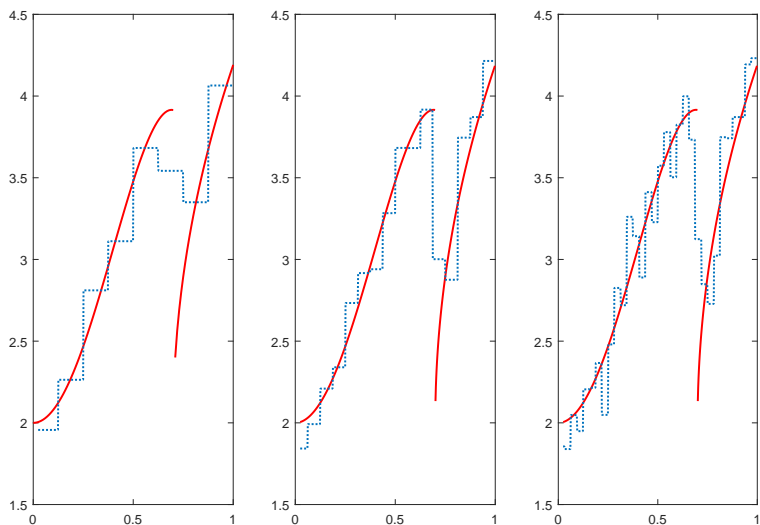}
	\caption{Estimate of ${m}$ with the Haar scaling function for  $j=3,4,5$.}
	\label{fig:Regression1dExample1_Haar}
\end{subfigure}
\caption{The estimates for the univariate regression problem.}
\label{fig:RegressionFunctionEstimates2}
\end{figure}

We simulate two Gaussian random fields $Z_1$ and $Z_2$ on $G$ with marginal means 0 and marginal variances 1 with the Markov chain method as in Example~\ref{SimGMVNEx}. The parameter space for $\eta$ contains the interval $(-0.3060, 0.1615)$. We choose $\eta$ for both components equal to 0.15 and run $15k$ iterations of the MCMC-algorithm. Then we use the inverse of the standard normal distribution to retransform the component $Z_1$ onto the unit interval and obtain the random field $X$ with marginals uniformly distributed on $[0,1]$. The conditional mean function is defined as the discontinuous function
\[
		m\colon [0,1] \rightarrow \R,\, x\mapsto \big(2 + 8x^2 - (1.7x)^4\big) 1_{\{ x \le 0.7 \}} + 2 \big(\sqrt{ 4 (x - 0.7)}+ 1\big) 1_{ \{0.7 < x \} }.
\]

We define $Y(s) = m( X(s) ) + Z_2(s) / 2$. Figure~\ref{fig:Regression1dExample1_SimulatedvsTheoretic} depicts the simulated random field. Figure~\ref{fig:Regression1dExample1_D4} shows the estimation with the Daubechies 4-scaling function, while Figure~\ref{fig:Regression1dExample1_Haar} depicts the result for the Haar wavelet. We infer from Table~\ref{TableExampleUnivariate} that the $L^2$-error is minimized for the level $j=4$ in all cases. Note that in this example the Daubechies wavelet always outperforms the Haar wavelet when measured by the $L^2$-error. Again, the $L^2$-error of the independent reference estimate is slightly better in each case.

\begin{table}[ht]
\begin{center}
\begin{tabular}{|| c || c | c || c | c ||  }
\hline
\hline
	&	\multicolumn{2}{| c ||}{Estimates on the graph} & \multicolumn{2}{| c ||}{Independent reference estimates} \\
\hline
\hline	
	j &	D4 wavelet & Haar wavelet & D4 wavelet & Haar wavelet \\
\hline
\hline
\multirow{2}{*}{2}	&	0.326	&	0.405	&	0.321	&	0.401	\\
	&	(0.031)	&	(0.059)	&	(0.029)	&	(0.061)	\\
\hline									
\multirow{2}{*}{3}	&	0.241	&	0.344	&	0.233	&	0.341	\\
	&	(0.033)	&	(0.064)	&	(0.035)	&	(0.067)	\\
\hline									
\multirow{2}{*}{4}	&	0.224	&	0.284	&	0.213	&	0.280	\\
	&	(0.077)	&	(0.073)	&	(0.062)	&	(0.078)	\\
\hline									
\multirow{2}{*}{5}	&	0.319	&	0.349	&	0.299	&	0.333	\\
	&	(0.172)	&	(0.117)	&	(0.134)	&	(0.093)	\\
\hline									
\multirow{2}{*}{6}	&	0.772	&	0.753	&	0.712	&	0.727	\\
	&	(0.437)	&	(0.213)	&	(0.380)	&	(0.212)	\\
\hline
\hline
\end{tabular}
\caption{$L^2$-error of the univariate regression problem: the estimated mean and in parentheses the estimated standard deviation for a level $j=2,\ldots,6$. The first two columns give the results for the random field, the last two columns those of an independent reference sample of the same size.}
\label{TableExampleUnivariate}
\end{center}
\end{table}
\end{example}

\section{Proofs of the Results in Section~\ref{Section_NonParaRegGeneral} and Section~\ref{Section_NonParaRegWavelets} }
\label{Section_Proofs}

The first lemma is a consequence of the coupling lemma of \cite{berbee1979random}. In the case of $\beta$-mixing, we construct another sample $(X^*,Y^*)$ which has good properties.

\begin{lemma}\label{CouplingBetaMixing}
Let $(X,Y)$ be a random field on $\Z^N$. For each $n\in\N_+^N$ and $q\in\N_+$ such that $2q < \min\{n_i: i=1,\ldots,N \}$, there is a partition of $I_n$ which is denoted by $\{I(l,u):l=1,\ldots,2^N, u=1,\ldots,R\}$ and collection of random variables $Z^*(l,u) = ((X^*(s),Y^*(s)):s\in I(l,u) ) \in \R^{(d+1)q^N}$ such that for each $l$ the collection $Z^*(l,1),\ldots,Z^*(l,R)$ is independent
and $\p( Z^*(l,u) \neq Z(l,u) ) = \beta(q)$ where $Z(l,u)=((X(s),Y(s)):s\in I(l,u) )$ and $\beta$ is the $\beta$-mixing coefficient of $(X,Y)$. Moreover, $Z^*(l,u)$ is independent of $Z(l,1),\ldots,Z(l,u-1)$ for each $u=1,\ldots,R$ and for each $l=1,\ldots,2^N$.
\end{lemma}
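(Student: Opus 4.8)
The plan is to combine a two-scale block decomposition of the index set $I_n$ with an iterated application of the coupling lemma of \cite{berbee1979random}.

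First I would tile $I_n$ by cubes of side length $q$; when $q$ does not divide some $n_i$, the boundary remainder in that direction is absorbed into the last cube, so every block is a box of side between $q$ and $2q-1$ and in particular contains at least $q^N$ lattice sites (identifying such a block with its first $q^N$ coordinates is what is recorded by the writing $\R^{(d+1)q^N}$, and is harmless). I would index the blocks by their block-coordinate $m=(m_1,\dots,m_N)$ and group them by the parity vector $(m_1\bmod 2,\dots,m_N\bmod 2)\in\{0,1\}^N$, giving the $2^N$ classes; inside class $l$ I enumerate the blocks $I(l,1),\dots,I(l,R)$ (padding with empty sets should the classes have unequal cardinalities). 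The hypothesis $2q<\min_i n_i$ ensures at least two blocks along each coordinate, hence every class is nonempty. The elementary geometric fact to check is that any two distinct blocks of the same class are at $d_\infty$-distance at least $q$: they differ in some coordinate $i$ by an even, hence $\ge 2$, number of block-steps, so a full block of width $\ge q$ lies strictly between them along coordinate $i$. Consequently, writing $J(l,u):=\bigcup_{v<u}I(l,v)$ and $Z(l,u)=((X(s),Y(s)):s\in I(l,u))$, one has $d_\infty(J(l,u),I(l,u))\ge q$ and therefore $\beta(\cF(J(l,u)),\cF(I(l,u)))\le\beta(q)$.

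Next I would construct the $Z^*(l,u)$ by induction on $u$, separately for each fixed $l$, using Berbee's lemma in the form: for a Polish-valued random variable $W$, a sub-$\sigma$-algebra $\cM$, and a $\mathrm{Unif}[0,1]$ variable independent of $\cM\vee\sigma(W)$, there is $W^*$, measurable with respect to $\cM\vee\sigma(W)\vee\sigma(\text{that uniform})$, distributed as $W$, independent of $\cM$, with $\p(W^*\ne W)=\beta(\cM,\sigma(W))$. Put $Z^*(l,1):=Z(l,1)$. At step $u\ge 2$, apply the lemma with $W=Z(l,u)$ and $\cM=\cF(J(l,u))\vee\sigma(\text{all auxiliary uniforms drawn at steps }1,\dots,u-1)$, using a fresh uniform independent of the field and of the earlier uniforms. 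Since adjoining a $\sigma$-algebra independent of $\cF(J(l,u))\vee\cF(I(l,u))$ leaves the $\beta$-coefficient unchanged, $\beta(\cM,\sigma(Z(l,u)))=\beta(\cF(J(l,u)),\cF(I(l,u)))\le\beta(q)$, which yields the bound on $\p(Z^*(l,u)\ne Z(l,u))$. One checks that $Z^*(l,v)$ for $v<u$ is $\cM$-measurable (it is a function of $\cF(J(l,v{+}1))$ and of uniforms up to step $v\le u-1$), as is $Z(l,v)$; hence independence of $Z^*(l,u)$ from $\cM$ gives at once the claimed independence from $Z(l,1),\dots,Z(l,u-1)$ and, by the induction hypothesis, the mutual independence of $Z^*(l,1),\dots,Z^*(l,R)$. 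If $\pspace$ is not rich enough to carry the auxiliary uniforms one enlarges it, which does not affect any distributional conclusion used later.

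I expect the main obstacle to be the bookkeeping of independence through the iteration: decoupling $Z(l,u)$ merely from $\cF(J(l,u))$ loses control of its joint law with the already-built $Z^*(l,v)$, while decoupling directly from $\sigma(Z(l,1),\dots,Z^*(l,1),\dots)$ would seem to invoke a mixing coefficient exceeding $\beta(q)$. The device that removes the difficulty is to route all the extra randomness through uniforms independent of the field, so that the effective conditioning $\sigma$-algebra $\cM$ keeps the same $\beta$-distance to $\cF(I(l,u))$ as $\cF(J(l,u))$ has; the two facts the argument truly rests on are thus that $\beta$ is invariant under adjoining such independent $\sigma$-algebras, and the $d_\infty$-separation estimate for same-parity blocks. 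The remaining points — handling the remainder when $q\nmid n_i$, and reading $\p(Z^*(l,u)\ne Z(l,u))=\beta(q)$ as the inequality $\le\beta(q)$ that is actually used in the sequel — are routine.
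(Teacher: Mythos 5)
Your proposal is correct and follows essentially the same route as the paper: your parity-class grouping of $q$-cubes is exactly the paper's decomposition of $I_{n^*}$ into $2q$-blocks each containing $2^N$ subcubes of side $q$ (your index $l$ is the position of the subcube within its big block), and both arguments then apply Berbee's coupling lemma recursively along each parity class, with the same $d_\infty$-separation of at least $q$ justifying the bound by $\beta(q)$. The paper merely sketches the recursion by reference to Carbon, Tran and Wu, whereas you spell out the $\sigma$-algebra bookkeeping via auxiliary uniforms; your reading of the stated equality as the inequality $\le\beta(q)$ actually used later is also the right one.
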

\begin{proof}
The proof follows as in \cite{carbon1997kernel} where a similar coupling result is established under the $\alpha$-mixing condition. We only sketch the main parts. Firstly, we give the construction of the partition. We choose $R_1, \ldots, R_N$ such that
\begin{equation*}
2q (R_i-1) < n_i \le 2q R_i =: n_i^* \text{ for each } k=1, \ldots, N.
\end{equation*}
For the $k$-th coordinate direction, we partition the summation index set
$\{1, \ldots, n_i^*\} \supseteq \{ 1, \ldots, n_i\}$ into $R_i$ subsets each
consisting of two disjoint intervals of length $q$. So, we
have a union of $2 R_i$ intervals of length $q$.

Combining the partitions in all $N$ coordinate directions, we get a partition of the
$N$-dimensional rectangle $I_{n^*} = \{ s \in \Z^N; e_N \le s \le n^* \} \supseteq I_{n}$ into $ R  = R_1 \cdot \ldots \cdot R_N$ blocks containing $(2q)^N$ points of the $N$-dimensional integer lattice each. Within each block, there are $2^N$ smaller subsets, which are $N$-dimensional rectangles with all edges of length $q$. Write $I(l,u)$ for the $l$-th subset in the $u$-th block, $l=1,\ldots,2^N$ and $u=1,\ldots, R $. Its cardinality is $q^N$. Moreover, using the requirement on $n$, we have that $R_i \ge 2$ for each $k=1,\ldots,2^N$. Thus, $q^N R \le |I_n|$ and $|I_n|/2^N \le q^N R$. The subcubes $I(l,u)$ have the property that for fixed $l$ the distance between $I(l,u)$ and $I(l,u')$ is at least $q$.

Secondly, we apply recursively (as in \cite{carbon1997kernel}) the lemma of \cite{berbee1979random} to the collection of random variables $Z(l,1),\ldots,Z(l,R)$ for each $l$. We obtain random variables $Z^*(l,1),\ldots,Z^*(l,R) \in \R^{(d+1)q^N}$ with the desired properties. The claim follows now when defining the $X^*(s)$ and the $Y^*(s)$ such that $Z^*(l,u) = ((X^*(s),Y^*(s)):s\in I(l,u))$ $a.s.$ for each $u=1,\ldots,R$ and for each $l=1,\ldots,2^N$.
\end{proof}

The next proposition is a well-known result of \cite{gyorfi} which states sufficient conditions for a consistent estimator. It holds as well for dependent data because in the proof of the proposition those terms which are related to the dependence structure of the data converge to zero by assumption. So it is our task to verify these assumptions later. More precisely, it is assumed that the function classes can approximate the regression function $m$ arbitrarily exactly both in part (a) and in part (b) of this proposition. In our case this assumption does not depend on the data. However, the second requirement in both parts of the proposition is affected by the dependence structure of the data: here it is assumed that a certain empirical mean uniformly converges to the corresponding true mean for each possible function in the sieve. This requirement crucially depends on the data and we can verify this assumption later.

\begin{proposition}[Modified version of \cite{gyorfi} Theorem 10.2]\label{propConsNReg}
Let $\pspace$ be a probability space endowed with the random field $(X,Y)$ which satisfies the model \eqref{lsqI} and Condition \ref{regCond0} such that each $X(s)$ is $\R^d$-valued and each $Y(s)$ is $\R$-valued. Let $\cF_{n(k)} \subseteq L^2\left( \mu_X \right)$ be a class of functions $f: \R^d\rightarrow\R$ for each $k \in\N_+$. Let $( \rho_{n(k)}: k\in \N) \subseteq\R_+$ be an sequence which increases to infinity. Denote the truncated least-squares estimate of $m$ from \eqref{lsqIII} by $\hat{m}_n$. In addition, let the map from \eqref{supMeas2} be $\cA$-$\cB(\R)$-measurable.

(a) If for all $L>0$ both
\begin{align*}
			& \lim_{n \rightarrow \infty} \E{ \inf_{ f\in\cF_n, ||f||_{\infty} \le \rho_n}  \norm{ f- m}_{L^2(\mu_X)} } = 0 \text{ and } \\
		& \lim_{n \rightarrow \infty} \mathbb{E} \Biggl[	\sup_{f \in T_{\rho_n}\cF_n }\Bigg| |I_{n} |^{-1} \sum_{s\in I_{n} } \Big( T_L Y(s) - f(X(s)) \Big)^2 \\
		&\qquad\qquad\qquad\qquad\qquad\qquad - \E{ \Big( T_L Y (e_N) - f(X(e_N ) \Big)^2 } \Bigg| \Biggl] = 0,
\end{align*}
then
$
	\lim_{n \rightarrow\infty} \E{ \int_{\R^d} \left(	\hat{m}_n -m \right)^2 \intd{\mu}_X } = 0
	$.
	
(b) If $\lim_{n\rightarrow\infty} |I_{n} |^{-1} \sum_{s\in I_{n} } |Y(s) - T_L Y(s)|^2 = \E{ |Y(e_N)  - T_L Y(e_N)  |^2 }$ a.s. and if
\begin{align*}
		&\lim_{n \rightarrow \infty} \inf_{ f\in\cF_n, ||f||_{\infty} \le \rho_n}  \norm{ f- m}_{L^2(\mu_X)} = 0 \quad a.s. \text{ and } \\
&\lim_{n \rightarrow \infty} 	\sup_{f \in T_{\rho_n}\cF_n }\Bigg| |I_{n}|^{-1} \sum_{n\in I_{n}} \Big( T_L Y(s) - f(X(s)) \Big)^2 \\
&\qquad\qquad\qquad\qquad - \E{ \Big( T_L Y(e_N)  - f(X(e_N) ) \Big)^2 } \Bigg|  = 0\quad a.s.
\end{align*}
for all $L>0$, then
$
	\lim_{n\rightarrow\infty} \int_{\R^d} \left(	\hat{m}_n -m \right)^2 \intd{\mu}_X = 0$ $a.s.$
\end{proposition}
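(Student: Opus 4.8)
The plan is to run the proof of Theorem~10.2 in \cite{gyorfi} with the i.i.d.\ laws of large numbers there replaced by the hypotheses of the proposition: the error decomposition used in \cite{gyorfi} is purely algebraic, so the only genuinely new point is to recognise that the terms depending on the dependence structure of the data are exactly the expressions assumed to vanish. I sketch part~(a); part~(b) is the pathwise twin. On a (possibly enlarged) probability space introduce a pair $(X^*,Y^*)$ independent of $D_n\coloneqq\{(X(s),Y(s)):s\in I_n\}$ and distributed as $\cL_{(X(e_N),Y(e_N))}$. Since the marginals of $(X,Y)$ all agree (Condition~\ref{regCond0}) and $m$ is the regression function, conditioning on $D_n$ gives, for every $D_n$-measurable $h$, the Pythagorean identity $\E{|h(X^*)-Y^*|^2\mid D_n}=\E{|h(X^*)-m(X^*)|^2\mid D_n}+\norm{\varsigma}_{L^2(\mu_X)}^2$; with $h=\hat m_n$ this yields $\E{\int_{\R^d}|\hat m_n-m|^2\intd{\mu_X}}=\E{|\hat m_n(X^*)-Y^*|^2}-\norm{\varsigma}_{L^2(\mu_X)}^2$, and one notes $Y^*\in L^2$ because $\E{|Y^*|^2}=\norm{m}_{L^2(\mu_X)}^2+\norm{\varsigma}_{L^2(\mu_X)}^2<\infty$.

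Fix $L>0$ and set $m_L(x)\coloneqq\E{T_LY^*\mid X^*=x}$, so that $\norm{m_L-m}_{L^2(\mu_X)}\le\norm{T_LY^*-Y^*}_{\p,2}\to0$ and $\E{|m_L(X^*)-T_LY^*|^2}\to\norm{\varsigma}_{L^2(\mu_X)}^2$ as $L\to\infty$. By the triangle inequality it suffices to prove $\E{|\hat m_n(X^*)-m_L(X^*)|^2}\to0$ for each fixed $L$ and then let $L\to\infty$. Applying the Pythagorean identity to the truncated response gives $\E{|\hat m_n(X^*)-m_L(X^*)|^2}=\E{|\hat m_n(X^*)-T_LY^*|^2}-\E{|m_L(X^*)-T_LY^*|^2}$, and $\E{|\hat m_n(X^*)-T_LY^*|^2}$ differs from the empirical loss $\E{|I_n|^{-1}\sum_{s\in I_n}|\hat m_n(X(s))-T_LY(s)|^2}$ by a quantity which, since $\hat m_n\in T_{\rho_n}\cF_n$, is dominated by
\begin{align*}
\E{\sup_{g\in T_{\rho_n}\cF_n}\Bigl|\,|I_n|^{-1}\textstyle\sum_{s\in I_n}\bigl(T_LY(s)-g(X(s))\bigr)^2-\E{\bigl(T_LY(e_N)-g(X(e_N))\bigr)^2}\Bigr|}\xrightarrow[n\to\infty]{}0
\end{align*}
by the second hypothesis of~(a). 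This supremum is the only place the spatial dependence enters, and it is assumed away.

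For the empirical loss I would pass in turn from $T_LY$ to $Y$ and from $\hat m_n=T_{\rho_n}m_n$ to $m_n$ (using $\norm{\hat m_n}_\infty\le\rho_n$), which produces only remainder terms bounded by multiples of $\E{|Y(e_N)|^2\,\1{|Y(e_N)|>L}}$ and $\E{|Y(e_N)|^2\,\1{|Y(e_N)|>\rho_n}}$; the least-squares property of $m_n$ then replaces it by an arbitrary deterministic $f\in\cF_n$ with $\norm{f}_\infty\le\rho_n$, and equality of marginals turns $\E{|I_n|^{-1}\sum_{s\in I_n}|f(X(s))-Y(s)|^2}$ into $\norm{f-m}_{L^2(\mu_X)}^2+\norm{\varsigma}_{L^2(\mu_X)}^2$. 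Taking the infimum over such $f$, invoking the first hypothesis of~(a), then letting $n\to\infty$ (which kills the $\rho_n$-remainder and the supremum) and finally $L\to\infty$ (which kills the remaining $L$-dependent terms, as $\E{|m_L(X^*)-T_LY^*|^2}\to\norm{\varsigma}_{L^2(\mu_X)}^2$), proves~(a). Part~(b) runs the same chain for a fixed realisation with the outer expectations removed: the supremum vanishes almost surely by the third hypothesis, the $Y$-truncation remainders are controlled by the first hypothesis $|I_n|^{-1}\sum_{s\in I_n}|Y(s)-T_LY(s)|^2\to\E{|Y(e_N)-T_LY(e_N)|^2}$ a.s.\ (playing the role of Kolmogorov's strong law in the classical proof), the approximation term by the second hypothesis, and one finishes by intersecting the exceptional null sets over a countable sequence $L_j\uparrow\infty$.

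I expect the main obstacle to be the bookkeeping around the two truncations: the estimator is clipped at the growing level $\rho_n$ while the response is clipped at the fixed level $L$, and one must check that every remainder this produces is of the harmless form $\E{|Y(e_N)|^2\,\1{|Y(e_N)|>L}}$ or $\E{|Y(e_N)|^2\,\1{|Y(e_N)|>\rho_n}}$, with no constant larger than a fixed multiple in front of the approximation error $\norm{f-m}_{L^2(\mu_X)}^2$, so that these terms disappear once $n\to\infty$ is taken before $L\to\infty$. This is precisely the delicate passage already carried out in \cite{gyorfi}; everything else transfers verbatim, the dependence of the data being felt only through the empirical-process supremum above, which the hypotheses neutralise.
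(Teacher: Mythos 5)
Your proposal is correct and coincides with the paper's treatment: the paper does not reprove this proposition but merely observes that the proof of Theorem 10.2 in \cite{gyorfi} transfers verbatim, because the dependence in the data enters only through the sieve-approximation term and the uniform convergence of the empirical squared loss over $T_{\rho_n}\cF_n$, both of which are taken as hypotheses here. Your sketch fills in exactly those steps (the Pythagorean decomposition via an independent copy $(X^*,Y^*)$, the $L$- and $\rho_n$-truncation bookkeeping, and the order of limits $n\to\infty$ before $L\to\infty$, with part~(b) as the pathwise analogue using the assumed a.s.\ convergence of $|I_n|^{-1}\sum_{s\in I_n}|Y(s)-T_LY(s)|^2$) and correctly identifies the empirical-process supremum as the only place the spatial dependence is felt.
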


We proceed with the proof of the first main theorem of Section \ref{Section_NonParaRegGeneral}.
\begin{proof}[Proof of Theorem \ref{ConsistencyTruncatedLeastSquaresGeneral}]
We verify that in both dependence scenarios the sufficient criteria of Proposition~\ref{propConsNReg} are satisfied for the given choices of the parameters. The structure of the proof is quite similar to the one of Theorem 10.3 in \cite{gyorfi}. Therefore we sketch those parts which differ because of the dependence in the data and the assumed covering condition (Condition~\ref{coveringCondition}). The approximation property of the function classes is satisfied by assumption. Moreover, we can assume w.l.o.g.\ that $L < \rho_n$ in both cases because $\rho_n$ tends to infinity. We have to consider the function classes
\begin{align*}
		\cH_n &\coloneqq \big\{ h: \R^d\times\R\rightarrow \R, h(x,y) = |f(x)-T_L(y)|^2 \\
		&\qquad\qquad\qquad\qquad \text{ for all } (x,y) \in \R^d\times\R, \text{ for some } f \in T_{\rho_n}\cF_n \big\}.
\end{align*}
We begin with the case of $\alpha$-mixing data. From Condition~\ref{coveringCondition} we obtain a uniform bound on the $\epsilon$-covering number $\mathsf{N}\big(\epsilon,\cH_n,\norm{\,\cdot\,}_{L^1(\nu)} \big)$ which we denote by $H_{\cH_n}(\epsilon)$, here $\nu$ is an arbitrary probability measure with equal mass concentrated at certain points $z_1,\ldots,z_u \in\R$, $u\in\N_+$. Provided that $L \le \rho_n$, we have for the covering number of this class $\cH_n$ 
\[
		H_{\cH_n} \left(\frac{\epsilon}{32} \right) \le H_{T_{\rho_n} \cF_n} \left(\frac{\epsilon}{32(4\rho_n) } \right) = H_{T_{\rho_n} \cF_n}\left(\frac{\epsilon}{128 \rho_n } \right) = \exp \kappa_{k}(\epsilon/32,\rho_n).
\]
For details on the first inequality, see the proof of Theorem 10.3 in \cite{gyorfi}. Note that the functions in $\cH_n$ are bounded by $4\rho_n^2$ if $L\le \rho_n$. By assumption,
$
	\rho_n^4 \kappa_n(\epsilon/32,\rho_n) \, \left( \log |I_n|\right)^2  \,\big/\, |I_n|^{1/N} \rightarrow 0 \text{ as } n \rightarrow \infty.
$
We use Theorem \ref{USLLNM} to give an upper bound on the following probability (note that we only need to consider the exponential term in \eqref{USLLNMEq0} which decays at a slower rate)
\begin{align}
\begin{split}\label{cRM1}
		&\p\Biggl(	\sup_{f\in T_{\rho_n} \cF_n} \Biggl| 	\frac{1}{|I_{n} | } \sum_{s \in I_{n} } \left|f(X(s))- T_L Y(s)	\right|^2 \\
		&\qquad\qquad\qquad\qquad\qquad\qquad - \E{ \left|f(X(e_N))-T_L Y(e_N)	\right|^2}	\Biggl|	> \epsilon \Biggl) 
		\end{split} \\
		\begin{split}\label{cRM2}
		&\le A_1 \,\exp\left\{ \kappa_n( \epsilon/32, \rho_n) \right\}\,  \exp\left\{	- \frac{A_2 \, |I_n|^{1/N } \, \epsilon^2}{16\rho_n^4 + 4\rho_n^2 \epsilon \left( \log |I_n| \right)^2  } \right\} \\
		& = A_1 \exp \Biggl\{	- \frac{ \epsilon^2\, |I_n|^{1/N }}{16\rho_n^4 + 4\rho_n^2 \epsilon \left( \log |I_n| \right)^2   } \\
		&\qquad\qquad\qquad\qquad \left( A_2 - \frac{ \kappa_n(\epsilon/32,\rho_n) [16\rho_n^4 + 4\rho_n^2 \epsilon \left( \log |I_n| \right)^2] }{ \epsilon^2\, |I_n|^{1/N }   }		\right) \Biggl\},
\end{split}\end{align}
for suitable constants $A_1$ and $A_2$. The weak consistency follows from \eqref{cRM2}: let $\epsilon>0$ be arbitrary but fixed, then
\begin{align*}
		&\E{\sup_{f\in T_{\rho_n} \cF_n} \left| 	\frac{1}{|I_n|} \sum_{s\in I_n} \left|f(X(s))- T_L Y(s)	\right|^2 - \E{ \left|f(X(e_N) )- T_L Y(e_N )	\right|^2}	\right| } \\
		&\le \epsilon + A_1 \exp\left\{ \kappa_n(\epsilon, \rho_n) \right\} \, \int_{\epsilon}^{\infty}  \exp\left\{	- \frac{A_2 \, |I_n|^{1/N } \, t^2}{16\rho_n^4 + 4\rho_n^2 t \left( \log |I_n| \right)^2  } \right\}  \intd{t} \rightarrow \epsilon,
\end{align*}
as $n\rightarrow \infty$. Concerning the $a.s.$ convergence of the estimate, we find that under the condition of $\alpha$-mixing and stationarity the random variables $\{  \left| Y(s) - T_L Y(s)	\right|^2: s\in\Z^N \}$ are ergodic, see Theorem B.4 in \cite{krebs2017orthogonal}. This implies that
$$
 \lim_{n\rightarrow\infty} |I_{n} |^{-1} \sum_{s\in I_{n} } |Y(s) - T_L Y(s)|^2 = \E{ |Y(e_N)  - T_L Y(e_N)  |^2 }\quad a.s.
$$
for all $L>0$. Furthermore, if additionally 
\[
	\rho_n^4  \left( \log |I_n| \right)^4	/ |I_n|^{1/N }  \rightarrow 0 \text{ as } n \rightarrow \infty,
\]
\eqref{cRM2} remains summable for a sequence of index sets $I_{n(k)}$ which satisfies the condition in \eqref{Cond_IndexSet1}. Thus, an application of the Borel-Cantelli Lemma to the same equation yields that the estimator is strongly universally consistent. This finishes the case for $\alpha$-mixing data.

Now consider the case of $\beta$-mixing data. Again, we assume that $\rho_n > L$. Therefore we use the partition of $I_n$ which is provided by Lemma~\ref{CouplingBetaMixing} for the choice $q= \ceil{2 \log |I_n| / c_1 }$. As in \cite{tran1990kernel} we assume that $R_i q = n_i$ for each $i=1,\ldots,N$. We use the coupled random field $(X^*,Y^*)$ to obtain the estimator $\hat{m}^*_n$ of the regression function $m$. We split the integrated error as follows
\begin{align}\label{cRM3}
		\int_{\R^d} |\hat{m}_n - m|^2 \intd{\mu_X} \le 2 \int_{\R^d} |\hat{m}^*_n - m|^2 \intd{\mu_X} + 2 \int_{\R^d} |\hat{m}^*_n - \hat{m}_n|^2 \intd{\mu_X}
\end{align}
Exploiting the properties of $(X^*,Y^*)$, we find that the second term is at most
$$
		\int_{\R^d} |\hat{m}^*_n - \hat{m}_n|^2 \intd{\mu_X} \le 4 \rho_n^2 \1{ (X^*(s),Y^*(s)) \neq (X(s),Y(s)) \text{ for one } s\in I_n }.
$$
Using that $\beta(q) \in \cO( |I_n|^{-2} )$, we have the following bound for the expectation
\begin{align*}
		\E{\int_{\R^d} |\hat{m}^*_n - \hat{m}_n|^2 \intd{\mu_X} } &\le 4 \rho_n^2  \beta(q) \le C \rho_n^2 / |I_n|^2  \rightarrow 0, \quad n\rightarrow \infty,
\end{align*}
where we use that by assumption $\rho_n^4 / |I_n|$ vanishes. Moreover, we have
\begin{align*}
		&\sum_{k=1}^\infty \p\left( \int_{\R^d} |\hat{m}^*_{n(k)} - \hat{m}_{n(k)}|^2 \intd{\mu_X} > \epsilon \right) \\
		&\le 4 \epsilon^{-1} \sum_{k=1}^\infty \rho_{n(k)}^2 \beta(q(n(k)) ) \le C \sum_{k=1}^\infty |I_{n(k)}|^{-1.5} < \infty.
\end{align*}
Hence, $\int_{\R^d} |\hat{m}^*_n - \hat{m}_n|^2 \intd{\mu_X}$ both vanishes in the mean and $a.s.$ Consequently, the first integral in \eqref{cRM3} remains and we need to study the probability in \eqref{cRM1} in this scenario, it equals
\begin{align}
		&\p\Biggl(	\sup_{f\in T_{\rho_n} \cF_n} \Biggl| 	\frac{1}{|I_{n} | } \sum_{l=1}^{2^N} \sum_{u=1 }^R \sum_{s \in I(l,u) } \left|f(X^*(s))- T_L Y^*(s)	\right|^2 \nonumber \\
		&\qquad\qquad\qquad\qquad\qquad\qquad - \E{ \left|f(X^*(e_N))-T_L Y^*(e_N)	\right|^2}	\Biggl|	> \epsilon \Biggl) \nonumber \\
		\begin{split}\label{cRM4}
		&\le \sum_{l=1}^{2^N} \p\Biggl(	\sup_{f\in T_{\rho_n} \cF_n} \Biggl| 	\frac{1}{R}  \sum_{u=1 }^R \Biggl( \sum_{s \in I(l,u) } \left|f(X^*(s))- T_L Y^*(s)	\right|^2 \\
		&\qquad\qquad\qquad\qquad\qquad\qquad - \E{ \left|f(X^*(e_N))-T_L Y^*(e_N)	 \right|^2}	\Biggl)\Biggl|	> \frac{|I_{n} | \epsilon}{2^N R} \Biggl). 
		\end{split}
\end{align}
Using the properties of the coupled process and the stationarity of $(X,Y)$, we see that the summands over the index sets $I(l,1),\ldots,I(l,R)$ are i.i.d.\ for each $l=1,\ldots,2^N$. Consequently, we can apply Theorem 9.1 in \cite{gyorfi}. Note that in the proof of this theorem it is only necessary that the data is independent but not that it is identically distributed. Hence, we obtain for \eqref{cRM4} the bound
\begin{align}
		&2^{N+3} H_{\cH_n} \left(\frac{\epsilon q^N}{2^{N+3}} \right) \exp\left(- \frac{\epsilon^2 |I_n|}{2^{2N+13} q^N \rho_n^4 }		\right) \nonumber \\
		&\le 2^{N+3} H_{T_{\rho_n} \cF_n}  \left(\frac{\epsilon q^N}{2^{N+3} (4\rho_n^2) } \right) \exp\left(- \frac{\epsilon^2 |I_n|}{2^{2N+13} q^N \rho_n^4 }		\right) \nonumber \\
		&= 2^{N+3} \exp\left( \kappa_n\left( \frac{\epsilon q^N }{ 2^{N+3}} , \rho_n \right) - \frac{\epsilon^2 |I_n|}{2^{4N+13}(c_1)^{-N} (\log |I_n|)^N \rho_n^4 }			\right) \nonumber 
\end{align}
using the definition of $\kappa_n$ and of the block size $q$. Note that the factor $q^N$ inside $\kappa_n$ can be neglected as it only decreases $\kappa_n$ marginally if $\epsilon>0$ is fixed. Now, the same computations as in the case of $\alpha$-mixing data yield the result. We do not go into the details.
\end{proof}

The proof of Corollary~\ref{ConsistencyTruncatedLeastSquaresGeneral_Corollary} requires the concept of the Vapnik-Chervonenkis-dimension (VC-dimension). The definition of the VC-dimension is rather technical and can be found in the book of \cite{gyorfi}, Definition 9.6.

\begin{proof}[Proof of Corollary~\ref{ConsistencyTruncatedLeastSquaresGeneral_Corollary}]
It remains to consider some technical issues. Clearly, the map
\[
		\R^{K_n} \times \Omega \ni (a,\omega)\mapsto \sum_{i=1}^{K_n} a_i f_i( X(s,\omega) ) \text{ is $\cB( \R^{K_n} ) \otimes \cA$-measurable}.
\]
The desired measurability of the map in \eqref{supMeas2} follows from the fact that for any measurable function $g$ on a product space $(S \times T, \cS\otimes\cT)$ the set
\begin{align*}
		\left\{ t\in T: \sup_{s\in S} g(s,t) > c \right\} &= \left\{ t\in T\,\big|\, \exists s\in S: g(s,t) > c \right\} \\
		&= \pi^{S\times T}_{T} \{ (s,t)\in S\times T: g(s,t) > c \} \in \cT,
\end{align*}
where $\pi^{S\times T}_{T}$ is the projection from $S \times T$ onto $T$.

Furthermore, the Vapnik-Chervonenkis-dimension is at least 2 if $K_n \ge 2$. Indeed, choose functions $f_1$ and $f_2$. Without loss of generality, there is an $\bar{x}$ in $\R^d$ and an $a$ in $\R$ such that $a f_1 (\bar{x}) = f_2( \bar{x}) > 0$. Since $f_1$ and $f_2$ are linearly independent, exactly one of the following three cases occurs: (1) either there are $x_1$ and $x_2$ in a neighbourhood of $\bar{x}$ such that $a f_1(x_1) > f_2(x_1)$ and $f_2(x_2) > a f_1(x_2)$, (2) or $a f_1 = f_2$ on $U$ and $a f_1 > f_2$ on $\R^d \setminus U$, where $U \subset \R^d$ contains $\bar{x}$, (3) or $f_2 = a f_1$ on $U$ and $f_2 > a f_1$ on $\R^d \setminus U$. In the last two cases we can modify $a$ such that we achieve the first case, by linear independence. Thus, the two points $p_i \coloneqq (x_i, t_i)$ (for $i=1,2$) with the property that $a f_1(x_1) > t_1 > f_2(x_1)$ and $f_2(x_2) > t_2 > a f_1(x_2)$ are shattered by the set of all subgraphs of the linear space $\langle f_1, f_2 \rangle$, hence,	$\cV_{ \langle f_1, \ldots, f_n \rangle^+ } \ge \cV_{ \langle f_1, f_2 \rangle^+ } \ge 2$. Consequently, the conditions of Theorem \ref{boundCoveringNumber} are satisfied. We have
\begin{align*}
		\kappa_n (\epsilon, \rho_n) = \log H_{T_{\rho_n}\cF_n} \left( \frac{\epsilon}{4\rho_n} \right) &\le \log \left(	3 \left( \frac{ 16 e\rho_n^2}{\epsilon} \,\log \frac{ 24 e  \rho_n^2}{\epsilon} \right)^{\cV_{ \left(T_{\rho_n} \cF_n \right)^+}} \right) \\
		&\le \log 3 + (K_n+1) \log \left( (24)^2 \left(\frac{e}{\epsilon} \right)^2 \rho_n^4 \right) \\
		&= \cO(K_n \log \rho_n).
\end{align*}
The statement follows now from Theorem~\ref{ConsistencyTruncatedLeastSquaresGeneral}. 
\end{proof}

We need another proposition and a piece of notation to prove the rate of convergence of the regression estimator.
\begin{notation}
Let $f$ be a real-valued function on $\R^d$ and let the distribution of the $X(s)$ be given by $\mu_X$. Let $X'=\{ X'(s): s\in \Z^N \}$ be an i.i.d.\ ghost sample with the same marginals as $X$. Moreover, $X^*$ is constructed for each $n\in\N_+^N$ as in Lemma~\ref{CouplingBetaMixing} and the random field $X^\dagger$ is an independent copy of $X^*$. Define the following empirical $L^2$-norms
\begin{align*}
		&\norm{f}_{|I_n|} \coloneqq \left( |I_n|^{-1} \sum_{s\in I_n} f(X(s))^2 \right)^{1/2}, \quad \norm{f}^{'}_{|I_n|} \coloneqq \left( |I_n|^{-1}\sum_{s\in I_n} f(X^{'}(s) )^2 \right)^{1/2} \\
		&\text{ and } \norm{f}^{\sim}_{|I_n|} \coloneqq \left( (2|I_n|)^{-1}\sum_{s\in I_n} f(X(s))^2+f(X^{'}(s) )^2 \right)^{1/2} \\
		&\text{ as well as } \norm{f}^*_{|I_n|} \coloneqq \left( |I_n|^{-1} \sum_{s\in I_n} f(X^*(s))^2 \right)^{1/2} \\
		&\text{ and } \norm{f}^\dagger_{|I_n|} \coloneqq \left( |I_n|^{-1} \sum_{s\in I_n} f(X^\dagger(s))^2 \right)^{1/2}.
\end{align*}

Consider the random point measure $\nu$ with equal masses which is induced by the sample of the random field and of the ghost sample $(X(I_n),X'(I_n)$, i.e., $\nu = (2|I_n|)^{-1} \sum_{s\in I_n} \big(\delta_{ X(s) } + \delta_{X'(s)} \big)$. We abbreviate the $\epsilon$-covering number of a function class $\cG$ w.r.t.\ 2-norm of $\nu$ by
$$ \mathsf{N}_2\left(\epsilon, \cG, (X(I_n), X^{'} (I_n)) \right) \coloneqq \mathsf{N} \left( \epsilon,\cG, \norm{\,\cdot\,}_{L^2(\nu)}		\right). $$
\end{notation}

The next two statements prepare the second main theorem of Section \ref{Section_NonParaRegGeneral} which is Theorem~\ref{nonParaRegRateOfConvergence}. The first is intended for $\alpha$-mixing data, the second for $\beta$-mixing data.

\begin{proposition}\label{empNormAlphaMixing}
Assume that the random field $X$ satisfies Condition \ref{regCond0} \ref{Cond_AlphaMixing}. Let $\cG$ be a class of $\R$-valued functions on $\R^d$ which are all bounded by a universal constant $B$. Then for all $\epsilon>0$
\begin{align}
\begin{split}\label{empNormAlphaMixingEq0}
	&\p\left( \sup_{ f \in \cG}\; \norm{f} - 2 \norm{f}_{I_n} > \epsilon \right) \\
	&\le A_1 \norm{ \mathsf{N}_2\left( \frac{\sqrt{2} \epsilon}{32 }, \cG, (X(I_n), X^{'} (I_n)) \right) }_{\p,\infty} \\
	&\quad  \cdot \left( \exp\left( - \frac{A_2 \epsilon^4 |I_n|^{1/N }} {B^4 + B^2\,\epsilon^2 \left(\log |I_n| \right)^2 } \right) + \exp\left( - \frac{ A_3 \epsilon^2 |I_n| }{ B^2} \right) \right) 
\end{split}
\end{align}
for constants $0<A_1,A_2,A_3 < \infty$ which neither depend on the bound $B$, nor on $\epsilon$, nor on the index set $I_n$.
\end{proposition}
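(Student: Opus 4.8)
This is a uniform law of large numbers of the same shape as the $\alpha$-mixing exponential inequality collected in the appendix (Theorem~\ref{USLLNM}), and the plan is to apply that inequality to the bounded class $\cG$ after a short deterministic reduction; the two exponential terms and the mesh $\sqrt2\epsilon/32$ in the statement should then come out automatically.

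\emph{Step 1: linearising the norm deviation.} First I would pass from the norm inequality to an empirical-mean inequality. On the event in \eqref{empNormAlphaMixingEq0} there is, for each outcome, a measurable selector $f^\star\in\cG$ (a function of $X(I_n)$) with $\norm{f^\star}>2\norm{f^\star}_{I_n}+\epsilon$; since $\norm{f^\star}_{I_n}\ge0$ this forces $\norm{f^\star}>\epsilon$ and $\norm{f^\star}-\norm{f^\star}_{I_n}>\epsilon$, whence
\[
\norm{f^\star}^2-\norm{f^\star}_{I_n}^2=\bigl(\norm{f^\star}-\norm{f^\star}_{I_n}\bigr)\bigl(\norm{f^\star}+\norm{f^\star}_{I_n}\bigr)\ >\ \epsilon\,\norm{f^\star}\ >\ \epsilon^2 .
\]
So the left-hand side of \eqref{empNormAlphaMixingEq0} is at most $\p\bigl(\sup_{f\in\cG}\bigl(\E{f(X(e_N))^2}-|I_n|^{-1}\sum_{s\in I_n}f(X(s))^2\bigr)>\epsilon^2\bigr)$, a one-sided uniform deviation of an empirical mean for the class $\{f^2:f\in\cG\}$, all of whose members are bounded by $B^2$. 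The extra information $\norm{f^\star}>\epsilon$ is what will let the deviation level $\epsilon^2$ re-enter the covering number only to first power, because $|f^2-g^2|\le2B|f-g|$ turns an $L^2(\nu)$ $\eta$-net of $\cG$ into an $L^1(\nu)$ net of $\{f^2\}$ of comparable fineness.

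\emph{Step 2: ghost sample and the secondary term.} Next I would introduce the i.i.d.\ ghost sample $X'$ from the notation to convert the true mean into a data-dependent empirical one. Conditionally on $X(I_n)$, the ghost average $|I_n|^{-1}\sum_s f^\star(X'(s))^2$ is an i.i.d.\ mean with expectation $\norm{f^\star}^2\ge\epsilon^2$ and variance at most $B^2\norm{f^\star}^2/|I_n|$, so a one-sided Bernstein bound yields $\p\bigl(|I_n|^{-1}\sum_s f^\star(X'(s))^2<\tfrac12\norm{f^\star}^2\mid X(I_n)\bigr)\le\exp(-c\,\epsilon^2|I_n|/B^2)$; integrating over the event, this is precisely the origin of the second term $\exp(-A_3\epsilon^2|I_n|/B^2)$. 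On the complementary event the difference of the two empirical squared norms exceeds a fixed multiple of $\epsilon^2$ along a net of $\cG$ of mesh $\sqrt2\epsilon/32$, the $\sqrt2$ coming from the combined mass-$2|I_n|$ measure $\nu$.

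\emph{Step 3: the $\alpha$-mixing blocking — the main obstacle.} What is left, and where the real work lies, is a Bernstein-type bound for a single $\alpha$-mixing average $|I_n|^{-1}\sum_{s\in I_n}f(X(s))^2$ followed by a union bound over the net; a direct Hoeffding/Bernstein argument is unavailable because of the dependence. I would tile the cube $I_n$ (side $\asymp|I_n|^{1/N}$) by small cubes of side $p$ separated by gaps of side $q\asymp\log|I_n|$, couple the small cubes across the gaps using $\alpha(q)\le c_0e^{-c_1q}$ by a coupling argument in the spirit of \cite{carbon1997kernel}, and apply Bernstein to the resulting nearly independent block sums. The gap $q\asymp\log|I_n|$ makes the coupling error negligible and is responsible for the $(\log|I_n|)^2$ in the denominator, while the effective independent count of this $N$-dimensional tiling is of order $|I_n|^{1/N}$; with range $B^2$ and deviation $\epsilon^2$ this produces the exponent $A_2\epsilon^4|I_n|^{1/N}/(B^4+B^2\epsilon^2(\log|I_n|)^2)$, the quartic $\epsilon^4$ being inherited from Step~1. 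Assembling the block-Bernstein bound, the union bound over the $\sqrt2\epsilon/32$-net, and the ghost-sample term of Step~2 gives the claim; the constants $A_1,A_2,A_3$ are absorbed from the coupling, Bernstein, and the block count, are independent of $B$, $\epsilon$ and $I_n$, and the finitely many small values of $|I_n|$ are handled by enlarging $A_1$ so that the right-hand side exceeds $1$.
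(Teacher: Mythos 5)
Your overall architecture matches the paper's: symmetrize with the i.i.d.\ ghost sample $X'$, reduce to a finite $\sqrt2\epsilon/32$-net in the empirical $L^2$-norm of the combined sample, handle the ghost contribution with a Hoeffding/Bernstein bound (your Step~2 correctly produces the term $\exp(-A_3\epsilon^2|I_n|/B^2)$), and handle the dependent sample with a Bernstein inequality for $\alpha$-mixing fields. The gap is in the order of operations in Steps~1--2: you square first and then cover. Passing from $\norm{f^\star}>2\norm{f^\star}_{I_n}+\epsilon$ to $\norm{f^\star}^2-\norm{f^\star}_{I_n}^2>\epsilon^2$ is fine, but the subsequent transfer to a net element $g_k$ with $\norm{f^\star-g_k}^{\sim}_{I_n}<\sqrt2\epsilon/32$ perturbs the \emph{squared} empirical norms by $\big|\norm{f^\star}_{I_n}^2-\norm{g_k}_{I_n}^2\big|\le\norm{f^\star-g_k}_{I_n}\big(\norm{f^\star}_{I_n}+\norm{g_k}_{I_n}\big)=\cO(B\epsilon)$; equivalently, $|f^2-g^2|\le 2B|f-g|$ turns an $\epsilon$-scale $L^2$-net of $\cG$ into an $L^1$-net of $\{f^2:f\in\cG\}$ of mesh $\cO(B\epsilon)$, not $\cO(\epsilon^2)$. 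For $\epsilon\ll B$ this swamps the deviation level $\epsilon^2/4$ you need to preserve, and the extra information $\norm{f^\star}>\epsilon$ does not enter that computation and does not repair it. The paper avoids this by doing the net transfer on the \emph{unsquared} empirical norms, where the triangle inequality gives $\norm{f}'_{I_n}-\norm{f}_{I_n}\le 2\sqrt2\cdot\tfrac{\epsilon}{16\sqrt2}+\big(\norm{g_k}'_{I_n}-\norm{g_k}_{I_n}\big)$, i.e.\ a perturbation of order $\epsilon$ matching the deviation level $\epsilon/8$, and only then squares the single fixed net function via ``$a-b>c$ with $a,b,c\ge0$ implies $a^2-b^2>c^2$''. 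Reordering your argument accordingly (cover first at the level of norms, square last) recovers the stated mesh and both exponents.

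On Step~3: the route is legitimately different from the paper's. The paper does not tile $I_n$ into coupled blocks; Proposition~\ref{applBernstein} collapses the field to a one-dimensional time series by summing over the hyperplanes orthogonal to the shortest coordinate direction and invokes the Merlev\`ede--Peligrad--Rio inequality, and that reduction --- not an $N$-dimensional tiling --- is where the factor $|I_n|^{1/N}$ originates. Your tiling with gaps $q\asymp\log|I_n|$ would yield on the order of $|I_n|/(\log|I_n|)^N$ blocks rather than $|I_n|^{1/N}$, so your stated heuristic for the exponent does not match your own construction (carried out, it would give a stronger bound, but you would then need to justify replacing $\alpha$-mixing blocks by independent ones, which Berbee's lemma does not provide; the paper reserves coupling for the $\beta$-mixing case and under $\alpha$-mixing relies on the time-series reduction instead).
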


Provided that the Vapnik-Chervonenkis dimension $\cV_{\cG^+}$ is at least 2 and that $\epsilon$ is sufficiently small, the bound from Proposition \ref{empNormAlphaMixing} is non-trivial: we have with Proposition~\ref{boundCoveringNumber}
\begin{align*}
		&\log \norm{ \mathsf{N}_2\left( \frac{\epsilon}{16 \sqrt{2}}, \cG, (X(I_n), X^{'} (I_n)) \right) }_{\p,\infty} \\
		&\le \log 3 + \cV_{\cG^+} \log \left( \frac{ 16^3 e B^2}{\epsilon^2} \cdot \log \frac{ 24 \cdot 16^2 e B^2}{\epsilon^2} \right).
\end{align*}

\begin{proof}[Proof of Proposition~\ref{empNormAlphaMixing}]
Let $\{X(s): s\in I_n \}$ be a subset of the strongly mixing and stationary random field $X$ and let $\{X^{'}(s): s\in I_n\}$ be the corresponding ghost sample. We use the relation
\begin{align*}
		&\p\left( \sup_{ f \in \cG}\; \norm{f} - 2 \norm{f}_{I_n} > \epsilon \right) \\
		&\le \p\left( \sup_{ f \in \cG}\; \norm{f} - 2 \norm{f}'_{I_n} > \frac{\epsilon}{2} \right) + \p\left(\sup_{ f \in \cG}\; \norm{f}'_{I_n} -  \norm{f}_{I_n} > \frac{\epsilon}{4} \right).
\end{align*}

We only consider the second probability on the right-hand-side of the last inequality, bounds on the first probability are given by the second term in the second line of \eqref{empNormAlphaMixingEq0} and are derived in Theorem 11.2 of \cite{gyorfi}. Let $U_1,\ldots,U_{H^*}$ be an $\epsilon/(16 \sqrt{2})$-covering of $\cG$ with respect to the empirical $L^2$-norm of the sample $ \big (X(I_n), X^{'}(I_n) \big)$ with the definition $H^* \coloneqq  \mathsf{N}_2 \big( \epsilon /(16 \sqrt{2}), \cG, \big(X(I_n), X^{'}(I_n) \big) \big)$ and $U_k \coloneqq \{f\in \cG: \norm{f - g_k}^{\sim}_{I_n} < \epsilon/(16 \sqrt{2} ) \}$, where the covering functions are $g_1,\ldots g_{H^*}$. Note that $H^*$ and the $U_k$ are random and that both $\norm{\,\cdot\,}_{I_n}$ and $\norm{\,\cdot\,}'_{I_n}$ are bounded by $\sqrt{2} \norm{\,\cdot\,}^{\sim}_{I_n}$. Then,
\begin{align}
	\p\left(\exists f\in \cG: \norm{f}_{I_n}^{'} - \norm{f}_{I_n} > \frac{\epsilon}{4}		\right) &\le \sum_{k=1}^{||H^*||_{\p,\infty}} \p\left( \exists f\in U_k: \norm{f}_{I_n}^{'} - \norm{f}_{I_n} > \frac{\epsilon}{4}	 \right). \label{empNorm2}
\end{align}
Now, we use that $\norm{f}_{I_n} \le \sqrt{2} \norm{f}_{I_n}^{\sim}$ to obtain for $f \in U_k$ the inequality
\begin{align*}
		\norm{f}_{I_n}^{'} - \norm{f}_{I_n} &= \norm{f}_{I_n}^{'} - \norm{g_k}_{I_n}^{'} + \norm{g_k}_{I_n}^{'} - \norm{g_k}_{I_n} + \norm{g_k}_{I_n} - \norm{f}_{I_n}  \\
		&\le \norm{f - g_k}_{I_n}^{'} + \left( \norm{g_k}_{I_n}^{'} - \norm{g_k}_{I_n}\right) + \norm{f - g_k}_{I_n} \\
		&\le 2 \sqrt{2}\, \frac{\epsilon}{16 \sqrt{2}} + \left( \norm{g_k}_{I_n}^{'} - \norm{g_k}_{I_n}\right). 
\end{align*}
Hence, $\big\{ \exists f \in U_k:  \norm{f}_{I_n}^{'} - \norm{f}_{I_n} > \frac{\epsilon}{4} \big\}$ is a subset of $\big\{\norm{g_k}_{I_n}^{'} - \norm{g}_{I_n} > \frac{\epsilon}{8} \big\}$. Since the inequality $a - b > c$ implies $a^2 - b^2 > c^2$ for $a,b,c \ge 0$, we get for the probabilities on the right-hand-side of \eqref{empNorm2} the following bounds
\begin{align} 
  &\p\left(	\norm{g_k}_{I_n}^{'} - \norm{g_k}_{I_n} > \frac{\epsilon}{8} \right) \le \p\left(	\left(\norm{g_k}_{I_n}^{'}\right)	^2 - \left(\norm{g_k}_{I_n} \right)^2	> \frac{\epsilon^2}{64} \right) \nonumber \\
	&\le \p\Biggl(	\frac{1}{|I_n|} \sum_{s\in I_n} \left\{ g_k( X^{'}(s) )^2 - \E{ g_k( X^{'}(e_N))^2} \right\} \nonumber \\
	&\quad- \frac{1}{|I_n|} \sum_{s\in I_n} \left\{ g_k( X(s) )^2 - \E{ g_k( X(e_N))^2} \right\} > \frac{\epsilon^2}{64} \Biggl) \nonumber \\
	\begin{split} 
	&\le \p\left( \left|	\frac{1}{|I_n|} \sum_{s\in I_n} g_k( X^{'}(s) )^2 - \E{ g_k( X^{'}(e_N) )^2} \right| > \frac{\epsilon^2}{128} \right)  \\
	&\quad + \p\left( \left|	\frac{1}{|I_n|} \sum_{s\in I_n} g_k( X(s) )^2 - \E{ g_k( X(e_N) )^2} \right| > \frac{\epsilon^2}{128} \right). \label{empNorm3}
\end{split} \end{align}
The first term from \eqref{empNorm3} can be bounded by Hoeffding's inequality, we have
\begin{align} \label{empNorm4}
		\p\left( \left|	\frac{1}{|I_n|} \sum_{s\in I_n} g_k( X^{'} (s) )^2 - \E{ g_k( X^{'}(e_N) )^2} \right| > \frac{\epsilon^2}{128} \right) \le 2 \exp\left( - C \epsilon^4 \frac{ |I_n| }{B^4}		\right).
\end{align}
We apply Proposition \ref{applBernstein} to the second term and obtain that
\begin{align} 
		&\p\left( \left|	\frac{1}{|I_n|} \sum_{s\in I_n} g_k( X(s) )^2 - \E{ g_k( X(s))^2} \right| > \frac{\epsilon^2}{128} \right)  \nonumber\\
		&\le \exp\left( - \frac{C \epsilon^4 |I_n|^{1/N }} {B^4 + B^2\,\epsilon^2 \left(\log |I_n| \right)^2 } \right). \label{empNorm5}
\end{align}
Obviously, the bound in \eqref{empNorm5} dominates the bound in \eqref{empNorm4}. This finishes the proof.
\end{proof}

The next proposition is a generalization of Theorem 11.2 of \cite{gyorfi} for $\beta$-mixing data.

\begin{proposition}\label{empNormBetaMixing}
Assume that the random field $X$ satisfies Condition \ref{regCond0} \ref{Cond_BetaMixing}. Let $\cG$ be a class of $\R$-valued functions on $\R^d$ which are all bounded by $B\in\R_+$. Let $n$ be sufficiently large such that both $8/c_1 \log |I_n| < \min\{n_i:i=1,\ldots,N\}$ and $C^* \le 2^{3N-1} (c_1)^{-N} ( \log |I_n|)^N$ where the constant $C^*$ is defined in \eqref{empNormBetaMixingEq2aa}. Then for all $\epsilon>0$
\begin{align}
\begin{split}\label{empNormBetaMixingEq0}
	\p\left( \sup_{ f \in \cG}\; \norm{f} - 2 \norm{f}^*_{I_n} \ge \epsilon \right) &\le   3 \cdot 2^{N}  \norm{ \mathsf{N}_2\left( \frac{\sqrt{2} \epsilon}{32}, \cG, (X(I_n), X^{'} (I_n)) \right) }_{\p,\infty}	\\
	&\quad \cdot\exp \left(- \frac{\epsilon^2 |I_n| }{2^{5N+5} B^2 c_1^{-N} (\log |I_n| )^N } \right)
\end{split}
\end{align}
\end{proposition}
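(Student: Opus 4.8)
The plan is to follow the pattern of Proposition~\ref{empNormAlphaMixing}, but to replace the $\alpha$-mixing Bernstein estimate by a reduction to i.i.d.\ data exploiting the block structure of Lemma~\ref{CouplingBetaMixing}. First I would set $q \coloneqq \ceil{4 c_1^{-1}\log|I_n|}$, so that the first hypothesis $8c_1^{-1}\log|I_n| < \min_i n_i$ gives $2q < \min_i n_i$ and Lemma~\ref{CouplingBetaMixing} applies; as in the proof of Theorem~\ref{ConsistencyTruncatedLeastSquaresGeneral} I would additionally assume $R_i q = n_i$, so that $|I(l,u)| = q^N$, $Rq^N = |I_n|/2^N$, and for each fixed $l$ the collections $(X^*(s): s\in I(l,u))_{u=1,\dots,R}$ are independent. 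By Berbee's coupling each $X^*(s)$ has law $\mu_X$, hence $\E{f(X^*(s))^2} = \norm{f}^2$ for every $s$, where $\norm{f} = \norm{f}_{L^2(\mu_X)}$.

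Next I would reduce to a single colour $l$. Writing $\norm{f}^*_{I_n,l}$ for the empirical $L^2$-norm of $f$ over the $Rq^N$ points $\{X^*(s): s\in I(l,u),\ u\le R\}$, one has $(\norm{f}^*_{I_n})^2 = 2^{-N}\sum_{l=1}^{2^N}(\norm{f}^*_{I_n,l})^2$, so concavity of $t\mapsto\sqrt{t}$ gives $\norm{f}^*_{I_n} \ge 2^{-N}\sum_l \norm{f}^*_{I_n,l}$ and therefore $\norm{f} - 2\norm{f}^*_{I_n} \le 2^{-N}\sum_l(\norm{f} - 2\norm{f}^*_{I_n,l})$. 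Consequently $\{\exists f\in\cG: \norm{f} - 2\norm{f}^*_{I_n}\ge\epsilon\} \subseteq \bigcup_{l=1}^{2^N}\{\exists f\in\cG: \norm{f} - 2\norm{f}^*_{I_n,l}\ge\epsilon\}$, and a union bound reduces the task to bounding, for each fixed $l$, the probability that $\sup_{f\in\cG}(\norm{f} - 2\norm{f}^*_{I_n,l}) \ge \epsilon$.

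For fixed $l$ I would set $V_u \coloneqq (X^*(s): s\in I(l,u))$ and $h_f(v) \coloneqq (q^{-N}\sum_s f(v_s)^2)^{1/2} \in [0,B]$; then $\norm{f}^*_{I_n,l} = (\tfrac1R\sum_u h_f(V_u)^2)^{1/2}$ while $\tfrac1R\sum_u \E{h_f(V_u)^2} = \norm{f}^2$, so $\norm{f} - 2\norm{f}^*_{I_n,l}$ is exactly the ``true minus twice empirical $L^2$-norm'' quantity of Theorem 11.2 of \cite{gyorfi} for the bounded class $\cH \coloneqq \{h_f: f\in\cG\}$ and the independent sample $V_1,\dots,V_R$ (that theorem's proof uses independence but not identical distribution, and the relevant true value is the average of the means, which here equals $\norm{f}$). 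It yields a bound of the form $3\,\mathsf{N}_2(\tfrac{\sqrt{2}\epsilon}{32}, \cH, (V_1,\dots,V_R)\text{ and its ghost copy built from }X^\dagger)\,\exp(-R\epsilon^2/(C^* B^2))$, with $C^*$ the absolute symmetrization constant of that theorem (this is the $C^*$ of the statement). The covering number is transferred back to $\cG$ via the reverse triangle inequality for the per-block $L^2$-norms: if $f,g$ are $\delta$-close in the empirical $L^2$-norm over the $Rq^N$ lattice points, then $h_f,h_g$ are $\delta$-close over $V_1,\dots,V_R$, so $\mathsf{N}_2(\delta,\cH,\cdot)$ is at most the empirical $L^2$-covering number of $\cG$ over those $Rq^N \le 2|I_n|$ points, which — since $X^*$ and $X^\dagger$ have the same marginals as $X$ — is dominated by $\norm{\mathsf{N}_2(\tfrac{\sqrt{2}\epsilon}{32}, \cG, (X(I_n), X'(I_n)))}_{\p,\infty}$.

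Finally I would collect constants. Summing over the $2^N$ colours produces the prefactor $3\cdot 2^N$; substituting $R = |I_n|/(2^N q^N)$ and $q^N \le 2^{3N} c_1^{-N}(\log|I_n|)^N$ (valid once $n$ is large) turns $\exp(-R\epsilon^2/(C^* B^2))$ into at most $\exp(-\epsilon^2|I_n|/(2^{5N+5} B^2 c_1^{-N}(\log|I_n|)^N))$, and the second hypothesis $C^* \le 2^{3N-1} c_1^{-N}(\log|I_n|)^N$ — which holds eventually since its right-hand side diverges — is precisely what legitimises this last estimate together with the applicability of Theorem 11.2's symmetrization to the chosen block size. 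I expect the main obstacle to be the bookkeeping in the third step: keeping the covering scale at $\tfrac{\sqrt{2}\epsilon}{32}$ with no spurious factor of $B$ while passing from $\cG$ to the block-averaged class $\cH$, and rigorously replacing the random empirical covering numbers by the uniform bound $\norm{\mathsf{N}_2(\,\cdot\,,\cG,(X(I_n),X'(I_n)))}_{\p,\infty}$; the exponential concentration itself is immediate once the block-independence is in place.
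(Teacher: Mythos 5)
Your route is sound and reaches the stated bound, but it is organized quite differently from the paper's proof, and one of your claims about the constant $C^*$ is wrong. The paper does \emph{not} invoke Theorem 11.2 of \cite{gyorfi} as a black box on block variables; it re-runs that theorem's proof at the level of the individual sites $X^\dagger(s)$: a symmetrization step whose Chebyshev bound requires controlling $\sum_{s,t\in I(l,u)}\Ec{\tilde f(X^\dagger(s))^2\tilde f(X^\dagger(t))^2}$, which is done via the $\beta$-mixing decay \emph{within} a block together with the Radon--Nikod{\'y}m bound \eqref{BoundRadonNikodymDerivative} --- that computation is the origin of $C^*=\sqrt{2}\sqrt{1+C_\p}(1+C_N\bar\beta_\infty)$, so $C^*$ is a mixing-dependent constant, not ``the absolute symmetrization constant'' of Theorem 11.2; it then does the covering step and finishes with a Rademacher/Hoeffding argument on the blocks, splitting by colour only at that last stage (which costs a factor $\epsilon/2^{N+3}$ in the deviation). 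Your version instead reduces to one colour first via concavity of $\sqrt{\,\cdot\,}$ (correct, and it avoids degrading $\epsilon$), packages each block into $h_f(V_u)$ with $\E{h_f(V_u)^2}=\norm{f}^2$, and absorbs the within-block correlations by the crude self-bounding inequality $h_f(V_u)^4\le B^2 h_f(V_u)^2$, so that Theorem 11.2 for independent, blockwise-identically-distributed data applies directly. A genuine consequence of your organization is that Condition~\ref{regCond0}~\ref{Cond_BetaMixing}'s Radon--Nikod{\'y}m part and the quantity $C^*$ never enter; the hypothesis $C^*\le 2^{3N-1}c_1^{-N}(\log|I_n|)^N$, which the paper uses only to make the bound trivially exceed one when $|I_n|<2^{2N+6}C^*B^2/\epsilon^2$, is replaced in your route by an analogous trivial-case check with absolute constants.

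Two pieces of bookkeeping do need repair. First, with $q=\ceil{4c_1^{-1}\log|I_n|}$ the condition $2q<\min_i n_i$ is not implied by the stated hypothesis $8c_1^{-1}\log|I_n|<\min_i n_i$ because of the ceiling; take $q=\ceil{2c_1^{-1}\log|I_n|}$ as the paper does. Second, the exact constants $2^{5N+5}$ and the covering radius $\sqrt{2}\epsilon/32$ will not fall out of Theorem 11.2 verbatim (its radius is $\sqrt{2}\epsilon/24$ and its exponent constant is $288$, which for small $N$ is larger than the $2^{N+5}$ your budget allows after substituting $R=|I_n|/(2^Nq^N)$); you recover the stated form only after re-tuning the Hoeffding step as the paper does, and after noting that replacing the colour-$l$ empirical covering number by $\norm{\mathsf{N}_2(\sqrt{2}\epsilon/32,\cG,(X(I_n),X'(I_n)))}_{\p,\infty}$ is legitimate only because the covering numbers are in practice bounded uniformly over discrete measures (Proposition~\ref{boundCoveringNumber}) --- a step the paper itself glosses over in the same way.
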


\begin{proof}[Proof of Proposition~\ref{empNormBetaMixing}]
Set $q=\ceil{2/c_1 \log |I_n| }$ and apply Lemma~\ref{CouplingBetaMixing}. We obtain a partition of $I_n$ given by $\{ I(l,u): l=1,\ldots,2^N,u=1,\ldots,R\}$ such that we can write
\begin{align*}
&|I_n|^{-1}\sum_{s\in I_n } f(X^*(s))^2 = R^{-1} \sum_{l=1}^{2^N} \sum_{u=1}^R  Z^*(l,u)^2\quad  \\
&\text{ and } \quad |I_n|^{-1}\sum_{s\in I_n } f(X^\dagger(s))^2 = R^{-1} \sum_{l=1}^{2^N} \sum_{u=1}^R Z^\dagger(l,u)^2,
\end{align*}
where 
\begin{align*}
		&Z^*(l,u)= \left( R |I_n|^{-1} \sum_{s\in I(l,u) } f(X^*(s))^2 \right)^{1/2} \\
		&\text{ and } Z^\dagger(l,u)= \left( R |I_n|^{-1} \sum_{s\in I(l,u) } f(X^\dagger(s))^2 \right)^{1/2}.
\end{align*}
Note that $0\le Z^*(l,u), Z^\dagger(l,u) \le B$.

In the following, let $\tilde{f}$ be a function in $\cG$ such that $\norm{\tilde{f}} - 2\norm{\tilde{f}}_{I_n} \ge \epsilon$ if there is such a function. Otherwise, $\tilde{f}$ is any other function. We write $\p^*$ for the conditional probability measure and $\mathbb{E}^*$ for the conditional expectation given the data $X^*(I_n)$. 

The remaining proof is a modification of Theorem 11.2 in \cite{gyorfi} and is split in three steps. In the first step, we show that 
\begin{align}\label{empNormBetaMixingEq1}
		\p\left( \sup_{ f \in \cG}\; \norm{f} - 2 \norm{f}^*_{I_n} \ge \epsilon \right) \le \frac{3}{2} \p\left( \sup_{ f \in \cG}\; \norm{f}^\dagger_{I_n} - \norm{f}^*_{I_n} \ge \frac{\epsilon}{4} \right), 
\end{align}
if $B^2/\epsilon^2 \le  |I_n| / (2^{2N+6} C^* )$ where
\begin{align}\label{empNormBetaMixingEq2aa}
		C^*\coloneqq \sqrt{2} \sqrt{1+C_\p} (1+C_N \bar\beta_\infty) .
\end{align}
Here $C_\p$ is a uniform bound of the essential suprema of the Radon-Nikod{\'y}m derivatives in \eqref{BoundRadonNikodymDerivative} and the factor $\bar\beta_\infty$ equals $\sum_{k=0}^\infty k^{N-1} \sqrt{\beta(k)} < \infty$; additionally, the constant $C_N$ depends on the lattice dimension $N$ and is given below.

For this result, we need that 
\begin{align}\label{empNormBetaMixingEq2a}
		\p^*\left(  2\norm{\tilde{f}}^\dagger_{I_n} + \frac{\epsilon}{2} \ge \norm{\tilde{f}} \right) \ge 1 - \p^*\left( 3\norm{\tilde{f}}^2 + \frac{\epsilon^2}{4} \le 4\left( \norm{\tilde{f}}^2-\left(\norm{\tilde{f}}^\dagger_{I_n}\right)^2 \right) \right)
\end{align}
Indeed, this follows with some calculations (see the proof of Theorem 11.2 \cite{gyorfi}). Furthermore, we need a result, which follows using the $\beta$-mixing property and a lemma in \cite{krebs2018large}, 
\begin{align}
	& \sum_{s,t\in I(l,u) } \Ec{ \tilde{f}(X^\dagger(s))^2 \tilde{f}(X^\dagger(t))^2 } \nonumber \\
		&\le \sqrt{2} \sqrt{1+C_\p} \Ec{ \tilde{f}(X^\dagger(e_N))^4 } \sum_{s,t\in I(l,u) } \beta(\norm{s-t}_{\infty})^{1/2} \nonumber \\
		&\le \sqrt{2} \sqrt{1+C_\p} (1+C_N \bar\beta_\infty) B^2 \norm{\tilde{f}}^2  q^N, \nonumber
\end{align}
for a certain constant $C_N$ which depends on the lattice dimension $N$.

Moreover, using that for a fixed $l$ the blocked random variables $\{X^\dagger( I(l,u)):u=1,\ldots,R\}$ are independent, the probability on the right-hand-side of \eqref{empNormBetaMixingEq2a} is at most
\begin{align}
		&\frac{16}{|I_n|^2} \frac{ \operatorname{Var}^*\left(  \sum_{l,u} \sum_{s\in I(l,u) } \tilde{f}(X^\dagger(s))^2  \right) }{\left(3\norm{\tilde{f}}^2 + \frac{\epsilon^2}{4} \right)^2} \nonumber \\
		&\le \frac{2^{N+4} }{|I_n|^2} \sum_{l=1}^{2^N} \sum_{u=1}^R \frac{ \Ec{ \left(\sum_{s\in I(l,u) } \tilde{f}(X^\dagger(s))^2 \right)^2 } }{\left(3\norm{\tilde{f}}^2 + \frac{\epsilon^2}{4} \right)^2} \nonumber \\
		&\le \frac{2^{N+4} }{|I_n|^2} \frac{2^N R C^* B^2 \norm{\tilde{f}}^2 q^N }{\left(3\norm{\tilde{f}}^2 + \frac{\epsilon^2}{4} \right)^2}\le \frac{2^{2N+6} C^* B^2}{3|I_n| \epsilon^2}. \label{empNormBetaMixingEq2b}
\end{align}
This last term is at most $1/3$ if $|I_n|  \ge 2^{2N+6} C^* B^2 / \epsilon^2$. In particular, the right-hand-side of \eqref{empNormBetaMixingEq2a} is then at least $2/3$.

Using once more a result of \cite{gyorfi}, we have that 
\begin{align*}
		&\p\left( \sup_{ f \in \cG}\; \norm{f}^\dagger_{|I_n|} - \norm{f}^*_{|I_n|} \ge \frac{\epsilon}{4} \right) \\
		&\ge \E{\1{\norm{\tilde{f}} - 2\norm{\tilde{f}}^{*}_{|I_n|} \ge \epsilon } \p^*\left(  2\norm{\tilde{f}}^\dagger_{|I_n|} + \frac{\epsilon}{2} \ge \norm{\tilde{f}} \right)} 
\end{align*}
Consequently, \eqref{empNormBetaMixingEq1} follows from this last inequality if $|I_n| \ge 2^{2N+6} C^* B^2 / \epsilon^2$.

In the second step, consider an $\epsilon/(16 \sqrt{2})$-covering of $\cG$ with respect to the empirical $L^2$-norm of the sample $ \big (X^*(I_n), X^\dagger(I_n) \big)$. It follows as in the proof of Proposition~\ref{empNormAlphaMixing} that
\begin{align}
	\p\left(\sup_{ f \in \cG}\; \norm{f}_{I_n}^{\dagger} - \norm{f}^*_{I_n} \ge \frac{\epsilon}{4}		\right) &\le \sum_{k=1}^{||H^*||_{\p,\infty} } \p\left(	\norm{g_k}_{I_n}^{\dagger} - \norm{g_k}^*_{I_n} \ge \frac{\epsilon}{8} \right), \label{empNormBetaMixingEq2}
	\end{align}
	where $||H^*||_{\p,\infty} \le \norm{ \mathsf{N}_2\left( \frac{\sqrt{2} \epsilon}{32}, \cG, (X(I_n), X^{'} (I_n)) \right) }_{\p,\infty}	$.
	
Consequently, it remains to bound the last probability in \eqref{empNormBetaMixingEq2}. This is done in the third step. Consider a function $f$ such that $|f|\le B$, then
\begin{align}
	&\p\left(\norm{f}^\dagger_{I_n} - \norm{f}^*_{I_n} \ge  \frac{\epsilon}{8} \right) \nonumber \\
	 &= \p\left( \Big( R^{-1} \sum_{l,u}  Z^\dagger(l,u) ^2 \Big)^{1/2} - \Big(R^{-1} \sum_{l,u} Z^*(l,u)^2 \Big)^{1/2} \ge \frac{\epsilon}{8} \right) \nonumber \\
&=\p\left( \frac{ R^{-1} \sum_{l,u} Z^\dagger(l,u)^2 - Z^*(l,u)^2 } {\Big( R^{-1} \sum_{l,u} Z^\dagger(l,u)^2 \Big)^{1/2} + \Big( R^{-1} \sum_{l,u} Z^*(l,u)^2 \Big)^{1/2}} \ge  \frac{\epsilon}{8} \right) \nonumber \\
&\le \sum_{l=1}^{2^N} \p\left( \frac{ R^{-1} \sum_{u=1}^R Z^\dagger(l,u)^2 - Z^*(l,u)^2 } {\Big( R^{-1} \sum_{u=1}^R Z^\dagger(l,u)^2 \Big)^{1/2} + \Big( R^{-1} \sum_{u=1}^R Z^*(l,u)^2 \Big)^{1/2}} \ge \frac{\epsilon}{2^{N+3} } \right) \nonumber \\
&=  \sum_{l=1}^{2^N} \p\left( \Big( R^{-1} \sum_{u=1}^R  Z^\dagger(l,u)^2 \Big)^{1/2} - \Big( R^{-1} \sum_{u=1}^R  Z^*(l,u)^2 \Big)^{1/2} \ge \frac{\epsilon}{2^{N+3} }  \right). \label{empNormBetaMixingEq3}
\end{align}
Next, we use a trick which induces additional randomness and which can be applied to the last probabilities. W.l.o.g.\, we consider the case $l=1$. Then, choose i.i.d.\ random variables $V(1),\ldots,V(R)$ which are uniformly distributed on $\{-1,1\}$ and define
\begin{align*}
U^\dagger(u) \coloneqq 
\begin{cases}
Z^\dagger(1,u) & \text{ if } V(u) = 1 \\
Z^*(1,u) & \text{ if } V(u) = -1
\end{cases}
\quad \text{ and }\quad 
U^*(u) \coloneqq 
\begin{cases}
Z^*(1,u)& \text{ if } V(u) = 1 \\
Z^\dagger(1,u)& \text{ if } V(u) = -1.
\end{cases}
\end{align*}
As the $Z^*(1,u)$ and $Z^\dagger(1,u)$ are independent and have for each $u$ identical distributions, we can replace their distribution with the distribution of the $U^*(u)$ and $U^\dagger(u)$. Now, write $\p^*$ for the probability measure conditioned on $\sigma(Z^\dagger(1,u), Z^*(1,u), u=1,\ldots R)$. Then if $l=1$, the probability in \eqref{empNormBetaMixingEq3} equals
\begin{align*}
&\p\left( \Big( R^{-1} \sum_{u=1}^R  U^\dagger(u)^2 \Big)^{1/2} - \Big( R^{-1} \sum_{u=1}^R  U^*(u)^2 \Big)^{1/2} \ge \frac{\epsilon}{2^{N+3} }  \right) \\
&= \mathbb{E}\Bigg[ \p^*\Bigg( R^{-1} \sum_{u=1}^R V(u) ( Z^\dagger(1,u)^2 - Z^*(1,u)^2 ) \\
&\qquad\qquad \ge \frac{\epsilon}{2^{N+3} }  \left\{ \Big( R^{-1} \sum_{u=1}^R  Z^\dagger(1,u)^2 \Big)^{1/2} + \Big( R^{-1} \sum_{u=1}^R  Z^*(1,u)^2 \Big)^{1/2} \right\} \Bigg]. 
\end{align*}
Due to the independence between the $V(u)$ and the $(Z^\dagger(1,u), Z^*(1,u))$, we can bound the inner conditional probability with Hoeffding's inequality and obtain the bound
\begin{align}
&2 \exp\left(- \frac{R \epsilon^2}{2^{2N+5}} \frac{\sum_{u=1}^R  Z^\dagger(1,u)^2 + Z^*(1,u)^2}{\sum_{u=1}^R | Z^\dagger(1,u)^2  - Z^*(1,u)^2|^2 } \right) \nonumber \\
 &\le 2 \exp\left(- \frac{\epsilon^2 |I_n| }{2^{3N+5} B^2 q^N } \right) \nonumber \\ 
&\le 2 \exp \left(- \frac{\epsilon^2 |I_n| }{2^{5N+5} B^2 c_1^{-N} (\log |I_n| )^N } \right). \label{empNormBetaMixingEq4}
\end{align}
We use for the last inequality the three relations $R\ge |I_n|/(2^N q^N )$ and $q^N\le  2^{2N}/c_1^N (\log |I_n| )^N$ as well as
$$
| Z^\dagger(1,u)^2  - Z^*(1,u)^2|^2 \le Z^\dagger(1,u)^4 + Z^*(1,u)^4 \le B^2  ( Z^\dagger(1,u)^2 + Z^*(1,u)^2).
$$
Combining \eqref{empNormBetaMixingEq1} to \eqref{empNormBetaMixingEq4} yields the result given in \eqref{empNormBetaMixingEq0} if $|I_n| \ge 2^{2N+6} C^* B^2 / \epsilon^2$. Otherwise in the case that $|I_n| < 2^{2N+6} C^* B^2 / \epsilon^2$, the exponential in \eqref{empNormBetaMixingEq4} is at least $e^{-1}$ if $C^* \le 2^{3N-1} (c_1)^{-N} ( \log |I_n|)^N$, hence, the right-hand-side of \eqref{empNormBetaMixingEq0} is greater than one; so the inequality is also true in this case.
\end{proof}

\begin{proof}[Proof of Theorem~\ref{nonParaRegRateOfConvergence}]
We begin with the case of $\alpha$-mixing data and use the decomposition
\begin{align}
		&\int_{\R^d} | \hat{m}_n - m |^2 \intd{\mu_X}		\nonumber \\
		&= \norm{ \hat{m}_n - m }^2  = \left(\norm{ \hat{m}_n - m } - 2\norm{ \hat{m}_n - m}_{I_{n} } + 2\norm{ \hat{m}_n - m}_{I_{n} } \right)^2 \nonumber \\
		&\le  2\,\max \left( \norm{ \hat{m}_n - m } - 2\norm{ \hat{m}_n - m}_{I_{n} } , 0 \right) ^2 + 8 \left( \norm{ \hat{m}_n - m}_{I_{n} } \right)^2 \label{convRate1}
\end{align}
The exponentially decreasing mixing rates ensure that the norm of the conditional covariance matrix remains bounded and that we can use Theorem 11.1 of \cite{gyorfi} even in the case where the error terms $\epsilon(s)$ are not uncorrelated. There is a constant $C_1$ such that $\norm{ \operatorname{Cov}( Y(I_{n} ) \,|\, X(I_{n } ) ) }_2 \le C_1$ for all $k \in\N$. Indeed, consider the operator norms for matrices which are defined for a matrix $A \in \R^{u_1 \times u_2}$ and $p\in[1,\infty]$ by the corresponding $p$-norm on $\R^{u_1}$ (resp.  on $\R^{u_2}$) as $\norm{A}_{p} = \max_{x\in\R^{u_2}: \norm{x}_p =1} \norm{Ax}_p$. We have the norm inequality $\norm{A}_2 \le \sqrt{ \norm{A}_1 \, \norm{A}_{\infty} }$. As the covariance matrix is symmetric, the $\infty$- and the 1-norm are equal. We consider a line (resp. a column) of the covariance matrix that contains the conditional covariances of the $Y(s)$. By assumption, the error terms satisfy $\E{ |\epsilon(s)|^{2+\gamma}}<\infty$ for some $\gamma > 0$. We use Davydov's inequality from Appendix~\ref{davydovsIneq} and the bound on the mixing coefficients, $\alpha(k) \le c_0 \exp(- c_1 k)$ for certain $c_0,c_1 \in \R_+$. We obtain
\begin{align*}
		&\sum_{t\in I_{n} } |\operatorname{Cov}( Y(s), Y(t) \,|\,X(I_{n }) )| \\
		&\le \norm{\varsigma}_{\infty}^2 \sum_{t \in I_{n }}  |\operatorname{Cov}( \epsilon(s), \epsilon(t) ) | \\
		&\le 10 \norm{\varsigma}_{\infty}^2  \E{ |\epsilon(s)|^{2+\gamma}}^{2/(2+\gamma)} \sum_{t \in I_{n } } \alpha( \norm{s-t}_{\infty} )^{\gamma/(2+\gamma)} \\
		&\le 10 \norm{\varsigma}_{\infty}^2 c_0 \E{ |\epsilon(s)|^{2+\gamma}}^{2/(2+\gamma)}  \\
		&\quad \times \sum_{u=0}^{\max_{ 1\le i \le N} n_i } \left( (2u+1)^N - (2u-1)^N \right) \, \exp\left( - c_1\, \frac{\gamma}{2+\gamma} u \right) \\
		&\le C_1,
\end{align*}
for a universal constant $C_1<\infty$ and for all $s\in I_{n }$. Hence, 
$	\norm{ \operatorname{Cov}( Y(I_{n }) \,|\, X(I_{n }) ) }_2 \le C_1 $.	Thus, we find with Theorem 11.1 of \cite{gyorfi}, which is applicable to dependent data too, that
\begin{align}
\E{ \norm{\hat{m}_n - m}_{I_{n } }^2 } \le \ C_1 \frac{ K_n }{ |I_n| } + \inf_{f \in \cF_n} \int_{\R^d} \left(	f- m	\right)^2   \intd{\mu}_X. \label{convRate2}
\end{align}
Next, consider the expectation of the first term in \eqref{convRate1}, it admits the upper bound
\begin{align}
		&\E{ \left\{\max \left( \norm{ \hat{m}_n - m } - 2\norm{ \hat{m}_n - m}_{I_n} , 0 \right) \,\right\}^2 } \nonumber \\
		&\le v + \int_{v}^{\infty} \p\left( \left\{\max \left( \norm{ \hat{m}_n - m } - 2\norm{ \hat{m}_n - m}_{I_n} , 0 \right) \,\right\}^2 > u \right) \intd{u} \nonumber \\
		&\le v + \int_{v}^{\infty} \p\left(	\exists f \in T_L \cF_n: \norm{f-m} - 2\norm{f-m}_{I_{n } } > \sqrt{ u } \right) \intd{u}, \label{convRate3}
\end{align}
for each $v>0$ and if $|I_n|$ is large enough.

We apply Proposition~\ref{empNormAlphaMixing}; note that the second exponential term in \eqref{empNormAlphaMixingEq0} is negligible, so we only consider the first term here. We find with Proposition \ref{boundCoveringNumber} that the covering number is in $\cO\left( \left( L^2/v \right)^{2(K_n+1)  }\right)$ provided that $v < 16^2 L^2$; w.l.o.g.\ this is the case. Hence, \eqref{convRate3} can be bounded by
\begin{align}\label{convRate4}
		&v + A_1 \left( \frac{L^2}{v} \right)^{2 (K_n + 1)} \int_{v}^{\infty} \exp\left( - \frac{A_2 u^2 |I_n|^{ 1/N }} {L^4 + L^2\,u \left(\log |I_n|\right)^2 } \right) \intd{u}.
\end{align}
Define $v \coloneqq K_n \log(|I_n|) / |I_n|^{1/(2N)}$ which converges to zero by assumption. One finds that \eqref{convRate4} is in $\cO(v)$. Combining this result with \eqref{convRate1} and \eqref{convRate2} implies the assertion for $\alpha$-mixing data.

Next, we consider the case of $\beta$-mixing data. For that reason we need the coupled process $(X^*,Y^*)$ obtained from Lemma~\ref{CouplingBetaMixing} for $q=\ceil{2/c_1 |I_n| }$. We also compute the truncated least-squares estimate for the coupled regression problem and denote it by $\hat{m}_n^*$. Then the following upper bound of \eqref{convRate1} is true in terms of the estimate $\hat{m}_n^*$
\begin{align}
\begin{split}
		&4 \left\{\max \left( \norm{ \hat{m}^*_n - m } - 2\norm{ \hat{m}^*_n - m}_{I_n} , 0 \right) \,\right\}^2   \\
		&\qquad\qquad + 16 \norm{\hat{m}_n - m}^2_{I_n} + 16 \norm{\hat{m}_n - \hat{m}^*_n }^2_{I_n} +  2\norm{\hat{m}_n - \hat{m}^*_n }^2 . \label{convRate5}
\end{split}\end{align}
Consider the expectation of the last two terms: we have
$$
	\E{ \norm{\hat{m}_n - \hat{m}^*_n }^2_{I_n} }  + \E{ \norm{\hat{m}_n - \hat{m}^*_n }^2 } \le 2^{N+3} L^2 R \beta(q) = o(|I_n|^{-1}),
	$$
	this follows from the coupling property. Consequently, these two terms are negligible. A bound on the expectation of the second term in \eqref{convRate5} has already been established in \eqref{convRate2}. We can bound the expectation of the first term in \eqref{convRate5} similar as in the case of $\alpha$-mixing data in \eqref{convRate3} but this time using Proposition~\ref{empNormBetaMixing}. Thus, instead of \eqref{convRate4} and if $|I_n|$ is large enough, we obtain for the expectation of this term the bound 
\begin{align}\begin{split}\label{convRate6}
			&v + A_1 \left( \frac{L^2}{v} \right)^{2 (K_n + 1)} \int_{v}^{\infty} \exp \left(- \frac{ u \, |I_n| }{2^{5N+5} L^2 c_1^{-N} (\log |I_n| )^N } \right) \intd{u},
\end{split}\end{align}
where $v$ is positive and again $A_1$ is a positive constant. One finds that for the choice $v = K_n (\log |I_n| )^{N+2} / |I_n|$ both terms in \eqref{convRate6} are in $\cO(v)$. This proofs the result in the case of $\beta$-mixing data.
\end{proof}

We come to the proofs of the theorems in Section~\ref{Section_NonParaRegWavelets}. 

\begin{proof}[Proof of Theorem~\ref{WaveletsDenseLp}]
If $\bigcup_{j\in\Z} U_j$ is not dense in $L^{p}(\mu)$, there is a $0 \neq g \in L^q(\mu)$ which satisfies $\int_{\R^d} fg \, \intd{\mu} = 0$ for all $f\in \overline{\bigcup_{j\in\Z} U_j}$ where $q$ is H{\"o}lder conjugate to $p$. We show that the Fourier transform of $g$ is zero which contradicts the assumption that $g \neq 0$. This proves in particular that $\bigcup_{j\in \Z} U_j$ is dense. Consider the Fourier transform of this element $g$ which we define here for reasons of simplicity as
\begin{align*}
		\cF g \colon \R^d \rightarrow \C,\, \xi \mapsto \int_{\R^d} g(x)\, e^{ i \langle x, \xi \rangle}\, \mu(\mathrm{d}x),
\end{align*}
where $\scalar{\cdot}{\cdot}$ is the Euclidean inner product on $\R^d$.

Since the scaling function $\Phi$ is of the form $\Phi = \otimes_{i=1}^d \phi$ and $\phi$ is a compactly supported one-dimensional scaling function, we can assume that the support of $\Phi$ is contained in the cube $[0,A]^d$ for some $A \in \N_+$. Choose $1 > \epsilon > 0$ arbitrary, there is an $n \in \N$ such that we have for $Q \coloneqq [-A n,\, A n]^d$
\[
	\mu( \R^d \setminus Q )^{1/p} < \epsilon \big/ \big( 3\cdot 2^{d-1} \max( \norm{g}_{L^q(\mu)}, 1) ).
\]
Consider $\xi \in \R^d$ arbitrary, then we get with the assumed properties of $g$ that
\begin{align}\begin{split}
	| \cF g (\xi) | &\le \left|\int_{\R^d} ( \cos \langle x, \xi \rangle - F_1(x) ) g(x)\, \mu( \intd{x}) \right|\\
	& \quad +  \left|\int_{\R^d} ( \sin \langle x, \xi \rangle - F_2(x) ) g(x)\, \mu( \intd{x}) \right| \label{FourierTransf}
\end{split}\end{align}
for all $F_1, F_2 \in \overline{ \bigcup_{j\in\Z} U_j }$. We show that the first term in \eqref{FourierTransf} is smaller than $\epsilon$ for suitable $F \in \overline{ \bigcup_{j\in\Z} U_j }$; the second term can be treated in the same way. Therefore, we use several times the trigonometric identities	$\sin  = - \cos\left(\darg + \frac{\pi}{2} \right)$, as well as, $\cos( \alpha + \beta) = \cos \alpha \cos \beta - \sin \alpha \sin \beta$: we can split $\cos \langle \darg, \xi \rangle$ in $2^{d-1}$ terms as	$\cos \langle x, \xi \rangle = \sum_{i=1}^{2^{d-1}} b_i \cos( \xi_1 x_1 + a_{i,1}) \cdot \ldots \cdot \cos ( \xi_d x_d + a_{i,d})$, where the $b_i$ are in $\{ -1, 1\}$. Firstly, we prove that each of the functions $\cos(\xi_k\darg + a_{i,k})$ can be uniformly approximated on finite intervals. Indeed, define the kernel
\[
		K\colon \R^2 \rightarrow \R,\, (x,y) \mapsto \sum_{k \in \Z} \phi(x-k)\,\phi(y-k)
\]
and the associated linear wavelet projection operator $K_j$ for $j\in \Z$ by
\[
		K_j\colon  L^2(\lambda) \rightarrow \overline{U_j},\quad f\mapsto \sum_{k\in\Z}  \left \langle f,\; 2^{j/2} \phi(2^j \darg - k) \right\rangle 2^{j/2}\phi (2^j\darg - k).
\]
Then, $K$ satisfies the moment condition $M(N)$ from \cite{hardle2012wavelets} for $N=0$: since $\phi$ is a scaling function, we have	$\int_{\R} K(\darg,y) \intd{y} = \sum_{k\in\Z} \phi(\darg - k) \equiv 1$. Furthermore,
\[
	|K(x,y)| = \left| \sum_{k\in \Z} \phi(x-k)\, \phi(y-k) \right| \le (A+1) \norm{\phi}_{\infty}^2 1_{ \{|x-y| \le A \} } =: F( x-y),
\]
where we assume w.l.o.g.\ that $\overline{\text{supp}\, \phi} \subseteq [0,A]$. Thus, $F$ is integrable w.r.t.\ the Lebesgue measure $\lambda$ and $K$ satisfies the moment condition $M(0)$. Next, let $I(i,k) \supseteq [-An, An]$ be a finite interval such that $\cos( \xi_k \darg + a_{i,k})$ is zero at the boundary of $I(i,k)$. Then by Theorem 8.1 and Remark 8.4 in \cite{hardle2012wavelets} the uniformly continuous restriction $\cos( \xi_k \darg + a_{i,k})\; 1_{I(i,k) }$ can be approximated in $L^{\infty}(\lambda)$ with elements from some $U_j$, i.e.,
\[
		\norm{ \cos( \xi_k \darg + a_{i,k})\, 1_{I(i,k)} - K_j\;\cos( \xi_k \darg + a_{i,k})\, 1_{I(i,k) } }_{L^{\infty}(\lambda)} \rightarrow 0.
\]  
Thus, if $\tilde{\epsilon} > 0$ is arbitrary but fixed, we can choose for each factor $\cos( \xi_k \darg + a_{i,k})\, 1_{ I(i,k) }$ an approximation $f_{i,k}$ in some $U_j$ such that $\norm{ \cos (\xi_k \darg  + a_{i,k} ) 1_{I(i,k)} - f_{i,k} }_{L^{\infty}(\lambda )} \le \tilde{\epsilon}$. This implies that for each of the $i=1,\ldots,2^{d-1}$ products we have
\begin{align}
	&\norm{ \cos( \xi_1 x_1 + a_{i,1}) 1_{ I(i,1)} \cdot \ldots \cdot \cos ( \xi_d x_d + a_{i,d}) 1_{I(i,d)} - f_{i,1}  \otimes \ldots  \otimes f_{i,d} }_{L^{\infty}(\lambda )}  \nonumber \\
	& \qquad \qquad \qquad \qquad  \qquad \qquad  \le (1+\tilde{\epsilon})^d -1\le d \tilde{\epsilon} e^{d\tilde{\epsilon}} \le \left(d e^d\right)\, \tilde{\epsilon}. \label{norm000}
\end{align}
This means that the $d$-dimensional approximation follows from the one-dimensional approximations.

Set now $F_1 \coloneqq \sum_{i=1}^{2^{d-1}} b_i f_{i,1}  \otimes \ldots  \otimes f_{i,d} $ and $\tilde{\epsilon} \coloneqq \epsilon/\left( 3\cdot 2^{d-1} d e^d \norm{g}_{L^q(\mu)} \right)$, then we arrive at
\begin{align}
	&\left|	\int_{\R^d} \left( \cos \scalar{x}{\xi} - F_1(x)	\right) g(x)\,\mu(\intd{x})	\right| \nonumber \\
	&\le \int_{Q}| \cos \scalar{x}{\xi} - F_1(x)| \,|g(x)| \mu(\intd{x}) + \int_{\R^d \setminus Q} | \cos \scalar{x}{\xi} - F_1(x)| \,|g(x)| \mu(\intd{x}) \label{waveletApprox1}
\end{align}
We consider the terms in \eqref{waveletApprox1} separately. We can estimate the first term by
\begin{align}
		\int_{Q}| \cos \scalar{x}{\xi} - F_1(x)| \,|g(x)| \mu(\intd{x})  &\le \sum_{i=1}^{2^{d-1}} \int_Q \left( de^d\right) \tilde{\epsilon} \, |g(x)| \, \mu( \intd{x}) \nonumber \\
		&\le 2^{d-1} d e^d \norm{g}_{L^q(\mu)} \tilde{\epsilon} = \frac{\epsilon}{3}. \label{waveletApprox2}
\end{align}
Likewise, for the second term we infer that
\begin{align}
		&\int_{ \R^d\setminus B} | \cos \scalar{x}{\xi} - F_1(x)| \,|g(x)| \mu(\intd{x}) \nonumber \\
		&\le \sum_{i=1}^{2^{d-1}} \int_{ \R^d \setminus B} \left| \left( \prod_{k=1}^d \cos(\xi_k x_k + a_{i,k} ) \right) 1_{ \times_{k=1}^d I(i,k) } - \prod_{k=1}^d f_{i,k} (x_k) \right| \, |g(x)| \, \mu(\intd{x}) + \ldots \nonumber \\
		&\quad\ldots +  \sum_{i=1}^{2^{d-1}} \int_{ \R^d \setminus B} \left| \left( \prod_{k=1}^d \cos(\xi_k x_k + a_{i,k} ) \right) 1_{ \R^d \setminus \times_{k=1}^d I(i,k) } \right| \, |g(x)| \, \mu(\intd{x}) \nonumber \\
		& \le 2^{d-1} de^d \tilde{\epsilon} \norm{g}_{L^q(\mu)} \mu\left( \R^d\setminus B\right)^{\frac{1}{p}} + 2^{d-1} \norm{g}_{L^q(\mu)}  \mu\left( \R^d \setminus B \right)^{\frac{1}{p}} \nonumber \\
		&= \frac{\epsilon}{3} \cdot \frac{\epsilon}{3 \cdot 2^{d-1} \max(\norm{g}_{L^q(\mu)},1) } + \frac{\epsilon}{3}. \label{waveletApprox3}
\end{align}
All in all, we have when combining \eqref{waveletApprox2} and \eqref{waveletApprox3} that \eqref{waveletApprox1} is less than $\epsilon$ as desired.
\end{proof}

\begin{proof}[Proof of Theorem~\ref{ConsistencyWaveletRegression} and of Theorem~\ref{RateOfConvergenceWaveletRegression}]
We prove that 
$$
		\inf_{ f\in\cF_n, \norm{f}_{\infty} \le \rho_n  } \int_{\R^d} \left|f-m\right|^2\,\intd{\mu_X} \rightarrow 0.
		$$
Let $\epsilon > 0$. Since $\bigcup_{j\in\N} U_j$ is dense in $L^2(\mu_X)$, there is a function $f$ and a $j_0\in\N$ such that for all $j\ge j_0$, we have $f\in U_{j}$ and $\int_{\R^d} |f-m|^2 \,\intd{\mu_X} < \epsilon/4$. We can write for each level $j(n)$
\[
		f = \sum_{\gamma\in K_n} a_{j,\gamma}\, \Psi_{j,\gamma} + \sum_{\gamma\notin K_n} a_{j,\gamma}\, \Psi_{j,\gamma}
\]
for coefficients $a_{j,\gamma}\in\R$. Set $g_n \coloneqq \sum_{\gamma\notin K_n} a_{j,\gamma}\, \Psi_{j,\gamma}$. The support of the $g_n$ decreases monotonically to zero:
\begin{align}
		\{ g_n \neq 0 \} &\subseteq \left\{ x\in \R^d: M^j x - \gamma \in [0,L]^d,\, \norm{\gamma}_{\infty} > w_n \right\} \nonumber \\
				&\subseteq \left\{ x\in \R^d: \norm{ M^j x }_{\infty} \ge \norm{\gamma}_{\infty} - L,\, \norm{\gamma}_{\infty} > w_n \right\}  \nonumber \\
				&\subseteq \left\{ x\in \R^d: \norm{ M^j x }_2 \ge w_n - L \right\}  \nonumber \\
		&\subseteq \left\{ x\in \R^d: \norm{S^{-1}}_2 (\zeta_{max})^j \norm{S}_2 \norm{x }_2 \ge w_n - L \right\}  \downarrow \emptyset \quad (n\rightarrow \infty), \nonumber
\end{align}
by the assumption that $(\zeta_{max})^{j} / w_n \rightarrow 0$ as $n \rightarrow \infty$. Furthermore, there is a $k_1\in \N$ such that we have the estimate $\int_{\R^d}f^2\, \1{ \R^d\setminus [-k_1,k_1]^d } \intd{\mu_X} < \epsilon/4$ for all $k\ge k_1$. Hence, there is a $k_2\in \N$ such that both
\[
	[-k_1,k_1]^d \subseteq \bigcup_{\gamma\in K_{n} } \supp{ \Psi_{j,\gamma}} \text{ and } \norm{f\, \1{ [-k_1,k_1]^d } }_{\infty} \le \rho_n
\]
for all $k\ge k_2$. In particular, the function $f\, \1{[-k_1,k_1]^d }$ is admissible in the sense that it is in $T_{\rho_n} \cF_n$ and that $\int_{\R^d} |m- f\, \1{[-k_1,k_1]^d } |^2 \, \intd{\mu_X} < \epsilon$ as desired.

For the second part, it remains to compute $\kappa_n(\epsilon,\rho_n) = \log H_{T_{\rho_n } \cF_n}\left(  \epsilon/(4\rho_n ) \right)$. We use the bound which is given in Proposition~\ref{boundCoveringNumber}, we have
\begin{align*}
	&H_{T_{\rho_n} \cF_n } \left(  \epsilon/(4\rho_n ) \right) \le 3 \exp\left\{ 2( (2w_n+1)^d+1) \log( 384 e\, \rho_n^2/\epsilon ) \right\}, \\
	&\qquad\qquad\qquad\qquad\qquad\qquad\qquad\qquad\qquad \text{ i.e., } \kappa_n(\epsilon,\rho_n) = \cO\left( w_n^d \log( \rho_n )\right)
\end{align*}
for $\epsilon > 0$ which is arbitrary but fixed. The statement concerning the consistency properties follows now from Theorem~\ref{ConsistencyTruncatedLeastSquaresGeneral}. The statement which concerns the rate of convergence follows from Theorem~\ref{nonParaRegRateOfConvergence}.
\end{proof}

\section*{Acknowledgement}
The author is very grateful to two referees and an associate editor, their comments and suggestions greatly improved the manuscript.

\appendix

\section{Exponential Inequalities for Dependent Sums}\label{Section_ExponentialInequalities}

In this section, we give a short review on important concepts which we use throughout the article. We begin with a result concerning the covering number of a function class $\cG$ of real-valued functions on $\R^d$. Denote the class of all subgraphs of this class $\cG$ by $\cG^+ \coloneqq \big\{ \big\{ (z,t)\in \R^d\times\R: t\le g(z) \big\} : g\in \cG \big\}$ and the Vapnik-Chervonenkis-dimension of $\cG^+$ by $\cV_{\cG^+}$. In this case Condition \ref{coveringCondition} is satisfied if $\epsilon$ is sufficiently small and if the Vapnik-Chervonenkis dimension of $\cG^+$ is at least two. More precisely, we have the following statement

\begin{proposition}[\cite{haussler1992decision}]\label{boundCoveringNumber}
Let $[a,b] \subseteq \R$. Let $\cG$ be a class of uniformly bounded real valued functions $g\colon  \R^d \to [a,b]$ such that $\cV_{\cG^+} \ge 2$. Let $0 < \epsilon < (b-a)/4$. Then for any probability measure $\nu$ on $(\R^d,\cB(\R^d))$
\begin{align*}
		\log \mathsf{N}\left( \epsilon, \cG, \norm{\,\cdot\,}_{L^p(\nu)} \right) \le \log 3 + \cV_{\cG^+} \log\left( \frac{2e (b-a)^p}{\epsilon^p} \,\log\frac{3e(b-a)^p}{\epsilon^p} \right) .
\end{align*}
In particular, $\cV_{\cG^+} \le r+1$ in the case where $\cG$ is an $r$-dimensional linear space.
\end{proposition}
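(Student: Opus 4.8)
The plan is to derive the bound from Haussler's combinatorial covering estimate for Vapnik--Chervonenkis subgraph classes after two elementary reductions, and then to verify the concrete dimension bound $\cV_{\cG^+}\le r+1$ for linear spaces by a direct argument. \emph{Reduction to $L^1$ and rescaling.} Since every $g\in\cG$ takes values in $[a,b]$, for $g,h\in\cG$ we have $|g-h|^p\le(b-a)^{p-1}|g-h|$ pointwise, hence $\norm{g-h}_{L^p(\nu)}^p\le(b-a)^{p-1}\norm{g-h}_{L^1(\nu)}$, so any cover of $\cG$ in $L^1(\nu)$ at radius proportional to $\epsilon^p/(b-a)^{p-1}$ is an $\epsilon$-cover in $L^p(\nu)$; this reduces the claim to a bound on $\mathsf N(\,\cdot\,,\cG,\norm{\,\cdot\,}_{L^1(\nu)})$. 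The affine substitution $g\mapsto(g-a)/(b-a)$ further turns $\cG$ into a class of $[0,1]$-valued functions without changing the subgraph class or its VC dimension $V:=\cV_{\cG^+}$ (the subgraph class is invariant under a strictly increasing reparametrisation of the range), and merely rescales the $L^1$-radius by $1/(b-a)$.

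\emph{The $L^1$ covering bound.} A maximal $\delta$-separated subset of $\cG$ is automatically a $\delta$-cover, so it suffices to bound the $\delta$-packing number in $L^1(\nu)$; here I would follow \cite{haussler1992decision}. One draws $m$ i.i.d.\ points $U_1,\dots,U_m$ from $\nu$ and compares $\norm{\,\cdot\,}_{L^1(\nu)}$ with the empirical $L^1$-distance on $U_1,\dots,U_m$: for $m$ of order $V/\delta$ a given $\delta$-packing survives, with positive probability, as a packing for the empirical distance. The empirical distances depend only on the restrictions of the functions to the finite sample, and by the Sauer--Shelah/Vapnik--Chervonenkis lemma the number of distinct subsets of a finite $n$-point set that $\cG^+$ can cut out is at most $\sum_{i\le V}\binom{n}{i}$; inserting this into the survival estimate and optimising over $m$ yields a packing bound of the shape $e(V+1)(2e/\delta)^V$. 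Undoing the two reductions and absorbing the factor $e(V+1)$ and the remaining lower-order terms into one extra logarithm inside the bracket --- which is legitimate precisely because $\cV_{\cG^+}\ge2$ and $\epsilon<(b-a)/4$ --- produces the stated inequality $\log\mathsf N\le\log3+\cV_{\cG^+}\log\!\big(\tfrac{2e(b-a)^p}{\epsilon^p}\log\tfrac{3e(b-a)^p}{\epsilon^p}\big)$. This symmetrisation-and-counting step is the real content and the source of the sharp constants and the $\log$-of-$\log$ factor; in the present paper it is invoked only as a black box.

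\emph{Linear spaces.} Suppose $\cG\subseteq\operatorname{span}(f_1,\dots,f_r)$; by monotonicity of $\cV_{(\,\cdot\,)^+}$ we may assume equality. Let $(z_1,t_1),\dots,(z_N,t_N)$ be shattered by $\cG^+$ and suppose, for contradiction, $N\ge r+2$. The linear map $\R^N\to\R^r$, $\mu\mapsto\big(\sum_{i=1}^N\mu_i f_j(z_i)\big)_{j=1}^r$, has kernel of dimension $\ge N-r\ge2$, so even after imposing the single further constraint $\sum_i\mu_i t_i=0$ there is a nonzero $\mu\in\R^N$ with $\sum_i\mu_i f_j(z_i)=0$ for all $j$ and $\sum_i\mu_i t_i=0$. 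Then for every $g=\sum_j a_j f_j\in\cG$,
\[
\sum_{i=1}^N\mu_i\big(g(z_i)-t_i\big)=\sum_{j=1}^r a_j\sum_{i=1}^N\mu_i f_j(z_i)-\sum_{i=1}^N\mu_i t_i=0 .
\]
After replacing $\mu$ by $-\mu$ if necessary we may assume $\mu$ has a positive coordinate; put $S:=\{i:\mu_i>0\}$ if $\mu$ also has a negative coordinate and $S:=\emptyset$ otherwise. Shattering provides $g\in\cG$ with $g(z_i)\ge t_i$ exactly for $i\in S$, and for this $g$ every summand $\mu_i(g(z_i)-t_i)$ is $\ge0$ in the first case and $\le0$ in the second, with at least one strict inequality, so $\sum_i\mu_i(g(z_i)-t_i)\ne0$, contradicting the display. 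Hence $N\le r+1$. The main obstacle is the second step --- the symmetrisation together with the Sauer--Shelah counting that yield the precise constants and the $\log$-of-$\log$ factor; the reduction and the linear-space computation are routine.
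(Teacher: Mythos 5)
The paper does not prove this proposition at all: it is quoted from \cite{haussler1992decision} (it is in substance Theorems 9.4 and 9.5 of \cite{gyorfi}) and is used everywhere as a black box, so your proposal does strictly more than the paper. The parts you actually carry out are correct. The two reductions are sound: $|g-h|^p\le (b-a)^{p-1}|g-h|$ pointwise converts an $L^1(\nu)$-cover at scale $\epsilon^p/(b-a)^{p-1}$ into an $\epsilon$-cover in $L^p(\nu)$, and the affine renormalisation of the range maps the subgraph class bijectively onto the subgraph class of the rescaled family, so $\cV_{\cG^+}$ is unchanged. Your proof that an $r$-dimensional linear space has subgraph VC dimension at most $r+1$ is complete and is the standard argument; in particular your case split between ``$\mu$ has coordinates of both signs'' and ``$\mu\ge 0$'' correctly disposes of the degenerate case in which the naive choice $S=\{i:\mu_i>0\}$ would only give a non-strict inequality. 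What remains unproved is the central $L^1$ packing bound with the exact constants $2e$, $3e$ and the $\log$-of-$\log$ factor: your symmetrisation-plus-Sauer--Shelah sketch describes Haussler's argument correctly in outline, but as written it is only a sketch (the sample size in the survival step must also grow with the logarithm of the packing size, and one needs a discretisation of the range to pass from the sets cut out by $\cG^+$ on the augmented sample to empirical $L^1$ balls), and you explicitly defer to the reference for these details. Since the paper itself invokes the result purely by citation, this level of rigour is acceptable here; just be aware that your write-up is a correct reduction plus a proof sketch of the covering bound, not a self-contained proof of it.
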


Davydov's inequality relates the covariance of two random variables to the $\alpha$-mixing coefficient:
\begin{proposition}[\cite{davydov1968convergence}]\label{davydovsIneq}
Let $\pspace$ be a probability space and let $\cG, \cH$ be sub-$\sigma$-algebras of $\cA$. Set $\alpha \coloneqq \sup\{ |\p(A \cap B) - \p(A)\p(B) |:\, A \in \cG, B\in \cH \}$. Let $p,q,r \ge 1$ be H{\"o}lder conjugate, i.e., $p^{-1} + q^{-1} + r^{-1} =1$. Let $\xi$ (resp. $\eta$) be in $L^p(\p)$ and $\cG$-measurable (resp. in $L^q(\p)$ and $\cH$-measurable). Then $\left| \text{Cov}(\xi,\eta) \right|  \le 10\, \alpha^{1/r} \norm{\xi}_{L^p(\p)} \norm{\eta}_{L^q(\p)}$.
\end{proposition}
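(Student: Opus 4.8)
The statement is Davydov's classical covariance inequality, so the plan is to reproduce the standard bounded-case-plus-truncation argument, carried out carefully enough to land on the constant~$10$. First I would record the bounded case: if $U$ is $\cG$-measurable with $|U|\le a$ a.s.\ and $V$ is $\cH$-measurable with $|V|\le b$ a.s., then $|\operatorname{Cov}(U,V)|\le 4ab\,\alpha$. By homogeneity it suffices to treat $a=b=1$. Choosing a constant $c$ with $|U-c|\le 1$ and using that $U$ is $\cG$-measurable, one has $\operatorname{Cov}(U,V)=\E{(U-c)\big(\E{V\mid\cG}-\E V\big)}$, hence $|\operatorname{Cov}(U,V)|\le\E{\big|\E{V\mid\cG}-\E V\big|}$. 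Writing $g\coloneqq\operatorname{sgn}\big(\E{V\mid\cG}-\E V\big)$, a $\cG$-measurable function with values in $\{-1,0,1\}$, the right-hand side equals $\operatorname{Cov}(g,V)$; applying the same step in the other variable gives $\operatorname{Cov}(g,V)\le\E{\big|\E{g\mid\cH}-\E g\big|}=\operatorname{Cov}(k,g)$ for some $\cH$-measurable $k$ with values in $\{-1,0,1\}$. Finally, writing $g=\mathds{1}_{A_+}-\mathds{1}_{A_-}$ with disjoint $A_\pm\in\cG$ and $k=\mathds{1}_{B_+}-\mathds{1}_{B_-}$ with disjoint $B_\pm\in\cH$, I would expand $\operatorname{Cov}(k,g)$ into four covariances of indicators and bound each by $|\p(A\cap B)-\p(A)\p(B)|\le\alpha$; this gives $|\operatorname{Cov}(U,V)|\le 4\alpha$, and rescaling finishes the bounded case.

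Next I would dispose of the case $r=\infty$ (i.e.\ $p^{-1}+q^{-1}=1$) immediately by H\"older, since then $|\operatorname{Cov}(\xi,\eta)|\le\E{|\xi\eta|}+\E{|\xi|}\,\E{|\eta|}\le 2\norm{\xi}_{L^p(\p)}\norm{\eta}_{L^q(\p)}$ while $\alpha^{1/r}=1$. For $r<\infty$, after normalising $\norm{\xi}_{L^p(\p)}=\norm{\eta}_{L^q(\p)}=1$ by homogeneity (the case $\alpha=0$ being trivial, $\cG$ and $\cH$ then being independent), I would truncate: for $M,N>0$ set $\xi'\coloneqq T_M\xi$, $\xi''\coloneqq\xi-\xi'$, $\eta'\coloneqq T_N\eta$, $\eta''\coloneqq\eta-\eta'$. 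Then $\xi',\xi''$ are $\cG$-measurable, $\eta',\eta''$ are $\cH$-measurable, and since $p,q\ge1$ one has $\E{|\xi''|}\le M^{1-p}$, $\p(\xi''\neq0)\le M^{-p}$, with the symmetric bounds for $\eta$. Expanding
\begin{align*}
\operatorname{Cov}(\xi,\eta)=\operatorname{Cov}(\xi',\eta')+\operatorname{Cov}(\xi',\eta'')+\operatorname{Cov}(\xi'',\eta')+\operatorname{Cov}(\xi'',\eta''),
\end{align*}
I would bound the first term by the bounded case ($\le 4MN\alpha$); the second and third by the crude estimate $|\operatorname{Cov}(U,W)|\le\E{|UW|}+\E{|U|}\,\E{|W|}$, which gives $\le 2MN^{1-q}$ and $\le 2NM^{1-p}$; and the fourth by $|\operatorname{Cov}(\xi'',\eta'')|\le\E{|\xi''\eta''|}+\E{|\xi''|}\,\E{|\eta''|}$, where $\E{|\xi''\eta''|}\le\E{|\xi|\,|\eta|\,\1{|\xi|>M}\,\1{|\eta|>N}}\le(M^{-p}\wedge N^{-q})^{1/r}$ by H\"older with exponents $p,q,r$, and $\E{|\xi''|}\,\E{|\eta''|}\le M^{1-p}N^{1-q}$.

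Finally, taking $M=\alpha^{-1/p}$ and $N=\alpha^{-1/q}$ (both $\ge1$ since $\alpha\le1$) turns the five bounds into $4\alpha^{1/r}$, $2\alpha^{1/r}$, $2\alpha^{1/r}$, $\alpha^{1/r}$ and $\alpha^{1+1/r}\le\alpha^{1/r}$, respectively; summing yields $|\operatorname{Cov}(\xi,\eta)|\le 10\,\alpha^{1/r}$, and undoing the normalisation gives the asserted $|\operatorname{Cov}(\xi,\eta)|\le 10\,\alpha^{1/r}\norm{\xi}_{L^p(\p)}\norm{\eta}_{L^q(\p)}$. I expect the only genuinely delicate point to be the bounded case — the iterated conditional-expectation/sign reduction down to covariances of indicators and the squeezing of the constant to~$4$ — while the truncation step is routine bookkeeping, the remainder-times-remainder term being the one that most needs care so that every error term carries exactly the exponent $1/r$. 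An alternative is to invoke Rio's covariance (quantile) inequality instead of the truncation, but that merely relocates the work to proving Rio's inequality.
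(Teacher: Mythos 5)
The paper itself offers no proof of this proposition: it is imported verbatim from the literature with the citation to Davydov (1968), so there is nothing internal to compare your argument against. Your write-up is the standard self-contained proof, and it is correct: the bounded-case reduction via iterated conditioning and signs down to four indicator covariances gives the sharp constant $4ab\,\alpha$, the truncation at $M=\alpha^{-1/p}$, $N=\alpha^{-1/q}$ is the right choice, and the five error terms do come out as $4\alpha^{1/r}+2\alpha^{1/r}+2\alpha^{1/r}+\alpha^{1/r}+\alpha^{1+1/r}$, which sums to at most $10\,\alpha^{1/r}$ after the normalisation $\norm{\xi}_{L^p(\p)}=\norm{\eta}_{L^q(\p)}=1$; the tail bounds $\E{|\xi''|}\le M^{1-p}$ and $\p(\xi''\neq 0)\le M^{-p}$ and the three-exponent H\"older step are all applied correctly. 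The only loose end is the degenerate case $p=\infty$ or $q=\infty$ (allowed by the hypothesis $p^{-1}+q^{-1}+r^{-1}=1$), where the exponent $M^{1-p}$ is not literally meaningful; there the corresponding truncation remainder vanishes identically and the argument collapses to fewer terms with a smaller constant, so it is worth one sentence but costs nothing. For the paper's actual use of the inequality ($p=q=2+\gamma$, $r=(2+\gamma)/\gamma$) your argument applies verbatim.
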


In the remaining part of this appendix we derive upper bounds on the probability of events of the type
\begin{align}\label{supMeas1}
		\left \{ \sup_{g\in\cG} \left||I_n|^{-1} \sum_{s \in I_n} g(Z(s)) - \E{g(Z(e_N) )} \right| > \epsilon \right  \},
\end{align}
where $\cG$ is a class of functions and $Z$ is an $\alpha$-mixing random field on $\Z^N$. (Here we assume that $\cG$ is sufficiently regular, so that this set is indeed measurable.)

\begin{proposition}\label{applBernstein}
Let the real valued random field $Z$ satisfy Condition \ref{regCond0} \ref{Cond_AlphaMixing}. The $Z(s)$ have expectation zero and are bounded by $B$. Assume that $n\in \N_+^N$ satisfies
$ \min\{ n_i: i=1,\ldots,N \} / \max\{ n_i: i=1,\ldots,N \}  \ge C'$,
for a constant $C'> 0$. Then there is a constant $A\in \R_+$ which depends on the lattice dimension $N$, the constant $C'$ and the bound on the mixing coefficients but neither on $n\in\N_+^N$, nor on $\epsilon$, nor on $B$ such that for all $\epsilon>0$
\begin{align*}
		\p\Big( |I_n|^{-1} \Big| \sum_{s\in I_n} Z(s) \Big| \ge \epsilon \Big) \le \exp\left(-\frac{A |I_n|^{1/N} \epsilon^2 }{B^2 + B \epsilon  (\log |I_n| )(\log \log |I_n|)}	\right).
\end{align*}
\end{proposition}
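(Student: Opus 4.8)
The plan is to run Bernstein's method of blocks together with an $\alpha$-mixing coupling, preceded by a dimension-reduction step which is precisely what produces the exponent $|I_n|^{1/N}$ rather than $|I_n|$. Write $m\coloneqq\max\{n_i:i=1,\ldots,N\}$. The ratio hypothesis forces $n_i\ge C'm$ for every $i$, hence $(C')^N m^N\le|I_n|\le m^N$, so $m\asymp|I_n|^{1/N}$ with constants depending only on $C'$ and $N$. The first move is to collapse all but one coordinate direction: for $s_1\in\{1,\ldots,n_1\}$ set $W(s_1)\coloneqq\sum_{(s_2,\ldots,s_N)}Z(s_1,s_2,\ldots,s_N)$, so that $\sum_{s\in I_n}Z(s)=\sum_{s_1=1}^{n_1}W(s_1)$. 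Each $W(s_1)$ is centred and satisfies $|W(s_1)|\le B\,n_2\cdots n_N\le Bm^{N-1}=:\tilde B$; moreover $\sigma(W(s_1))\subseteq\cF(\{s\in\Z^N:\text{first coordinate}=s_1\})$, and two such index sets whose first coordinates differ by at least $k$ are $d_\infty$-separated by at least $k$, so the one-dimensional sequence $(W(s_1))_{s_1}$ is $\alpha$-mixing with coefficients bounded by those of $Z$, i.e.\ $\le c_0 e^{-c_1 k}$.

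It then suffices to bound $\p(n_1^{-1}|\sum_{s_1}W(s_1)|\ge\delta)$ with $\delta\coloneqq|I_n|\epsilon/n_1\asymp m^{N-1}\epsilon$, using a one-dimensional Bernstein inequality of the shape $\exp(-A'\,n_1\delta^2/(\tilde B^2+\tilde B\delta(\log n_1)(\log\log n_1)))$. Plugging in $\tilde B=Bm^{N-1}$, $\delta\asymp m^{N-1}\epsilon$, $n_1\asymp m$ and cancelling the common factor $m^{2N-2}$ turns this exponent into $A'' m\epsilon^2/(B^2+B\epsilon(\log n_1)(\log\log n_1))$; since $\log n_1\asymp\log|I_n|$ and $m\asymp|I_n|^{1/N}$ this is exactly the claimed bound (the $1/N$ inside the logarithms and all ratio constants being absorbed into the final $A$). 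The point of the reduction is transparent here: only the ``length'' $n_1\asymp|I_n|^{1/N}$ of the surviving direction is exploited, the other $m^{N-1}$ sites having been spent crudely on the $L^\infty$-bound $\tilde B$.

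For the one-dimensional ingredient I would either invoke a standard exponential inequality for geometrically $\alpha$-mixing sequences or prove it by the classical big-block/small-block scheme: partition $\{1,\ldots,n_1\}$ into alternating blocks of a common length $\ell\asymp\log n_1$, yielding two families of $\asymp n_1/\ell$ block-sums whose members are pairwise $d_\infty$-separated by at least $\ell$; a union bound reduces the estimate to a single family. Within a family, replace the (real-valued) block-sums by mutually independent copies with the same marginal laws, using a coupling for $\alpha$-mixing random fields applied recursively as in \cite{carbon1997kernel}, and control the total substitution error by (number of blocks)$\times\tilde B\times\alpha(\ell)$, which is made negligible by choosing the constant in $\ell\asymp\log n_1$ large (here $\alpha(\ell)\le c_0 e^{-c_1\ell}$ is used). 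To the independent surrogate sums, each bounded by $\tilde B\ell$ and $\asymp n_1/\ell$ in number, one applies Hoeffding's (or Bernstein's) inequality; a routine computation gives the exponent $n_1\delta^2/(\tilde B^2+\tilde B\delta\,\ell)$, and the second logarithmic factor $\log\log n_1$ enters when the block length and the residual small-block mass are made simultaneously admissible. Undoing the reduction of the previous paragraph finishes the proof.

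The main obstacle is the coupling step. In contrast with the $\beta$-mixing setting of Lemma~\ref{CouplingBetaMixing}, where Berbee's lemma yields an exact match $\{Z^*=Z\}$ off an event of probability $\beta(q)$, an $\alpha$-mixing field admits only an $L^1$-type coupling, so the substitution error must be absorbed through a Markov-type estimate rather than a clean probability bound; getting this uniform in $\epsilon$ and $B$ is exactly what forces the enlargement of the block length (hence the extra $\log\log$) and the acceptance of the weaker exponent $|I_n|^{1/N}$ in place of $|I_n|$. A secondary but necessary check is that the one-dimensional mixing coefficients of the collapsed rows are genuinely dominated by those of the field, which relies on the random-field $\alpha$-mixing coefficient being defined as a supremum over \emph{all} index sets, including interlaced ones.
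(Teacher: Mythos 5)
Your proposal is correct and is essentially the paper's own argument: the proof in Section~\ref{Section_ExponentialInequalities} performs exactly this collapse onto a single coordinate direction (the one attaining the minimum of the $n_i$), notes that the resulting row-sums form a geometrically strongly mixing time series bounded by $B$ times $m^{N-1}$, and then directly invokes Theorem~1 of \cite{merlevede2009bernstein}, whose exponent has precisely the shape $x^2/(nv^2+M^2+xM(\log n)(\log\log n))$ that you posited, with the same cancellation of $m^{2N-2}$ yielding the stated bound. Your back-up plan of re-proving the one-dimensional inequality by blocking and an $\alpha$-mixing coupling is not needed (and is indeed the delicate part you identify); the citation route is the one taken.
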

\begin{proof}
One can apply the exponential inequality of \cite{merlevede2009bernstein} for strongly mixing time series to the random field $Z$ as follows: consider a fixed $n\in\N^N$ and set $j^* = \min\{1\le j\le N: n_j = \min\{ n_i: i=1,\ldots,N\} \}$. Then define a time series $Y$ by
$$
		Y_k = \sum_{s\in I_n, s_{j^*} = k } Z_s, \quad k=1,\ldots, n_{j^*}.
$$
We have that the $Y_k$ are bounded by $B (C' n_{j^*} )^{N-1}$. This time series is strongly mixing with exponentially decreasing mixing coefficients (in the sense of the weaker definition for time series, cf. \cite{doukhan1991mixing}). The result follows now from Theorem 1 of \cite{merlevede2009bernstein}.
\end{proof}

We can prove with the previous proposition an important statement

\begin{theorem}\label{USLLNM}
Assume that the conditions of Proposition~\ref{applBernstein} are satisfied. Let $\cG$ be a set of measurable functions $g\colon \R^d \rightarrow [0,B]$ for $B\in [1,\infty)$ which satisfies Condition \ref{coveringCondition} and assume that \eqref{supMeas1} is measurable. Then there is a constant $A$ which is independent of $\epsilon$, $n$ and $B$ such that for all $\epsilon>0$
\begin{align}
\begin{split}\label{USLLNMEq0}
&\p\left( \sup_{g \in \cG} \left| |I_n|^{-1} \sum_{ s \in I_n} g(Z(s)) - \E{ g(Z(e_N) )}		\right| \ge \epsilon		\right)  \\
&\quad\le 
10 H_{\cG}\left( \frac{\epsilon}{32} \right) \left\{	\exp\left( - \frac{|I_n| \epsilon^2}{512 B^2} \right) + \exp\left(-\frac{A |I_n|^{1/N} \epsilon^2 }{B^2 + B \epsilon  (\log |I_n| )^2 }	\right)	 \right\}.
\end{split}
\end{align}
\end{theorem}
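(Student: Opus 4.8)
The argument follows the symmetrisation scheme used for Theorem~9.1 in \cite{gyorfi} for i.i.d.\ data; the two exponential terms in \eqref{USLLNMEq0} appear because the sample $\{Z(s):s\in I_n\}$ is only $\alpha$-mixing, whereas the ghost sample brought in below is genuinely i.i.d. If the right-hand side of \eqref{USLLNMEq0} is at least $1$ there is nothing to prove, so I may assume the two exponents there are sufficiently negative; in particular $|I_n|$ is large enough, relative to $\epsilon$ and $B$, for the symmetrisation step below to apply and for the bounds below to be non-vacuous.

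\emph{Step~1 (i.i.d.\ ghost symmetrisation).} Let $Z'=\{Z'(s):s\in\Z^N\}$ be i.i.d.\ with marginal $\mathcal L_{Z(e_N)}$ and independent of $Z$, and set $\mu_n g\coloneqq|I_n|^{-1}\sum_{s\in I_n}g(Z(s))$, $\mu_n' g\coloneqq|I_n|^{-1}\sum_{s\in I_n}g(Z'(s))$ and $\mu g\coloneqq\E{g(Z(e_N))}$. Because $Z'$ is i.i.d.\ and $\cG$ is uniformly bounded by $B$, Hoeffding's inequality gives $\sup_{g\in\cG}\p(|\mu_n' g-\mu g|>\epsilon/2)\le\tfrac12$ in the regime just fixed, and the standard symmetrisation argument (as in the proof of Theorem~9.1 in \cite{gyorfi}) yields
\[
\p\Big(\sup_{g\in\cG}|\mu_n g-\mu g|>\epsilon\Big)\ \le\ 2\,\p\Big(\sup_{g\in\cG}|\mu_n g-\mu_n' g|>\tfrac{\epsilon}{2}\Big).
\]

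\emph{Step~2 (a $Z$-free net, and the two terms).} Take $g_1,\dots,g_M$ to be an $\epsilon/32$-cover of $\cG$ in the $L^1$-norm of the ghost empirical measure $\mu_n'$; by Condition~\ref{coveringCondition} we have $M\le H_{\cG}(\epsilon/32)$ and, crucially, $g_1,\dots,g_M$ is a function of $Z'$ only, hence independent of $Z$. For $g\in\cG$ choose $g_j$ with $\mu_n'|g-g_j|<\epsilon/32$ and decompose
\[
\mu_n g-\mu_n' g=\mu_n(g-g_j)+(\mu_n g_j-\mu g_j)-(\mu_n' g_j-\mu g_j)+\mu_n'(g_j-g),
\]
where $|\mu_n'(g_j-g)|<\epsilon/32$. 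For the ghost term, $\max_j|\mu_n' g_j-\mu g_j|\le\sup_{g\in\cG}|\mu_n' g-\mu g|$, which by Theorem~9.1 in \cite{gyorfi} applied to the i.i.d.\ field $Z'$ together with Condition~\ref{coveringCondition} is at most $8\,H_{\cG}(\epsilon/32)\exp(-|I_n|\epsilon^2/(512B^2))$ --- the first term of \eqref{USLLNMEq0}. The sample term $\mu_n g_j-\mu g_j$ is, conditionally on $Z'$, a centred $\alpha$-mixing average over $I_n$ of the fixed, $B$-bounded function $g_j$, so Proposition~\ref{applBernstein} (using also $(\log|I_n|)(\log\log|I_n|)\le(\log|I_n|)^2$) bounds its deviation by $\exp(-A|I_n|^{1/N}\epsilon^2/(B^2+B\epsilon(\log|I_n|)^2))$; since the net is $Z$-independent, a genuine union bound over the $\le H_{\cG}(\epsilon/32)$ functions $g_j$ is legitimate and gives the second term of \eqref{USLLNMEq0}.

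\emph{Step~3 and the main obstacle.} What remains, and what I expect to be the hard part, is the cover error $\mu_n(g-g_j)$: the net approximates $g$ in the ghost's empirical measure $\mu_n'$, not in the dependent sample's empirical measure $\mu_n$, so $\mu_n|g-g_j|$ is not automatically small. In the i.i.d.\ proof one avoids this by inserting Rademacher signs into the symmetrised sum before netting, which is legitimate because swapping $Z(s)\leftrightarrow Z'(s)$ over an arbitrary set of sites preserves the joint law; for an $\alpha$-mixing field this is false, as such a swap decorrelates part of $Z$. I would instead control $\sup_{g}\mu_n|g-g_j|$ by a peeling argument over the enlarged class $\{|g-h|:g\in\cG,\ h\in\{g_1,\dots,g_M\}\}$ --- uniformly bounded by $B$, covering number at most $H_{\cG}(\cdot)^2$, and of small $\mu_X$-mean because $\mu_n'|g-g_j|$ is small and the ghost is i.i.d.\ --- handled once more by Proposition~\ref{applBernstein} against $Z$ and by Theorem~9.1 in \cite{gyorfi} against $Z'$; the exponentials this produces are of the same two types and of smaller order, hence absorbed. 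Finally, the factor $2$ from Step~1 and all numerical constants are absorbed into the leading constant $10$; whenever an exponent fails to be negative enough along the way, one is back in the trivial regime isolated at the outset.
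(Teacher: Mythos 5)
Your Steps 1 and 2 follow the same broad strategy as the paper: an i.i.d.\ ghost sample, an $\epsilon/32$-net whose cardinality is controlled deterministically by $H_{\cG}(\epsilon/32)$ via Condition~\ref{coveringCondition}, Theorem~9.1 of \cite{gyorfi} on the ghost side and Proposition~\ref{applBernstein} on the dependent side, which is exactly where the two exponential terms in \eqref{USLLNMEq0} come from. The genuine gap is precisely where you locate it, in Step~3, and the fix you sketch does not close it. Because you build the net from the ghost empirical measure $\mu_n'$ alone, you are left with the cover error $\mu_n|g-g_j|$ under the \emph{dependent} empirical measure, and the proposed ``peeling over the enlarged class $\{|g-h|\}$, handled once more by Proposition~\ref{applBernstein} against $Z$'' is circular: Proposition~\ref{applBernstein} is a concentration inequality for a \emph{single} bounded function, so converting it into a bound on $\sup_{h}|\mu_n h-\mu h|$ over that class requires another net and, again, control of the new cover error under $\mu_n$ --- which is exactly the statement being proved, now for a class with covering bound $H_{\cG}(\cdot)^2$. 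The recursion does not terminate, and nothing gets ``absorbed''.

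The paper avoids this by taking the $\epsilon/32$-cover with respect to the $L^1$-norm of the \emph{combined} empirical measure $\tfrac{1}{2|I_n|}\sum_{s\in I_n}\bigl(\delta_{Z(s)}+\delta_{Z'(s)}\bigr)$. The cells $U_k$ are then defined so that, for every $g\in U_k$, the cover error is at most $\epsilon/16$ under both empirical measures simultaneously and \emph{by construction}, so no residual term of your Step~3 type appears; the only price is that the centres $g_k^{\ast}$ now depend on $Z$ as well as $Z'$. That is what Condition~\ref{coveringCondition} is designed to pay for: the bound $H_{\cG}(\epsilon/32)$ on the number of cells is deterministic no matter which (data-dependent) points the discrete measure charges, so the union bound over the at most $H_{\cG}(\epsilon/32)$ cells remains legitimate, and each centre is then handled by splitting $|S_n(g_k^{\ast})-S_n'(g_k^{\ast})|$ around the common mean, with Hoeffding's inequality for the ghost part and Proposition~\ref{applBernstein} for the dependent part. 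If you replace your ghost-only net in Step~2 by the combined-sample net and delete Step~3, your argument coincides with the paper's.
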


\begin{proof}[Proof of Theorem \ref{USLLNM}]
We assume the probability space to be endowed with the i.i.d.\ random variables $Z'(s)$ for $s \in I_n$ which have the same marginal laws as the $Z(s)$. We write for shorthand
\[
		S_n(g) := \frac{1}{ |I_n|} \sum_{s\in I_n} g(Z(s)) \text{ and } S'_n(g) := \frac{1}{|I_n|} \sum_{s\in I_n} g(Z'(s)).
\]
We can decompose the probability with these definitions as follows
\begin{align}
	&\p\left(	\sup_{g\in\cG} \left| S_n(g) - \E{ g(Z(e_N) )} \right| \ge \epsilon	\right) \nonumber\\
		&\le \p\left(	\sup_{g \in \cG} \left| S_n(g) - S'_n(g) \right| \ge \frac{\epsilon}{2}\right) + \p\left(	\sup_{g \in \cG} \left| S'_n(g) - \E{ g(Z'(e_N) )} \right| \ge  \frac{\epsilon}{2}\right). \label{USLLNM1}
\end{align}
Next, we apply Theorem 9.1 from \cite{gyorfi} to second term on the right-hand side of \eqref{USLLNM1} and obtain
\begin{align}
		\p\left(	\sup_{g \in \cG} \left| S'_n(g) - \E{ g(Z'(e_N) )} \right| \ge \frac{\epsilon}{2}\right) \le 8 H_{\cG} \left( \frac{\epsilon}{16} \right) \exp\left( - \frac{|I_n| \epsilon^2}{512 B^2} \right). \label{USLLNM1b} 
\end{align}
To get a bound on the first term of the right-hand side of \eqref{USLLNM1}, we use Condition \ref{coveringCondition} to construct an $\epsilon/32$-covering. Write $H^*:=H_{\cG} \left( \frac{\epsilon}{32} \right)$ for the upper bound on the covering number. Let $g_k^{\ast}$ for $k=1,\ldots,H^*$ be as in Condition \ref{coveringCondition}. Define
\[
		U_k := \left\{ g\in \cG: \frac{1}{2|I_n|} \sum_{s\in I_n} \Big|g(Z(s))-g^{\ast}_k(Z(s))\Big| + \Big|g(Z'(s)) - g^{\ast}_k(Z'(s))\Big| \le \frac{\epsilon}{32}		\right\}.
\]
Then
\begin{align}
		&\p\left(	\sup_{g \in \cG} \left| S_n(g) - S'_n(g) \right| \ge  \frac{\epsilon}{2}\right)  \le \sum_{k=1}^{H^*} \p\left(	\sup_{g \in U_k} \left| S_n(g) - S'_n(g) \right| \ge  \frac{\epsilon}{2}\right). \label{USLLNM2}
\end{align}
Thus, using the approximating property of the functions $g^*_k$, we get for each probability in \eqref{USLLNM2}
\begin{align}
	&\p\left(	\sup_{g \in U_k} \left| S_n(g) - S'_n(g) \right| \ge  \frac{\epsilon}{2}\right) \nonumber\\
	&\quad \le \p\left( \left| S_n(g^*_k) - S'_n(g^*_k) \right|\ge  \frac{7\epsilon}{16} 	\right) \nonumber \\
	\begin{split}
	&  \le \p\left( \left|S_n(g^*_k) - \E{ g^*_k(Z(e_N) )} \right|	\ge  \frac{7\epsilon}{32} 	\right) \\
	&\quad + 
	 \p\left( \left|S'_n(g^*_k) - \E{ g^*_k(Z'(e_N) )} \right|	\ge \frac{7\epsilon}{32} 	\right). \label{USLLNM4}
	\end{split}
\end{align}
The second term on the right-hand side of \eqref{USLLNM4} can be estimated using Hoeffding's inequality, we have
\begin{align*}
		\p\left( \left|S'_n(g^*_k) - \E{ g^*_k(Z'(e_N))} \right|	\ge \frac{7\epsilon}{32} 	\right) \le 2 \exp \left\{ - \frac{98 \, |I_n| \, \epsilon^2}{32^2 \, B^2}		\right\}. 
\end{align*}
We apply Proposition \ref{applBernstein} to the first term of \eqref{USLLNM4}. Finally, we use that $H_{\cG} \left(\frac{\epsilon}{16} \right) \le H_{\cG} \left(\frac{\epsilon}{32} \right)$. 
\end{proof}

\section{Details on Example~\ref{MRAByTensorProduct}}\label{Appendix_Example}
We show how to derive an isotropic MRA in $d$ dimensions from a one-dimensional MRA. It is straightforward to show that for a multiresolution analysis with corresponding scaling function $\Phi$ there is a sequence $( a_0(\gamma): \gamma \in \Gamma) \subseteq \R$ such that $\Phi \equiv \sum_{\gamma \in \Gamma} a_0(\gamma)\, \Phi(M\darg - \gamma)$ and the coefficients $a_0(\gamma)$ satisfy the equations $a_0(\gamma) = |M| \int_{\R^d} \Phi(x)\, \Phi(Mx-\gamma)\, \intd{x}$ as well as $\sum_{\gamma\in\Gamma} |a_0(\gamma)|^2 = |M| = \sum_{\gamma\in \Gamma} a_0(\gamma)$.

In the first step, we show that the conditions for an MRA are satisfied. The spaces $\bigcup_{j\in\Z} U_j$ are dense: we have by the definition
\[
	U_j = \bigotimes_{i=1}^{d} U'_j = \left\langle	f_1\otimes\ldots\otimes f_d: f_i \in U'_j \; \forall i=1,\ldots,d	\right\rangle.
\]
Note that the set of pure tensors $\left\langle g_1\otimes \ldots \otimes g_d: g_i \in L^2(\lambda) \right\rangle$ is dense in $L^2(\lambda^d)$. Hence, it only remains to show that we can approximate any pure tensor $g_1\otimes\ldots\otimes g_d$ by a sequence $(F_j\in U_j: j \in \N_+)$. Let $\epsilon > 0 $ and let $g_1\otimes \ldots \otimes g_d \in L^2(\lambda^d)$ be a a pure tensor. Choose a sequence of pure tensors $(f_{i,j}: j \in \N_+ )$ converging to $g_i$ in $L^2(\lambda)$ for $i=1,\ldots,d$. Denote by $L\coloneqq \sup\left\{ \norm{f_{i,j}}_{L^2(\lambda)}, \norm{g_i}_{L^2(\lambda)}: j \in \Z, i=1,\ldots,d \right\} < \infty.$ Then
\begin{align*}
		&\norm{ g_1\otimes \ldots\otimes g_d - f_{1,j} \otimes \ldots \otimes f_{d,j} }^2_{L^2(\lambda^d) } \\
		&\le d^2 L^{2(d-1) } \max_{1\le i \le d} \norm{ g_i - f_{i,j}}^2_{L^2(\lambda)} \rightarrow 0 \text{ as } j \rightarrow \infty.
\end{align*}
Furthermore, $\bigcap_{j\in \Z} U_j = \{0\}$: Let $f = \sum_{i=1}^n a_i \,f_{i,1} \otimes \ldots \otimes f_{i,d}$ be an element of each $U_j$. Then each $f_{i,k}$ is an element of each $U'_j$ for all $j$ and, hence, zero.
The scaling property is immediate, too. Indeed,
\begin{align*}
	&f \in U_j \Leftrightarrow f = \sum_{i=1}^n a_i f_{i,1} \otimes \ldots \otimes f_{i,d} \text{ and } f_{i,k} \in U'_j, \quad k = 1,\ldots,d \\
	&\qquad \qquad \Leftrightarrow f = \sum_{i=1}^n a_i f_{i,1} \otimes \ldots \otimes f_{i,d} \text{ and } f_{i,k}(2^{-j} \darg) \in U'_0 \Leftrightarrow f(M^{-j}\darg) \in U_0.
\end{align*}
The functions $\{\Phi(\darg - \gamma): \gamma \in \Gamma \}$ form an orthonormal basis of $U_0$. We have for $\gamma, \gamma' \in \Z^d$
\begin{align*}
		&\int_{\R^d} \Phi(x-\gamma)\, \Phi(x-\gamma')\, \intd{x} = \int_{\R^d} \otimes_{k=1}^{d} \phi(x_k-\gamma_k) \cdot \otimes_{k=1}^{d} \phi(x_k-\gamma'_k) \,\intd{x} \\
		&= \prod_{k=1}^{d} \int_{\R} \phi(x_k-\gamma_k)\phi(x_k-\gamma'_k)\,\intd{x_k} = \delta_{\gamma, \gamma'}
\end{align*}
and for each $f\in U_0$ by definition $f=\sum_{i=1}^{n} a_i\, \phi(\darg - \gamma_{1}^{i} ) \cdot\ldots\cdot \phi(\darg - \gamma_{d}^{i}) = \sum_{i=1}^n a_i \Phi( \darg - \gamma^{i} )$ for $\gamma^{1},\ldots,\gamma^{n}\in \Z^d$. This proves that $\Phi$ together with the linear spaces $U_j$ generates an MRA of $L^2(\lambda^d)$. It remains to prove that the wavelets generate an orthonormal basis of $L^2(\lambda^d)$.

For an index $k\in \bigtimes_{i=1}^d \{0,1\}$, define $a^{k_i}_l$ by $\sqrt{2} h_l$ if $k_i=0$ and $\sqrt{2} g_l$ if $k_i = 1$ for $i=1,\ldots,d$. Furthermore, set $a_k(\gamma) \coloneqq a^{k_1}_{\gamma_1} \cdot\ldots\cdot a^{k_d}_{\gamma_d}$. Then, the scaling function and the wavelet generators satisfy
\[
		\Psi_k = \sum_{\gamma_1,\ldots,\gamma_d} a^{k_1}_{\gamma_1} \cdot\ldots\cdot a^{k_d}_{\gamma_d}\; \phi(2\darg - \gamma_1) \otimes\ldots\otimes \phi(2\darg - \gamma_d) = \sum_{\gamma} a_k(\gamma) \Phi(M\darg - \gamma).
\]
Since $\phi$ is a scaling function, the coefficients $a_0(\gamma)$ of the scaling function $\Phi$ satisfy the relation
\begin{align*}
		\sum_{\gamma} a_0(\gamma) &= 2^{d/2} \sum_{\gamma_1,\ldots,\gamma_d} h_{\gamma_1}\cdot\ldots\cdot h_{\gamma_d} = 2^{d/2} \left(\sum_{\gamma_1} h_{\gamma_1} \right)^d = 2^d.
\end{align*}
Furthermore, we have for $j,k \in \{0,1\}^d$ and $\gamma\in \Gamma$,
\[
		\sum_{\gamma'} a_j(\gamma') a_k(M\gamma+\gamma') = \left\{ \sum_{\gamma'_1} a^{j_1}_{\gamma'_1} a^{k_1}_{2 \gamma_1 + \gamma'_1}	\right\} \cdot\ldots\cdot \left\{ \sum_{\gamma'_d} a^{j_d}_{\gamma'_d} a^{k_d}_{2 \gamma_d + \gamma'_d}	\right\} = 2^d \delta_{j,k} \delta_{\gamma,0}.
\]
Indeed, for $s=1,\ldots,d$ and $z\coloneqq\gamma_s$
\[
		\sum_{\gamma'_s} a^{j_s}_{\gamma'_s} a^{k_s}_{2 \gamma_s + \gamma'_s} = \begin{cases}
			2 \sum_{l} h_l g_{2 z + l} & \quad \text{ if } j_s = 0 \text{ and } k_s = 1, \\
			2 \sum_{l} h_l h_{2 z + l} & \quad \text{ if } j_s = k_s = 0, \\
			2 \sum_{l} g_l h_{2 z + l} & \quad \text{ if } j_s = 1 \text{ and } k_s = 0, \\
			2 \sum_{l} g_l g_{2 z + l} & \quad \text{ if } j_s = k_s = 1. \\
		\end{cases}
\]
Since the $\phi(\darg - z)$ form an ONB of $U'_0$, we have
\[
		\delta_{z,0} = \int_{\R} \phi(x-z)\, \phi(x)\; \intd{x} =  \sum_{l,m} h_l h_m \delta_{2z+l,m} =  \sum_l h_l h_{2z+l}.
\]
In the same way,
\[
	\delta_{z,0} = \int_{\R} \psi(x-z) \psi(x)\, \intd{x} =  \sum_{l,m} g_l g_m \delta_{2z + l,m} =  \sum_l g_l g_{2z+l}.
\]
In addition, as $U'_1 = U'_0 \otimes W'_0$, we get
\[
		0 = \int_{\R} \psi( x - z) \, \phi(x)\; \intd{x} =  \sum_{l,m} g_l h_m \delta_{2z+l,m} =  \sum_l g_l h_{2z+l} =  \sum_l g_{l-2z} h_l,
\]
for all $z\in\Z$. Hence, the conditions of Theorem~\ref{BenedettoTheorem} (Theorem 1.7 in \cite{benedetto1993wavelets}) are satisfied and the family of functions $\{ |M|^{j/2} \Psi_k (M^j\darg - \gamma): \gamma \in \Gamma, k=1,\ldots,|M|-1 \}$ forms an ONB of $W_j$ and $L^2(\lambda^d) = \bigoplus_{j\in\Z} W_j$.


\begin{thebibliography}{51}
\providecommand{\natexlab}[1]{#1}
\providecommand{\url}[1]{\texttt{#1}}
\expandafter\ifx\csname urlstyle\endcsname\relax
  \providecommand{\doi}[1]{doi: #1}\else
  \providecommand{\doi}{doi: \begingroup \urlstyle{rm}\Url}\fi

\bibitem[Baraud et~al.(2001)Baraud, Comte, and Viennet]{baraud2001adaptive}
Y.~Baraud, F.~Comte, and G.~Viennet.
\newblock Adaptive estimation in autoregression or $\beta$-mixing regression
  via model selection.
\newblock \emph{The Annals of Statistics}, pages 839--875, 2001.

\bibitem[Barron(1994)]{barron1994approximation}
A.~R. Barron.
\newblock Approximation and estimation bounds for artificial neural networks.
\newblock \emph{Machine Learning}, 14\penalty0 (1):\penalty0 115--133, 1994.

\bibitem[Berbee(1979)]{berbee1979random}
H.~C. Berbee.
\newblock Random walks with stationary increments and renewal theory.
\newblock \emph{MC Tracts}, 1979.

\bibitem[Bradley(1989)]{bradley1989caution}
R.~C. Bradley.
\newblock A caution on mixing conditions for random fields.
\newblock \emph{Statistics \& Probability Letters}, 8\penalty0 (5):\penalty0
  489--491, 1989.

\bibitem[Bradley(2005)]{bradley2005basicMixing}
R.~C. Bradley.
\newblock Basic properties of strong mixing conditions. a survey and some open
  questions.
\newblock \emph{Probability surveys}, 2\penalty0 (2):\penalty0 107--144, 2005.

\bibitem[Brown et~al.(2010)Brown, Cai, and Zhou]{brown2010nonparametric}
L.~D. Brown, T.~T. Cai, and H.~H. Zhou.
\newblock Nonparametric regression in exponential families.
\newblock \emph{The Annals of Statistics}, 38\penalty0 (4):\penalty0
  2005--2046, 2010.

\bibitem[Cai(1999)]{cai1999adaptive}
T.~T. Cai.
\newblock Adaptive wavelet estimation: a block thresholding and oracle
  inequality approach.
\newblock \emph{The Annals of Statistics}, pages 898--924, 1999.

\bibitem[Carbon et~al.(1997)Carbon, Tran, and Wu]{carbon1997kernel}
M.~Carbon, L.~T. Tran, and B.~Wu.
\newblock Kernel density estimation for random fields (density estimation for
  random fields).
\newblock \emph{Statistics \& Probability Letters}, 36\penalty0 (2):\penalty0
  115--125, 1997.

\bibitem[Carbon et~al.(2007)Carbon, Francq, and Tran]{carbon2007kernel}
M.~Carbon, C.~Francq, and L.~T. Tran.
\newblock Kernel regression estimation for random fields.
\newblock \emph{Journal of Statistical Planning and Inference}, 137\penalty0
  (3):\penalty0 778--798, 2007.

\bibitem[Chen and Christensen(2013)]{chen2013optimal}
X.~Chen and T.~Christensen.
\newblock Optimal uniform convergence rates for sieve nonparametric
  instrumental variables regression.
\newblock \emph{Cowles Foundation Discussion Paper No. 1923}, 2013.

\bibitem[Cressie(1993)]{cressie1993statistics}
N.~Cressie.
\newblock \emph{Statistics for spatial data}.
\newblock Wiley series in probability and mathematical statistics: Applied
  probability and statistics. J. Wiley, 1993.

\bibitem[Daubechies(1992)]{daubechies1992ten}
I.~Daubechies.
\newblock \emph{Ten lectures on wavelets}.
\newblock SIAM, 1992.

\bibitem[Davydov(1968)]{davydov1968convergence}
Y.~A. Davydov.
\newblock Convergence of distributions generated by stationary stochastic
  processes.
\newblock \emph{Theory of Probability \& Its Applications}, 13\penalty0
  (4):\penalty0 691--696, 1968.

\bibitem[Davydov(1973)]{davydov1973mixing}
Y.~A. Davydov.
\newblock Mixing conditions for {M}arkov chains.
\newblock \emph{Teoriya Veroyatnostei i ee Primeneniya}, 18\penalty0
  (2):\penalty0 321--338, 1973.

\bibitem[Donoho and Johnstone(1998)]{donoho1998minimax}
D.~L. Donoho and I.~M. Johnstone.
\newblock Minimax estimation via wavelet shrinkage.
\newblock \emph{The Annals of Statistics}, 26\penalty0 (3):\penalty0 879--921,
  1998.

\bibitem[Donoho et~al.(1996)Donoho, Johnstone, Kerkyacharian, and
  Picard]{donoho1996density}
D.~L. Donoho, I.~M. Johnstone, G.~Kerkyacharian, and D.~Picard.
\newblock Density estimation by wavelet thresholding.
\newblock \emph{The Annals of Statistics}, pages 508--539, 1996.

\bibitem[Doukhan(1991)]{doukhan1991mixing}
P.~Doukhan.
\newblock \emph{Mixing: properties and examples}.
\newblock Universite Paris-sud, Departement de mathematique, 1991.

\bibitem[Fan and Gijbels(1996)]{fan1996local}
J.~Fan and I.~Gijbels.
\newblock \emph{Local polynomial modelling and its applications: monographs on
  statistics and applied probability 66}, volume~66.
\newblock CRC Press, 1996.

\bibitem[Franke and Diagne(2006)]{franke06}
J.~Franke and M.~Diagne.
\newblock Estimating market risk with neural networks.
\newblock \emph{Statistics and Decisions}, 24:\penalty0 1001--1021, 2006.

\bibitem[Grenander(1981)]{grenander1981abstract}
U.~Grenander.
\newblock \emph{Abstract inference}.
\newblock John Wiley \& Sons, 1981.

\bibitem[Guessoum and Sa{\"\i}d(2010)]{guessoum2010kernel}
Z.~Guessoum and E.~O. Sa{\"\i}d.
\newblock Kernel regression uniform rate estimation for censored data under
  $\alpha$-mixing condition.
\newblock \emph{Electronic Journal of Statistics}, 4:\penalty0 117--132, 2010.

\bibitem[Gy{\"o}rfi et~al.(2002)Gy{\"o}rfi, Kohler, Krzy{\.z}ak, and
  Walk]{gyorfi}
L.~Gy{\"o}rfi, M.~Kohler, A.~Krzy{\.z}ak, and H.~Walk.
\newblock \emph{A distribution-free theory of nonparametric regression}.
\newblock Springer Berlin, New York, Heidelberg, 2002.

\bibitem[Gy{\"o}rfi et~al.(2013)Gy{\"o}rfi, H{\"a}rdle, Sarda, and
  Vieu]{gyorfi2013nonparametric}
L.~Gy{\"o}rfi, W.~H{\"a}rdle, P.~Sarda, and P.~Vieu.
\newblock \emph{Nonparametric curve estimation from time series}, volume~60.
\newblock Springer, 2013.

\bibitem[Hall and Patil(1995)]{hall1995formulae}
P.~Hall and P.~Patil.
\newblock Formulae for mean integrated squared error of nonlinear wavelet-based
  density estimators.
\newblock \emph{The Annals of Statistics}, pages 905--928, 1995.

\bibitem[Hallin et~al.(2004)Hallin, Lu, and Tran]{hallin2004local}
M.~Hallin, Z.~Lu, and L.~T. Tran.
\newblock Local linear spatial regression.
\newblock \emph{The Annals of Statistics}, 32\penalty0 (6):\penalty0
  2469--2500, 2004.

\bibitem[H{\"a}rdle(1990)]{hardle1990applied}
W.~H{\"a}rdle.
\newblock Applied nonparametric regression.
\newblock \emph{Cambridge University Press}, 1990.

\bibitem[H{\"a}rdle et~al.(2012)H{\"a}rdle, Kerkyacharian, Picard, and
  Tsybakov]{hardle2012wavelets}
W.~H{\"a}rdle, G.~Kerkyacharian, D.~Picard, and A.~Tsybakov.
\newblock \emph{Wavelets, approximation, and statistical applications}.
\newblock Lecture Notes in Statistics. Springer New York, 2012.

\bibitem[Haussler(1992)]{haussler1992decision}
D.~Haussler.
\newblock Decision theoretic generalizations of the pac model for neural net
  and other learning applications.
\newblock \emph{Information and Computation}, 100\penalty0 (1):\penalty0
  78--150, 1992.

\bibitem[Hornik(1991)]{hornik1991approximation}
K.~Hornik.
\newblock Approximation capabilities of multilayer feedforward networks.
\newblock \emph{Neural networks}, 4\penalty0 (2):\penalty0 251--257, 1991.

\bibitem[Kaiser et~al.(2012)Kaiser, Lahiri, and Nordman]{kaiser2012}
M.~S. Kaiser, S.~N. Lahiri, and D.~J. Nordman.
\newblock Goodness of fit tests for a class of markov random field models.
\newblock \emph{The Annals of Statistics}, pages 104--130, 2012.

\bibitem[Kerkyacharian and Picard(2004)]{kerkyacharian2004regression}
G.~Kerkyacharian and D.~Picard.
\newblock Regression in random design and warped wavelets.
\newblock \emph{Bernoulli}, 10\penalty0 (6):\penalty0 1053--1105, 2004.

\bibitem[Kindermann and Snell(1980)]{kindermann1980markov}
R.~Kindermann and L.~Snell.
\newblock \emph{Markov random fields and their applications}.
\newblock The American Mathematical Society, 1980.

\bibitem[Kirch and Tadjuidje~Kamgaing(2014)]{kirch2014uniform}
C.~Kirch and J.~Tadjuidje~Kamgaing.
\newblock A uniform central limit theorem for neural network-based
  autoregressive processes with applications to change-point analysis.
\newblock \emph{Statistics}, 48\penalty0 (6):\penalty0 1187--1201, 2014.

\bibitem[Kohler(2003)]{kohler2003nonlinear}
M.~Kohler.
\newblock Nonlinear orthogonal series estimates for random design regression.
\newblock \emph{Journal of Statistical Planning and Inference}, 115\penalty0
  (2):\penalty0 491--520, 2003.

\bibitem[Kolmogorov and Rozanov(1960)]{kolmogorov1960strong}
A.~N. Kolmogorov and Y.~A. Rozanov.
\newblock On strong mixing conditions for stationary gaussian processes.
\newblock \emph{Theory of Probability \& Its Applications}, 5\penalty0
  (2):\penalty0 204--208, 1960.

\bibitem[Koul(1977)]{koul1977behavior}
H.~L. Koul.
\newblock Behavior of robust estimators in the regression model with dependent
  errors.
\newblock \emph{The Annals of Statistics}, pages 681--699, 1977.

\bibitem[Krebs(2018{\natexlab{a}})]{krebs2017orthogonal}
J.~T.~N. Krebs.
\newblock Orthogonal series estimates on strong spatial mixing data.
\newblock \emph{Journal of Statistical Planning and Inference}, 193:\penalty0
  15--41, 2018{\natexlab{a}}.

\bibitem[Krebs(2018{\natexlab{b}})]{krebs2018large}
J.~T.~N. Krebs.
\newblock A large deviation inequality for $\beta$-mixing time series and its
  applications to the functional kernel regression model.
\newblock \emph{Statistics \& Probability Letters}, 133:\penalty0 50--58,
  2018{\natexlab{b}}.

\bibitem[Kulik and Raimondo(2009)]{kulik2009wavelet}
R.~Kulik and M.~Raimondo.
\newblock Wavelet regression in random design with heteroscedastic dependent
  errors.
\newblock \emph{The Annals of Statistics}, 37\penalty0 (6A):\penalty0
  3396--3430, 2009.

\bibitem[Li(2016)]{li2016nonparametric}
L.~Li.
\newblock Nonparametric regression on random fields with random design using
  wavelet method.
\newblock \emph{Statistical Inference for Stochastic Processes}, 19\penalty0
  (1):\penalty0 51--69, 2016.

\bibitem[Li and Xiao(2017)]{li2016wavelet}
L.~Li and Y.~Xiao.
\newblock Wavelet-based estimation of regression function with strong mixing
  errors under fixed design.
\newblock \emph{Communications in Statistics-Theory and Methods}, 46\penalty0
  (10):\penalty0 4824--4842, 2017.

\bibitem[McCaffrey and Gallant(1994)]{mccaffrey1994convergence}
D.~F. McCaffrey and A.~R. Gallant.
\newblock Convergence rates for single hidden layer feedforward networks.
\newblock \emph{Neural Networks}, 7\penalty0 (1):\penalty0 147--158, 1994.

\bibitem[Merlev{\`e}de et~al.(2009)Merlev{\`e}de, Peligrad, Rio,
  et~al.]{merlevede2009bernstein}
F.~Merlev{\`e}de, M.~Peligrad, E.~Rio, et~al.
\newblock Bernstein inequality and moderate deviations under strong mixing
  conditions.
\newblock In \emph{High dimensional probability V: the Luminy volume}, pages
  273--292. Institute of Mathematical Statistics, 2009.

\bibitem[Meyer(1995)]{meyer1995wavelets}
Y.~Meyer.
\newblock \emph{Wavelets and operators}, volume~1.
\newblock Cambridge university press, 1995.

\bibitem[Rosenblatt(1956)]{rosenblatt1956central}
M.~Rosenblatt.
\newblock A central limit theorem and a strong mixing condition.
\newblock \emph{Proceedings of the National Academy of Sciences}, 42\penalty0
  (1):\penalty0 43--47, 1956.

\bibitem[Roussas and Tran(1992)]{roussas1992asymptotic}
G.~G. Roussas and L.~T. Tran.
\newblock Asymptotic normality of the recursive kernel regression estimate
  under dependence conditions.
\newblock \emph{The Annals of Statistics}, pages 98--120, 1992.

\bibitem[Stone(1982)]{stone1982optimal}
C.~J. Stone.
\newblock Optimal global rates of convergence for nonparametric regression.
\newblock \emph{The Annals of Statistics}, pages 1040--1053, 1982.

\bibitem[Strichartz(1993)]{benedetto1993wavelets}
R.~S. Strichartz.
\newblock Construction of orthonormal wavelets.
\newblock In J.~J. Benedetto, editor, \emph{Wavelets: mathematics and
  applications}, pages 23--50. CRC press, 1993.

\bibitem[Tran(1990)]{tran1990kernel}
L.~T. Tran.
\newblock Kernel density estimation on random fields.
\newblock \emph{Journal of Multivariate Analysis}, 34\penalty0 (1):\penalty0
  37--53, 1990.

\bibitem[Withers(1981)]{Withers1981}
C.~S. Withers.
\newblock Conditions for linear processes to be strong-mixing.
\newblock \emph{Zeitschrift f{\"u}r Wahrscheinlichkeitstheorie und Verwandte
  Gebiete}, 57\penalty0 (4):\penalty0 477--480, 1981.

\bibitem[Yahia and Benatia(2012)]{yahia2012nonlinear}
D.~Yahia and F.~Benatia.
\newblock Nonlinear wavelet regression function estimator for censored
  dependent data.
\newblock \emph{Afrika Statistika}, 7\penalty0 (1):\penalty0 391--411, 2012.

\end{thebibliography}
\end{document}